\documentclass[11pt]{article}\textwidth 160mm\textheight 235mm
\oddsidemargin-2mm\evensidemargin-2mm\topmargin-10mm
\usepackage{amsfonts}
\usepackage{amssymb}
\usepackage{graphicx}
\usepackage{amsmath}
\usepackage{amsthm}
\usepackage{enumitem}
\usepackage{tikz}
\usepackage{cancel}
\usepackage{todonotes}
\usetikzlibrary{matrix}
\usetikzlibrary{arrows,shapes}
\usepackage{cite}
\usepackage{hyperref}
\hypersetup{colorlinks=true, urlcolor=blue}
\expandafter\let\expandafter\oldproof\csname\string\proof\endcsname
\let\oldendproof\endproof
\renewenvironment{proof}[1][\proofname]{%
  \oldproof[\ttfamily \scshape \bf #1. ]%
}{\oldendproof}

\def\S{{\mathbb{S}}}

\def\B{\mathbb{B}}
\def\R{{\rm I\!R}}
\def\N{{\rm I\!N}}
\def\ox{\bar{x}}
\def\oy{\bar{y}}
\def\oz{\bar{z}}
\def\ov{\bar{v}}
\def\ow{\bar{w}}

\def\ss{\scriptsize }


\def\ve{\varepsilon}

\def\X{{\mathbb X}}
\def\Y{{\mathbb Y}}

\def\emp{\emptyset}

\def\Lm{{\Lambda}}
\def\tto{\rightrightarrows}

\def\d{{\rm d}}
\def\sub{\partial}

\def\Hat{\widehat}

\def\Bar{\overline}
\def\ra{\rangle}
\def\la{\langle}
\def\ve{\varepsilon}

\def\eig {\mbox{\rm eig}\,}
\def\tr{\mbox{\rm tr}\,}

\def\gph{\mbox{\rm gph}\,}
\def\epi{\mbox{\rm epi}\,}

\def\dom{\mbox{\rm dom}\,}

\def\ker{\mbox{\rm ker}\,}

\def\dn{\downarrow}

\def\ph{\varphi}

\def\emp{\emptyset}
\def\st{\stackrel}
\def\oR{\Bar{\R}}
\def\lm{\lambda}

\def\dd{\delta}
\def\al{\alpha}

\def\sm{\hbox{${1\over 2}$}}
\def\rsm{\hbox{${r\over 2}$}}
\def\lsm{\hbox{${\ell\over 2}$}}

\def\sce{\setcounter{equation}{0}}


\def\verl{ \;\rule[-0.4mm]{0.2mm}{0.27cm}\;}
\def\verll{ \;\rule[-0.7mm]{0.2mm}{0.37cm}\;}
\def\verlm{ \;\rule[-0.5mm]{0.2mm}{0.33cm}\;}

\begin{document}
\vspace*{0.5in}
\begin{center}
{\bf TWICE EPI-DIFFERENTIABILITY OF EXTENDED-REAL-VALUED FUNCTIONS WITH APPLICATIONS IN  COMPOSITE OPTIMIZATION  }\\[1 ex]
ASHKAN MOHAMMADI\footnote{Department of Mathematics, Wayne State University, Detroit, MI 48202, USA (ashkan.mohammadi@wayne.edu).}
 and M. EBRAHIM SARABI\footnote{Department of Mathematics, Miami University, Oxford, OH 45065, USA (sarabim@miamioh.edu).}
\end{center}
\vspace*{0.05in}
\small{\bf Abstract.}  The paper is devoted to the study of the twice epi-differentiablity of extended-real-valued functions, with an emphasis on
functions satisfying  a certain composite  representation. This will be conducted under  parabolic regularity, a second-order regularity condition that was
recently utilized in \cite{mms2} for second-order variational analysis of constraint systems.  Besides justifying the twice epi-differentiablity of 
composite functions, we obtain precise formulas for their  second subderivatives under the metric subregularity constraint qualification.  
The latter allows us to derive  second-order optimality conditions for a large class of composite optimization problems.   \\[1ex]
{\bf Key words.} Variational analysis, twice epi-differentiability, parabolic regularity, composite optimization, second-order optimality conditions\\[1ex]
{\bf  Mathematics Subject Classification (2000)}  49J53, 49J52, 90C31

\newtheorem{Theorem}{Theorem}[section]
\newtheorem{Proposition}[Theorem]{Proposition}
\newtheorem{Remark}[Theorem]{Remark}
\newtheorem{Lemma}[Theorem]{Lemma}
\newtheorem{Corollary}[Theorem]{Corollary}
\newtheorem{Definition}[Theorem]{Definition}
\newtheorem{Example}[Theorem]{Example}
\newtheorem{Algorithm}[Theorem]{Algorithm}
\renewcommand{\theequation}{{\thesection}.\arabic{equation}}
\renewcommand{\thefootnote}{\fnsymbol{footnote}}

\normalsize

\section{Introduction}\sce
This paper aims to provide a systematic study of the  twice epi-differentiability of extend-real-valued functions in finite dimensional spaces. 
In particular, we pay special attention  to  the composite optimization problem
\begin{equation}\label{comp}
\mbox{minimize} \;\; \ph(x)+ g(F(x))\quad \mbox{over all}\;\; x\in \X,
\end{equation}
where $\ph:\X\to \R$ and $F:\X\to \Y$ are   twice differentiable   and  $g:\Y\to \oR:=(-\infty, +\infty]$ is 
a lower semicontinuous (l.s.c.)\ convex function and where $\X$ and $\Y$ are two finite dimensional spaces, 
and verify the  twice epi-differentiability of the objective function in \eqref{comp} under verifiable assumptions.
The composite optimization problem \eqref{comp} encompasses major classes of   constrained and  composite optimization problems including classical nonlinear programming problems,
second-order cone and semidefinite programming problems, eigenvalue optimizations problems \cite{t2}, and fully amenable composite optimization problems \cite{r89}, 
see Example~\ref{ex04} for more detail. Consequently, the composite  problem \eqref{comp} provides  
 a unified framework to study second-order variational properties, including the twice epi-differentiability and second-order optimality 
 conditions,  of  the aforementioned  optimization problems. As argued below, the twice epi-differentiability 
carries vital second-order information for extend-real-valued functions and therefore plays an important role 
 in modern second-order variational analysis. 
  
 A lack of an appropriate second-order generalized derivative for nonconvex  extended-real-valued functions was the main driving force for Rockafellar
 to introduce in \cite{r85} the concept of the twice epi-differentiability for such functions. Later, in his landmark paper \cite{r89}, Rockafellar justified this property for an important 
 class of functions, called {\em fully amenable}, that includes  nonlinear programming problems but does not go far enough to cover other major classes of constrained and composite optimization  problems. 
Rockafellar's results were extended in \cite{c91,i91} for composite functions appearing in \eqref{comp}. However, these extensions 
were achieved under a restrictive assumption on the second subderivative, which does not hold for constrained optimization problems.
Nor does this condition hold for other major composite functions related to  eigenvalue optimization problems; see \cite[Theorem~1.2]{t2} for more detail.  Levy in \cite{l01} obtained upper and lower
 estimates for the second subderivative of the composite function  from \eqref{comp}, but fell short of establishing the twice epi-differentiability 
 for this framework. 

The authors and Mordukhovich observed recently in \cite{mms2} that a second-order regularity, called {\em parabolic regularity} (see Definition~\ref{pre}), 
can play a major role toward the establishment  of the  twice epi-differentiability  for constraint systems, namely when the outer function $g$ in \eqref{comp}
is the indicator function of a closed convex set. This vastly alleviated the difficulty that was often appeared  in the justification  of the  twice epi-differentiability for the latter framework and opened the door
for crucial applications of this concept in theoretical and numerical aspects of optimization. Among these applications, we can list the following:
 \begin{itemize}[noitemsep,topsep=0pt]
\item the calculation of proto-derivatives of subgradient mappings via  the connection between the second subderivative of a function and the proto-derivative of its subgradient mapping (see equation \eqref{gdpd});
\item the calculation of the second subderivative of the augmented Lagrangian function associated with the composite problem \eqref{comp}, which 
allows us to characterize the second-order growth condition for the augmented Lagrangian problem (cf.  \cite[Theorems~8.3 \& 8.4]{mms2});
 \item the validity of the derivative-coderivative inclusion (cf. \cite[Theorem~13.57]{rw}), which has important consequences in parametric optimization; see  \cite[Theorem~5.6]{mms3} for a recent application in the 
 convergence analysis of the sequential quadratic programming (SQP) method for constrained optimization problems.
 \end{itemize}

In this paper, we continue the path, initiated in \cite{mms2} for constraint systems, and show that the twice epi-differentiability 
of the objective function in \eqref{comp} can be guaranteed under parabolic regularity. To achieve this goal, we demand that  
 the outer function $g$ from \eqref{comp} be locally Lipschitz continuous relative to its domain; see the next section for the precise definition 
 of this concept.  Shapiro in \cite{sh03} used a similar condition but in addition assumed that this function is finite-valued. 
 The latter does bring certain restrictions for \eqref{comp} by excluding constrained problems as well as piecewise linear-quadratic 
 composite problems. 
 As shown in Example~\ref{ex04}, major classes of constrained and composite optimization problems satisfy this Lipschitzian condition.
 However, some composite problems such as the spectral abcissa minimization (cf. \cite{blo}), namely the problem of minimizing the largest real parts of  eigenvalues,  can not be covered by \eqref{comp}.
 
 The rest of the paper is organized as follows. Section~\ref{sect02} recalls important notions of variational analysis that are used throughout  this paper. 
Section~\ref{sect03} begins with the definition of parabolic regularity of extended-real-valued functions.
Then we justify that
parabolic regularity amounts to a certain  duality relationship between the second subderivative and parabolic subderivative. 
Employing this, we  show that the twice epi-differentiability of extended-real-valued functions can be guaranteed if they are  parabolically regular and 
parabolic epi-differentiable. Section~\ref{sect04} is devoted to  important second-order variational properties of 
parabolic subderivatives. In particular, we establish a chain rule for parabolic subderivatives of   composite functions  in \eqref{comp} under the metric
subregularity constraint qualification.  In Section~\ref{sect05}, we establish  chain rules for the parabolic regularity and for 
the second subderivative of composite functions, and consequently establish their twice epi-differentiability.  Section~\ref{sect06} deals with  important applications of our results in  second-order
optimality conditions for the composite optimization problem \eqref{comp}. We close the paper by achieving a characterization of 
the strong metric subregularity of the subgradient mapping of the objective function in \eqref{comp} via the second-order sufficient condition for this problem.

In what follows,   $\X$ and $\Y$ are  finite-dimensional  Hilbert spaces equipped
with a scalar product $\la\cdot,\cdot\ra$ and its induced norm $\|\cdot\|$.
 By $\B$ we 
 denote the closed unit ball in the space in question and by $\B_r(x):=x+r\B$ the closed ball centered at $x$ with radius $r>0$. 
 For any set $C$ in $\X$, its indicator function is defined by $\dd_C(x)=0$ for $x\in C$ and $\dd_C(x)=\infty$ otherwise. We denote by $d(x,C)$  the distance between $x\in \X$ and a set $C$.
 For $v\in \X$, the subspace $\{w\in \X|\, \la w,v\ra=0\}$ is denoted by $\{v\}^\bot$. 
We write $x(t)=o(t)$ with $x(t)\in \X$ and $t>0$ to mean that ${\|x(t)\|}/{t}$ goes to $0$ as $t\dn 0$.
Finally, we denote by $\R_+$ (respectively,  $\R_-$) the set of non-negative (respectively, non-positive) real numbers.
\vspace{-0.1 in}

\section{ Preliminary Definitions in Variational Analysis}\sce \label{sect02}

In this section we first briefly review basic constructions of variational analysis and generalized differentiation employed in the paper;
see \cite{mor18,rw} for more detail.
 A family of sets $C_t$  in $\X$ for $t>0$ converges to a set $C\subset \X$ if $C$ is closed and 
$$
\lim_{t\dn 0}d(w,C_t)=d(w,C)\quad \mbox{for all}\;\;w\in \X.
$$
Given a nonempty set $C\subset\X$ with $\ox\in C$, the  tangent cone $T_ C(\ox)$ to $C$ at $\ox$ is defined by
\begin{equation*}\label{2.5}
T_C(\ox)=\big\{w\in\X|\;\exists\,t_k{\dn}0,\;\;w_k\to w\;\;\mbox{ as }\;k\to\infty\;\;\mbox{with}\;\;\ox+t_kw_k\in C\big\}.
\end{equation*}
We say a tangent vector $w\in T_C(\ox)$ is {\em derivable} if there exist a constant  $\ve>0$ and an arc $\xi:[0,\ve]\to C$ such that  $\xi(0)=\ox$ and $\xi'_+(0)=w$, where $\xi'_+$ signifies the right derivative of $\xi$
at $0$, defined by 
$$
\xi'_+(0):=\lim_{t\dn 0}\frac{\xi(t)-\xi(0)}{t}.
$$
The set $C$ is called geometrically derivable at $\ox$ if every tangent vector $w$ to $C$ at $\ox$ is derivable. The geometric derivability of $C$ at $\ox$ can be equivalently described by the sets $[C-\ox]/{t}$ 
converging to $T_C(\ox)$  as $t\dn 0$. Convex sets are 
important examples of geometrically derivable sets. The second-order tangent set to $C$ at $\ox$ for a tangent vector $w\in T_C(\ox)$ is given by
\begin{equation*}\label{2tan}
T_C^2(\ox, w)=\big\{u\in\X|\;\exists\,t_k{\downarrow}0,\;\;u_k\to u\;\;\mbox{ as }\;k\to\infty\;\;\mbox{with}\;\;\ox+t_kw+\frac{1}{2} t_k^2 u_k\in C\big\}.
\end{equation*}
 A set $C$ is said to be {parabolically derivable} at $\ox$ for $w$ if $T_C^2(\ox, w)$ is nonempty and for each $u\in T_C^2(\ox, w)$ there are $\ve>0$ and an are $\xi:[0,\ve]\to C$ with $\xi(0)=\ox$, 
$\xi'_+(0)=w$, and $\xi''_+(0)=u$, where 
$$
\xi''_+(0):=\lim_{t\dn 0}\frac{\xi(t)-\xi(0)-t\xi'_+(0)}{\frac{1}{2}t^2}.
$$
It is well known that if $C\subset \X$ is convex and parabolically  derivable at $\ox$ for  $w$, then  the second-order tangent set $T_C^2(\ox, w)$
is a nonempty convex set in $\X$ (cf. \cite[page~163]{bs}).
Given the function $f:\X \to \oR:= (-\infty, \infty]$, its domain and epigraph are defined, respectively, by 
$$
\dom f =\big\{ x \in \X|\; f(x) < \infty \big \}\quad \mbox{and}\quad \epi f=\big \{(x,\al)\in \X\times \R|\, f(x)\le \al\big \}.
$$
 The regular subdifferential  of $f$  at $\ox\in \dom f$ is defined by 
\begin{equation*}
\Hat \sub f(\ox)=\big\{v\in \X \;|\; \liminf_{x\to \ox} \frac{f(x)- f(\ox)-\la v,x-\ox\ra}{\|x-\ox\|}\ge 0\big\}.
\end{equation*}
The  subdifferential  of $f$ at $\ox$ is given by 
\begin{eqnarray*}\label{2.4}
\sub f(\ox)=\big\{v\in \X\; | \;\exists\,x_k\st{f}{\to}\ox,\;\;v_k\to v\;\;\mbox{with}\;\;v_k\in \Hat \sub f(x_k)\big \},
\end{eqnarray*}
where $x_k\st{f}{\to}\ox$ stands for $x_k\to \ox$ and $f(x_k)\to f(\ox)$.
We say that  $v \in \X$ is  a proximal subgradient of $f$ at $\ox$  if there exists $r \in \R_+$ and   a neighborhood $U$ of $\ox$ such that for all $x \in U$ we have 
\begin{equation}\label{proxs}
f(x) \geq f(\ox) + \la v , x - \ox \ra - \frac{r}{2} \|x - \ox \|^2. 
\end{equation}
The set of all such $v$ is called the proximal subdifferential of $f$ at $\ox$ and is denoted by $\sub ^p f(\ox)$. 
By definitions, it is not hard to obtain  the inclusions $\sub^p f(\ox)\subset \Hat \sub f(\ox) \subset \sub f(\ox)$. 
Given a nonempty set $C\subset \X$, the proximal and regular normal cones to $C$ at $\ox\in C$ are defined, respectively, by 
$$
N^p_C(\ox):=\sub ^p \dd_C(\ox)\quad \mbox{and}\quad \Hat N_C(\ox):=\Hat\sub  \dd_C(\ox).
$$
Similarly, we define the (limiting/Mordukhovich) normal cone of $C$ at $\ox$ by $N_C(\ox):=\sub  \dd_C(\ox)$.
Consider a set-valued mapping $S:\X\tto\Y $ with its domain and graph  defined, respectively, by 
$$ \dom S=\big\{x\in\X |\;S(x)\ne\emp\big\}\quad \mbox{and}\quad \gph S=\big\{(x,y)\in\X\times\Y |\;y\in S(x)\big\}.$$
 The {graphical derivative} of $S$ at $(\ox,\oy)\in \gph S$ is defined by 
\begin{equation*}\label{gder}
DS(\ox , \oy)(w)=\big\{v\in\Y|\;(w,v)\in T_{\scriptsize{\gph S}}(\ox,\oy)\big\},\quad w\in\X.
\end{equation*}
Recall that a set-valued mapping $S\colon\X\tto\Y$ is {metrically regular} around  $(\ox,\oy)\in\gph S$ if there are constants  $\kappa \in \R_+$  and $\ve>0 $ such that the distance estimate
\begin{equation*}\label{metreq}
d\big(x,S^{-1}(y)\big)\le \kappa  \,d\big(y, S(x)\big)\;\;\mbox{for all}\quad (x,y)\in \B_\ve(\ox,\oy)
\end{equation*}
holds. When  $y=\oy$ in the above estimate,  the mapping $S$ is called metrically subregular at $(\ox,\oy)$.
The  set-valued mapping $S$ is called strongly metrically subregular at $(\ox,\oy)$ if there are
 a constant $\kappa\in \R_+$ and a neighborhood $U$ of $\ox$ such that the estimate 
 $$
 \|x-\ox\|\le \kappa\, d(\oy, S(x))\;\; \mbox{for all}\;\;x\in U
 $$
holds. It is known (cf. \cite[Theorem~4E.1]{dr}) that the set-valued mapping $S$ is strongly metrically subregular at $(\ox,\oy)$ if and only if 
we have 
\begin{equation}\label{sms8}
0\in DS(\ox, \oy)(w)\implies w=0.
\end{equation}

Given  a function  $f:\X \to \oR$ and  a point $\ox$ with $f(\ox)$ finite, the subderivative function $\d f(\ox)\colon\R^n\to[-\infty,\infty]$ is defined by
$$
{\mathrm d}f(\ox)(\ow)=\liminf_{\substack{
   t\dn 0 \\
  w\to \ow
  }} {\frac{f(\ox+tw)-f(\ox)}{t}}.
$$
Define the parametric  family of 
second-order difference quotients for $f$ at $\ox$ for $\ov\in \X$ by 
\begin{equation*}\label{lk01}
\Delta_t^2 f(\bar x , \ov)(w)=\dfrac{f(\ox+tw)-f(\ox)-t\langle \ov,\,w\rangle}{\frac{1}{2}t^2}\quad\quad\mbox{with}\;\;w\in \X, \;\;t>0.
\end{equation*}
If $f(\ox)$ is finite, then the {second subderivative} of $f$ at $\ox$ for $\ov$   is given by 
\begin{equation*}\label{ssd}
\d^2 f(\bar x , \ov)(w)= \liminf_{\substack{
   t\dn 0 \\
  w'\to w
  }} \Delta_t^2 f(\ox , \ov)(w'),\;\; w\in \X.
\end{equation*}

 Below, we collect  some important properties of the second subderivative  that are 
used throughout this paper. Parts (i) and (ii) were taken from \cite[Proposition~13.5]{rw} and part (iii) was recently observed in \cite[Theorem~4.1(i)]{mms1}.
\begin{Proposition}[properties of second subderivative]\label{ssp} Let $f:\X \to \oR$ and  $(\ox,\ov)\in \X\times \X$ with $f(\ox)$ finite. Then the following conditions hold:
 \begin{itemize}[noitemsep,topsep=0pt]
\item [\bf{(i)}]  the second subderivative $\d^2 f(\ox,\ov)$ is a lower semicontinuous {\rm(}l.s.c.{\rm)}\ function;
\item [\bf{(ii)}] if $\d^2 f(\bar x ,\ov)$ is a proper function, meaning that $\d^2 f(\bar x, \ov)(w)>-\infty$ for all $w\in \X$ and its effective domain, defined by 
 $$
 \dom \d^2 f(\bar x, \ov)=\big\{w\in \X |\, \d^2 f(\bar x, \ov)(w)<\infty\big\},
 $$
 is nonempty, then we always have the inclusion 
 $$
  \dom \d^2 f(\bar x, \ov)\subset \big\{w\in \X |\, {\d}f(\ox)(w)=\la \ov,w\ra\big\};
 $$
 \item [\bf{(iii)}] if $\ov\in \sub^p f(\ox)$, then for any $w\in \X$ we have $\d^2 f(\ox , \ov)(w)\ge -r\|w\|^2$, where 
 $r\in \R_+$ is taken from \eqref{proxs}. In particular, $\d^2 f(\ox , \ov)$ is a proper function.

 \end{itemize}
\end{Proposition}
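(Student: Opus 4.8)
The plan is to obtain all three assertions directly from the definitions of the first and second subderivatives; beyond unwinding the defining lower limits no external machinery is needed (these facts are of course also recorded in \cite[Proposition~13.5]{rw} and \cite[Theorem~4.1(i)]{mms1}).

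For (i), I would invoke the general principle that any function of $w$ defined as $\liminf_{t\dn 0,\,w'\to w}(\cdot)$ is automatically lower semicontinuous. Spelled out: given $w_k\to w$, for each $k$ choose a pair $(t_k,w'_k)$ with $t_k\le 1/k$, $\|w'_k-w_k\|\le 1/k$, and $\Delta^2_{t_k}f(\ox,\ov)(w'_k)\le \d^2 f(\ox,\ov)(w_k)+1/k$ (with the obvious reading when $\d^2 f(\ox,\ov)(w_k)\in\{\pm\infty\}$). Since then $t_k\dn 0$ and $w'_k\to w$, the defining liminf of $\d^2 f(\ox,\ov)$ at $w$ gives $\d^2 f(\ox,\ov)(w)\le\liminf_k\Delta^2_{t_k}f(\ox,\ov)(w'_k)\le\liminf_k\d^2 f(\ox,\ov)(w_k)$, which is precisely lower semicontinuity at $w$.

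For (ii), the workhorse is the elementary identity $\frac{f(\ox+tw')-f(\ox)}{t}=\la\ov,w'\ra+\tfrac12 t\,\Delta^2_t f(\ox,\ov)(w')$. Given $w\in\dom\d^2 f(\ox,\ov)$, properness makes $\alpha:=\d^2 f(\ox,\ov)(w)$ a real number, so there are $t_k\dn 0$, $w'_k\to w$ with $\Delta^2_{t_k}f(\ox,\ov)(w'_k)\to\alpha$; plugging these into the identity yields $\frac{f(\ox+t_kw'_k)-f(\ox)}{t_k}\to\la\ov,w\ra$, hence $\d f(\ox)(w)\le\la\ov,w\ra$. Conversely, I would show $\d f(\ox)(w)\ge\la\ov,w\ra$ for \emph{every} $w$ by contradiction: if the quotient $\frac{f(\ox+t_kw'_k)-f(\ox)}{t_k}$ stays below some $\beta<\la\ov,w\ra$ along suitable $t_k\dn 0$, $w'_k\to w$, then rearranging the same identity gives $\Delta^2_{t_k}f(\ox,\ov)(w'_k)=\frac{2}{t_k}\big(\frac{f(\ox+t_kw'_k)-f(\ox)}{t_k}-\la\ov,w'_k\ra\big)\to-\infty$, contradicting $\d^2 f(\ox,\ov)(w)>-\infty$. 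Combining the two inequalities over $\dom\d^2 f(\ox,\ov)$ gives the asserted inclusion.

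For (iii), let $\ve>0$ be such that $\B_\ve(\ox)\subset U$ in \eqref{proxs}. For any $w'\in\X$ and all $t>0$ small enough that $\ox+tw'\in\B_\ve(\ox)$, inequality \eqref{proxs} evaluated at $x=\ox+tw'$ reads $f(\ox+tw')-f(\ox)-t\la\ov,w'\ra\ge-\tfrac r2 t^2\|w'\|^2$, i.e., $\Delta^2_t f(\ox,\ov)(w')\ge-r\|w'\|^2$; passing to the liminf as $t\dn 0$ and $w'\to w$ gives $\d^2 f(\ox,\ov)(w)\ge-r\|w\|^2$ for all $w$, so $\d^2 f(\ox,\ov)$ never takes the value $-\infty$. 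Since moreover $\Delta^2_t f(\ox,\ov)(0)=0$ for every $t>0$ forces $\d^2 f(\ox,\ov)(0)\le 0$, we get $\d^2 f(\ox,\ov)(0)=0$, hence $0\in\dom\d^2 f(\ox,\ov)$ and the function is proper. The only genuinely delicate point in the whole argument is the reverse inequality in (ii): it really does use properness, since without it $\d f(\ox)(w)$ may drop strictly below $\la\ov,w\ra$ exactly where $\d^2 f(\ox,\ov)(w)=-\infty$; everything else is routine bookkeeping with the second-order difference quotients.
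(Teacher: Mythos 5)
Your proof is correct. The paper does not actually prove Proposition~\ref{ssp} --- it simply cites \cite[Proposition~13.5]{rw} for parts (i)--(ii) and \cite[Theorem~4.1(i)]{mms1} for part (iii) --- and your self-contained verification supplies exactly the standard arguments behind those citations (the automatic lower semicontinuity of a $\liminf$-type construction, the rearranged difference-quotient identity for (ii), and the direct estimate of $\Delta^2_t f(\ox,\ov)$ from \eqref{proxs} together with $\d^2 f(\ox,\ov)(0)=0$ for (iii)), including the correct observation that properness is genuinely what rules out $\d f(\ox)(w)<\la\ov,w\ra$ in the reverse inequality of (ii).
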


Following \cite[Definition~13.6]{rw}, a function $f:\X \to \oR$ is said to be {twice epi-differentiable} at $\bar x$ for $\ov\in\X$, with $f(\ox) $ finite, 
if the sets $\epi \Delta_t^2 f(\bar x , \ov)$ converge to $\epi \d^2 f(\bar x,\ov)$ as $t\downarrow 0$. The latter means by  
\cite[Proposition~7.2]{rw} that for every sequence $t_k\downarrow 0$ and every $w\in\X$, there exists a sequence $w_k \to w$ such that
\begin{equation}\label{dedf}
\d^2 f(\bar x,\ov)(w) = \lim_{k \to \infty} \Delta_{t_k}^2 f(\ox , \ov)(w_k).
\end{equation}

We say that a  function $f: \X \to \oR$ is Lipschitz continuous around $\ox $ {\em relative} to $C \subset \dom f$
with constant $\ell \in \R_+ $
if  $\ox \in C$ and there exists  a neighborhood $U$ of $\ox$ such that 
\begin{equation*} \label{lipwrtdomain}
 |f(x)  - f(y ) | \leq  \ell\, \| x - y \|  \quad \mbox{for all }\;       x , y \in U \cap C.    
\end{equation*}
Such a function is called {\em locally} Lipschitz continuous relative to $C$ if for every $\ox\in C$, it is Lipschitz continuous around $\ox $ relative to $C$. 
Piecewise linear-quadratic functions (not necessarily convex) and an indicator function of a nonempty set are important examples of 
functions that are locally Lipschitz continuous relative to their {domains}. 
\begin{Proposition}[domain of subderivatives] \label{dfs} Let $f: \X \to \oR$ be Lipschitz continuous around $\ox $ relative to its domain with constant $\ell\in \R_+$. Then we have $\dom \d f(\ox) = T_{\ss \dom f} (\ox)$.
In particular, for every $w\in T_{\ss \dom f} (\ox)$, the subderivative $\d f(\ox)(w)$ is finite.
\end{Proposition}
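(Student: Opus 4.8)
The plan is to establish the two inclusions in $\dom\d f(\ox)=T_{\dom f}(\ox)$ separately, and to read off the finiteness assertion from the quantitative bounds produced along the way. Throughout, observe that $\ox\in\dom f$ because $f(\ox)$ is finite, and fix a neighborhood $U$ of $\ox$ on which $|f(x)-f(y)|\le\ell\|x-y\|$ for all $x,y\in U\cap\dom f$.

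The inclusion $\dom\d f(\ox)\subseteq T_{\dom f}(\ox)$ is valid for an arbitrary function and requires no Lipschitz hypothesis: if $\ow\in\dom\d f(\ox)$, then by the definition of the lower limit defining $\d f(\ox)(\ow)$ there exist $t_k\dn0$ and $w_k\to\ow$ along which $[f(\ox+t_kw_k)-f(\ox)]/t_k$ converges to $\d f(\ox)(\ow)$; since this limit is not $+\infty$, the values $f(\ox+t_kw_k)$ are finite for all large $k$, i.e.\ $\ox+t_kw_k\in\dom f$, which is exactly what it means for $\ow$ to belong to $T_{\dom f}(\ox)$.

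For the reverse inclusion, fix $\ow\in T_{\dom f}(\ox)$ and pick $t_k\dn0$, $w_k\to\ow$ with $\ox+t_kw_k\in\dom f$. For large $k$ both $\ox+t_kw_k$ and $\ox$ lie in $U\cap\dom f$, so the relative Lipschitz estimate gives $|f(\ox+t_kw_k)-f(\ox)|\le\ell t_k\|w_k\|$; dividing by $t_k$ and passing to the lower limit yields $\d f(\ox)(\ow)\le\liminf_k[f(\ox+t_kw_k)-f(\ox)]/t_k\le\ell\|\ow\|$, so $\ow\in\dom\d f(\ox)$. To see that $\d f(\ox)(\ow)$ is genuinely finite (not $-\infty$), I would run the same estimate over \emph{all} admissible pairs $(t,w)$ with $t$ small and $w$ near $\ow$: when $\ox+tw\in\dom f$ the difference quotient is at least $-\ell\|w\|$, and when $\ox+tw\notin\dom f$ it equals $+\infty$, so in either case it exceeds $-\ell\|w\|$, whence $\d f(\ox)(\ow)\ge-\ell\|\ow\|$. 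Combining the two inclusions proves the claimed equality, and the bounds $-\ell\|\ow\|\le\d f(\ox)(\ow)\le\ell\|\ow\|$ give the in-particular statement. The only genuine subtlety is this last lower bound: because $\d f(\ox)(\ow)$ is defined through an \emph{unrestricted} lower limit over $w\to\ow$, one cannot apply Lipschitz continuity pointwise, and one must notice that the pairs $(t,w)$ which drive $\ox+tw$ out of $\dom f$ contribute the value $+\infty$ and hence can never spoil the lower estimate.
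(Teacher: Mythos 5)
Your proof is correct and follows essentially the same route as the paper: the first inclusion from the definition, and the reverse inclusion via the relative Lipschitz estimate on the difference quotients. The only difference is that you explicitly justify the lower bound $\d f(\ox)(\ow)\ge-\ell\|\ow\|$ by noting that pairs $(t,w)$ driving $\ox+tw$ out of $\dom f$ contribute $+\infty$ to the lower limit — a detail the paper compresses into the single line ``this clearly yields $|\d f(\ox)(w)|\le\ell\|w\|$'' — so your write-up is, if anything, more complete.
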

\begin{proof} The inclusion $\dom \d f(\ox) \subset T_{\ss \dom f} (\ox)$ results  directly from  the definition. To prove the opposite inclusion, 
pick  $w \in T_{\ss\dom f} (\ox)$. This gives us some sequences $t_k\dn 0$ and $w_k\to w$ such that $\ox+t_kw_k\in \dom f$ for all $k\in \N$. Using this and 
the Lipschitz continuity of $f$ around $\ox$ relative to its domain implies that for all $k$ sufficiently large we have  
\begin{equation}\label{fsdp}
 | \frac{f(\ox+ t_k w_k)- f(\ox)}{t_k} | \leq \ell \|w_k \|. 
\end{equation}
This clearly yields  $|\d f(\ox)(w)| \leq \ell \|w\|$.  Thus $\d f(\ox)(w)$ is finite and so  $w\in \dom \d f(\ox)$. 
This gives us  the inclusion $T_{\ss \dom f} (\ox)\subset \dom \d f(\ox)$ and hence completes the proof.
\end{proof}
\vspace{-0.1 in}

\section{ Twice Epi-Differetiability of Parabolically Regular Functions}\label{sect03} 
This section aims to delineate conditions under which the twice epi-differenibility of extend-real-valued functions can be established. 
To this end, we appeal to  an important second-order regularity condition, called parabolic regularity, which was recently exploited in \cite{mms2}
to study a similar property for constraint systems. We begin with the definition of this  regularity condition.

\begin{Definition}[parabolic regularity]\label{pre} A function  $f:\X \to \oR$ is parabolically regular at $\ox$ for  $\ov\in \X$ if 
$f(\ox)$ is finite and  if for any $w$ such that  $\d^2 f(\bar x , \ov)(w)<\infty  $, there exist, among the sequences $t_k\dn 0$ and $w_k\to w$ with 
$\Delta_{t_k}^2 f(\bar x , \ov)(w_k) \to \d^2 f(\bar x , \ov)(w)$, those with the additional property that 
\begin{equation}\label{prc}
\limsup_{k\to \infty} \frac{\|w_k-w\|}{t_k}<\infty.
\end{equation}
A nonempty set $C\subset \X$ is said to be parabolically regular at $\ox$ for  $\ov$ if the indicator function $\dd_C$ is parabolically regular at $\ox$ for  $\ov$.
\end{Definition}
Although the notion of parabolic regularity was introduced   first in \cite[Definition~13.65]{rw}, its origin goes back to \cite[Theorem~4.4]{ch0}, where 
Chaney observed a duality relationship between his second-order generalized derivative and the parabolic subderivative, defined in \cite{bz1} by Ben-Tal and Zowe.  
This  duality relationship was derived later  by Rockafellar \cite[Proposition~3.5]{r88} for convex piecewise 
linear-quadratic functions. As shown in Proposition~\ref{parregularity} below, the latter duality relationship is equivalent to the concept of parabolic regularity from Definition~\ref{pre} provided that $\ov$,
appearing in Definition~\ref{pre},  is a proximal subgradient.
A different  second-order regularity was introduced by Bonnans, Comminetti,  and Shapiro \cite[Definition~3]{bcs} for sets, which was later  extended in \cite[Definition~3.93]{bs} for functions.
It is not difficult to see that parabolic regularity is implied by the second-order regularity in the sense of \cite{bcs}; see \cite[Proposition~3.103]{bs} for a proof of this result. 
Moreover,  the example from  \cite[page~215]{bs}  shows that the converse implication may not hold in general.  

We showed in \cite{mms2} that important sets appearing in constrained optimization problems, including polyhedral convex sets, the second-order cone, and 
the cone of positive semidefinite symmetric matrices, are parabolically regular. Below, we add two important classes of functions for which 
this property automatically fulfill. We begin first by  convex piecewise-linear quadratic functions and then 
consider  eigenvalues functions.
While  the former was justified in \cite[Theorem~13.67]{rw}, we provide below a different and simpler proof. 
\begin{Example}[piecewise linear-quadratic functions]\label{plqf}
{\rm Assume that the function $f : \X \to \oR $ with $\X=\R^n$ 
is convex  piecewise linear-quadratic. Recall  that $f$ is called  piecewise linear-quadratic if $\dom f = \cup_{i=1}^{s} C_i$ with $s\in \N$ and $C_i $ being polyhedral convex  sets for $i = 1, \ldots, s$, and if $f$ has a representation of the form
\begin{equation*} \label{PWLQ}
f(x) = \sm \langle A_i x ,x \rangle + \langle a_i ,x \rangle + \alpha_i  \quad \mbox{for all} \quad  x \in C_i,
\end{equation*}
where $A_i$ is an $n \times n$ symmetric matrix, $a_i\in \R^n$, and $\alpha_i\in \R$ for $i = 1, \cdots, s$.
It was proven in \cite[Propsoition~13.9]{rw}
that the second subderivative of $f$ at $\ox$ for $\ov\in \sub f(\ox)$ can be calculated by 
\begin{equation}\label{pwfor}
 \d^2 f(\ox , \ov ) (w)  
=\begin{cases}
\la A_i w, w\ra&\mbox{if}\;\; w\in T_{ C_i}(\ox) \cap \{\ov_i\}^{\bot},\\
\infty&\mbox{otherwise},
\end{cases}
\end{equation}
where $\ov_i:=\ov -A_i \ox -a_i$. To  prove the parabolic regularity of $f$ at $\ox$ for $\ov$, 
pick a vector $w\in \R^n$ with $\d^2 f(\bar x , \ov)(w)<\infty  $. This implies that there is an $i$ with $1\le i\le s$ such that 
$w\in T_{ C_i}(\ox) \cap \{\ov_i\}^{\bot}$. Since $C_i$ is a polyhedral convex set, we conclude from \cite[Exercise~6.47]{rw} that there exists 
an $\ve>0$ such that $\ox+tw\in C_i$ for all $t\in [0,\ve]$. Pick a sequence $t_k\dn 0$ such that $t_k\in [0,\ve]$ and let $w_k:=w$ for all $k\in \N$.
Thus a simple calculation tells us that 
$$
\begin{array}{lll}
\Delta_{t_k}^2 f(\bar x , \ov)(w_k)=\dfrac{f(\ox+t_kw_k)-f(\ox)-t_k\langle \ov,\,w_k\rangle}{\frac{1}{2}t_k^2}\\
~~~~=\dfrac{\frac{1}{2} \langle A_i (\ox+t_kw_k) ,\ox+t_kw_k \rangle + \langle a_i ,\ox+t_kw_k \rangle + \alpha_i - \frac{1}{2} \langle A_i \ox ,\ox \rangle - \langle a_i ,\ox \rangle - \alpha_i -t_k\langle \ov,\,w_k\rangle}{\frac{1}{2}t_k^2}\\
~~~~= \la A_i w, w\ra+\dfrac{ t_k\la w_k, \ov -A_i \ox -a_i\ra}{ \frac{1}{2}t_k^2}=\la A_i w, w\ra,
\end{array}
$$
which in turn  implies by \eqref{pwfor}  that $\Delta_{t_k}^2 f(\bar x , \ov)(w_k) \to \d^2 f(\bar x , \ov)(w)$ as $k\to \infty$. Since \eqref{prc} is clearly 
holds, $f$ is parabolic regular at $\ox$ for $\ov$.
}
\end{Example}

\begin{Example}[eigenvalue functions]\label{eig}
{\rm Let $\X={\S}^n$ be the space of $n\times n$ symmetric real matrices, which is conveniently treated via the inner product
$$
\la A,B\ra:=\tr AB
$$
with $\tr AB$ standing for the sum of the diagonal entries of $AB$. For a matrix $A\in {\S}^n$, we denote by $A^ \dagger$ the Moore-Penrose pseudo-inverse of $A$
and  by $\mbox{eig}\, A=(\lm_1(A),\ldots, \lm_n(A))$
the vector of eigenvalues of $A$ in decreasing order with eigenvalues repeated according to their multiplicity. Given $i\in \{1,\ldots,n\}$, denote by $\ell_i(A)$
the number of eigenvalues that are equal to $\lm_i(A)$ but are ranked before $i$ including   $\lm_i(A)$.  This integer allows us to locate $\lm_i(A)$
in the group of the eigenvalues of  $A$ as follows:
$$
\lm_1(A)\ge \cdots \ge \lm_{i-\ell_i(A)}>\lm_{i-\ell_i(A)+1}(A)= \cdots =\lm_i(A)\ge \cdots\ge \lm_n(A).
$$
The eigenvalue $\lm_{i-\ell_i(A)+1}(A)$, ranking first in the group of eigenvalues equal to $\lm_i(A)$,  is called the {\em leading}
eigenvalue. For any $i\in \{1,\ldots,n\}$, define now the function $\al_i:{\S}^n\to \R$ by 
\begin{equation}\label{fal}
\al_i(A)= \lm_{i-\ell_i(A)+1}(A)+ \cdots +\lm_i(A),\quad A\in \S^n.
\end{equation}
It was proven in \cite[Theorem~2.1]{t2} that $\Hat\sub \al_i(A)=\sub \al_i(A)$ and that the second subderivative of $\al_i$ at $A$ for any $V\in \sub \al_i(A)$
is calculated for every $W\in {\S}^n$ by 
\begin{equation}\label{alss}
\d^2 \al_i(A,V)(W)=\begin{cases}
2\la V, W(\lm_i(A)I_n-A)^{\dagger}W\ra &\mbox{if}\;\; \d\al_i(A)(W)=\la X,W\ra,\\
\infty& \mbox{otherwise},
\end{cases}
\end{equation}
where $I_n$ stands for the $n\times n$ identity matrix. 
Moreover, for any $W\in {\S}^n$ with $\d^2 \al_i(A, H)(W)<\infty$ and any sequence $t_k\dn 0$,  the proof of \cite[Theorem~2.1]{t2} confirms that 
$$
\Delta^2_{t_k} \al_i(A,V)(W_k)\to \d^2 \al_i(A,V)(W)\quad \mbox{with}\;\; W_k:=W-t_kW(\lm_i(A)I_n-A)^{\dagger}W.
$$
This readily verifies \eqref{prc} and thus the functions $\al_i$,  $i\in \{1,\ldots,n\}$, are parabolically regular at $A$ for any $V\in \sub \al_i(A)$.
In particular, for $i=1$, the function $\al_i$ from \eqref{fal} boils down to the maximum eigenvalue function of a matrix, namely 
\begin{equation}\label{maxeig}
\lm_{\ss\max}(A):=\al_1(A)= \lm_1(A),\quad A\in \S^n.
\end{equation}
So the maximum eigenvalue function $\lm_{\max}$ is parabolically regular at $A$ for any $V\in \sub \lm_{\max}(A)$.
This can be said for any leading eigenvalue $\lm_{i-\ell_i(A)+1}(A)$ since we have $\al_i(B)=\lm_{i-\ell_i(A)+1}(B)$ for every matrix $B\in \S^n$ sufficiently  close to $A$.
Another important function related to the eigenvalues of a matrix $A\in \S^n$ is the sum of the first $i$ components of $\eig A$ with $i\in \{1,\ldots,n\}$, namely 
\begin{equation}\label{fsig}
\sigma_i(A)= \lm_{1}(A)+ \cdots +\lm_i(A).
\end{equation}
It is well-known that the functions $\sigma_i$ are convex (cf. \cite[Exercise~2.54]{rw}). Moreover, we have $\sigma_i(A)=\al_i(A)+\sigma_{i-\ell_i(A)}(A)$.
It follows from \cite[Proposition~1.3]{t2} that  $\sigma_{i-\ell_i(A)}$ is  twice continuously differentiable (${\cal C}^2$-smooth) on ${\S}^n$. 
This together with the parabolic regularity of $\al_i$ ensures that $\sigma_i$ are parabolically regular at $A$ for any $V\in \sub \sigma_i(A)$.
}
\end{Example}

To proceed further in this section, we require the concept of the parabolic subderivative, introduced by  Ben-Tal and Zowe in \cite{bz1}. 
Let    $f:\X \to \oR$ and let  $\ox\in \dom f$   and $w\in \X$ with $\d f(\ox)(w)$ finite.  
The {\em parabolic subderivative} of $f$ at $\ox$ for $w$ with respect to $z$ is defined by 
\begin{equation*}\label{lk02}
\d^2 f(\bar x)(w\verl z):= \liminf_{\substack{
   t\dn 0 \\
  z'\to z
  }} \dfrac{f(\ox+tw+\frac{1}{2}t^2 z')-f(\ox)-t\d f(\ox)(w)}{\frac{1}{2}t^2}.
\end{equation*}
Recall from  \cite[Definition~13.59]{rw} that $f$ is called {\em parabolically epi-differentiable} at $\ox$ for $w$ if  
$$\dom \d^2 f(\ox)(w \verl \cdot)=\big\{z\in \X|\,  \d^2 f(\ox)(w \verl z)<\infty \big\}\neq \emptyset,$$
 and for every $z \in \X$ and every sequence $t_k\dn 0$  there exists a  sequences $z_k\to z$ such that
\begin{equation}\label{pepi}
\d^2 f(\bar x)(w\verl z) = \lim_{k\to \infty} \dfrac{f(\ox+t_kw+\frac{1}{2}t_k^2 z_k)-f(\ox)-t_k\d f(\ox)(w)}{\frac{1}{2}t_k^2}.
\end{equation}

The main interest in parabolic subderivatives in this paper lies in its nontrivial connection with second subderivatives. 
Indeed,  it was shown in \cite[Proposition~13.64]{rw} that if the function $f:\X\to \oR$ is finite at $\ox$, then for any pair $(\ov,w)\in \X\times \X$ with  $\d f(\ox)(w)=\la w,\ov\ra$ we always  have 
\begin{equation}\label{pr2}
\d^2 f(\bar x,\ov)(w)\leq \inf_{z\in \X}\big\{\d^2 f(\ox)(w\verl z) -\la z, \ov\ra\big \}.
\end{equation}
As observed below, equality in this estimate amounts to the parabolic regularity of $f$ at $\ox$ for $\ov$.
To proceed, let  $f: \X \to \oR$ and  pick $(\ox,\ov)\in \gph \sub f$. The {\em critical cone} of $f$ at  $(\ox,\ov)$ is defined by  
\begin{equation}\label{krit1}
K_f(\ox,\ov):= \big\{ w \in \X \:|\: \d f(\ox)(w) = \la \ov , w \ra \big\}.
\end{equation}
When $f$ is the indicator function of a set, this definition boils down to the classical definition of the critical cone for sets; see \cite[page~109]{dr}.
It is not difficult to see that the set $K_f(\ox,\ov)$ is a cone in $\X$. Taking into account Proposition~\ref{ssp}(ii), we conclude that the domain of the second subderivative $ \d^2 f (\bar x , \ov ) $
is always included in the critical cone $K_f(\ox,\ov)$ provided that $ \d^2 f (\bar x , \ov ) $ is a proper function. 
The following result provides conditions under which the domain of the second subderivative is the entire critical cone. 

\begin{Proposition}[domain of second subderivatives]\label{dss}
Assume that  $f:\X \to \oR$ is finite at $\ox$ with $\ov \in \sub^p f(\ox)$  and that for every $w \in K_f(\ox,\ov)$ we have $ \dom \d^2 f(\ox)(w\verl \cdot)\neq \emptyset$. 
Then for all $w \in K_f (\ox,\ov) $ we have 
\begin{equation}\label{pri3}
-r \|w\|^2 \leq  \d^2 f (\bar x,\ov ) (w) \leq  \inf_{z\in \X}\big\{\d^2 f(\ox)(w\verl z) -\la z, \ov\ra\big \}  < \infty,
\end{equation} 
where $r\in \R_+$ is a constant satisfying \eqref{proxs}. In particular, we have  $ \dom \d^2 f (\bar x , \ov ) =  K_f (\ox,\ov)$.
\end{Proposition}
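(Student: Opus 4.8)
The plan is to obtain \eqref{pri3} by concatenating three estimates that are already at our disposal, and then to read off the identity $\dom \d^2 f(\bar x,\ov)=K_f(\ox,\ov)$ as a by-product. Throughout, fix an arbitrary $w\in K_f(\ox,\ov)$, so that by \eqref{krit1} we have $\d f(\ox)(w)=\la \ov,w\ra$.

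First I would establish the leftmost inequality in \eqref{pri3}. Since $\ov\in\sub^p f(\ox)$ with $r\in\R_+$ taken from \eqref{proxs}, Proposition~\ref{ssp}(iii) applies verbatim and yields $\d^2 f(\bar x,\ov)(w)\ge -r\|w\|^2$; this part requires no further work. Next, because $w\in K_f(\ox,\ov)$ guarantees $\d f(\ox)(w)=\la \ov,w\ra$, the estimate \eqref{pr2} is valid at this $w$ and delivers the middle inequality $\d^2 f(\bar x,\ov)(w)\le \inf_{z\in\X}\{\d^2 f(\ox)(w\verl z)-\la z,\ov\ra\}$. The only point that genuinely needs an argument is the rightmost strict inequality: here I would invoke the standing hypothesis that $\dom \d^2 f(\ox)(w\verl \cdot)\neq\emptyset$, pick any $z\in\X$ with $\d^2 f(\ox)(w\verl z)<\infty$, observe that $\d^2 f(\ox)(w\verl z)-\la z,\ov\ra$ is then a real number (as $\la z,\ov\ra\in\R$), and conclude that the infimum over $z$ is $<\infty$. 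Combining the three estimates proves \eqref{pri3}; in particular both $\d^2 f(\bar x,\ov)(w)$ and the infimum lie in $[-r\|w\|^2,\infty)$, hence are finite, so $w\in\dom \d^2 f(\bar x,\ov)$. As $w\in K_f(\ox,\ov)$ was arbitrary, this gives the inclusion $K_f(\ox,\ov)\subset \dom \d^2 f(\bar x,\ov)$.

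For the reverse inclusion I would first verify that $\d^2 f(\bar x,\ov)$ is a proper function: Proposition~\ref{ssp}(iii) gives $\d^2 f(\bar x,\ov)(w)>-\infty$ for every $w\in\X$, while $\dom \d^2 f(\bar x,\ov)$ is nonempty because, by the inclusion just proved, it contains the zero vector (which lies in the cone $K_f(\ox,\ov)$). Proposition~\ref{ssp}(ii) is then applicable and yields $\dom \d^2 f(\bar x,\ov)\subset\{w\in\X\,|\,\d f(\ox)(w)=\la\ov,w\ra\}=K_f(\ox,\ov)$. Together with the previous step this gives $\dom \d^2 f(\bar x,\ov)=K_f(\ox,\ov)$ and completes the proof.

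The argument is essentially a synthesis of the preliminary results, so there is no serious obstacle; the one place requiring care is the bookkeeping needed to confirm that $\d^2 f(\bar x,\ov)$ is proper before invoking Proposition~\ref{ssp}(ii) — specifically, that its domain is nonempty, which must be drawn from the inclusion $K_f(\ox,\ov)\subset\dom \d^2 f(\bar x,\ov)$ rather than assumed outright.
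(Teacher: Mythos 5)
Your proposal is correct and follows essentially the same route as the paper: Proposition~\ref{ssp}(iii) for the lower bound, \eqref{pr2} for the middle inequality, the nonemptiness hypothesis on $\dom \d^2 f(\ox)(w\verl\cdot)$ for finiteness of the infimum, and Proposition~\ref{ssp}(ii) for the inclusion $\dom \d^2 f(\bar x,\ov)\subset K_f(\ox,\ov)$. The only cosmetic difference is in how properness is secured — the paper reads it off from $\d^2 f(\bar x,\ov)(0)=0$ (indeed, Proposition~\ref{ssp}(iii) already asserts properness outright), whereas you derive nonemptiness of the domain from the inclusion $K_f(\ox,\ov)\subset\dom \d^2 f(\bar x,\ov)$; both are valid.
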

\begin{proof} The lower estimate of $\d^2 f (\bar x , \ov )$ in \eqref{pri3} results from Proposition~\ref{ssp}(iii), which readily implies that $\d^2 f (\bar x,\ov ) (0)=0$. This tells us that  the second subderivative $\d^2 f (\bar x,\ov )$
is proper. Employing now Proposition~\ref{ssp}(ii) gives us the inclusion $ \dom \d^2 f (\bar x , \ov )\subset K_f (\ox,\ov)$. 
The upper estimate of $\d^2 f (\bar x , \ov ) (w)$ in \eqref{pri3} directly comes from \eqref{pr2}.
By assumptions,   for any $w \in K_f (\ox,\ov) $, there exists a $z_{w}$ so that $ \d^2 f(\ox)(w\verl z_w)<\infty$. This  guarantees that the {infimum} term in \eqref{pri3} is finite.
Pick $w\in K_f (\ox,\ov) $ and observe from \eqref{pri3} that $\d^2 f (\bar x , \ov ) (w)$ is  finite. This yields  the inclusion 
 $K_f (\ox,\ov)\subset  \dom \d^2 f (\bar x , \ov )$, which   completes the proof. 
\end{proof}

 The following example, taken from \cite[page~636]{rw}, shows the domain of the second subderivative can be the entire set $K_f (\ox,\ov)$
even if the assumption on the domain of the parabolic subderivative  in Proposition~\ref{dss} fails. As shown in the next section, however, this condition is automatically satisfied  for  composite functions appearing in \eqref{comp}.
\begin{Example}[domain of second subderivative] \label{ex01} {\rm Define the function $f:\X\to \oR$ with $\X=\R^2$ by 
$f(x_1,x_2)=|x_2-x_1^{{4/3}}|-x_1^2$.  As argued in \cite[page~636]{rw}, the subderivative and subdifferential of $f$ at $\ox=(0,0)$, respectively, are 
$$
\d f(\ox)(w)=|w_2|\quad \mbox{and}\quad \sub f(\ox)=\big\{v=(v_1,v_2)\in \R^2|\, v_1=0, \,|v_2|\le 1\big\},
$$
where $w=(w_1,w_2)\in \R^2$. It is not hard to see that $\ov=(0,0)\in \sub^p f(\ox)$. Moreover, the second subderivative of 
$f$ at $\ox$ for $\ov$ has a representation of the form 
$$
\d^2 f(\ox, \ov)(w)=\begin{cases}
-2w_1^2&\mbox{if}\;\; w_2=0,\\
\infty&\mbox{if}\;\; w_2\neq 0.
\end{cases}
$$
Using the above calculation tells us that $K_f (\ox,\ov)=\{w=(w_1,w_2)|\, w_2=0\}$. Thus we have $\dom \d^2 f (\bar x , \ov ) =  K_f (\ox,\ov)$.
However, for any $w=(w_1,w_2)\in  K_f (\ox,\ov)$ with $w_1\neq 0$ we have 
$$
\d^2 f(\ox)(w\verl z)=\infty\quad \mbox{for all}\;\; z\in \R^2,
$$
which confirms that the assumption related to the domain of the parabolic subderivative in Proposition~\ref{dss} fails. 
}
\end{Example}

We proceed next by providing an important characterization of the  parabolic regularity that plays a key role in our developments 
in this paper. 

\begin{Proposition}[characterization of parabolic regularity] \label{parregularity} 
Assume that  $f:\X \to \oR$ is finite at $\ox$ with $\ov \in \sub^p f(\ox)$. Then  
the  function $f$ is parabolically regular at $\ox$ for  $\ov $ if and only if we have
\begin{equation}\label{pri1}
\d^2 f (\bar x , \ov ) (w) =  \inf_{z\in \X}\big\{\d^2 f(\ox)(w\verl z) -\la z, \ov\ra\big \} 
\end{equation}
for all $w\in K_f (\ox,\ov)$. Furthermore, for any $w\in \dom \d^2 f (\bar x , \ov )$, there exists a $\oz\in \dom \d^2 f(\ox)(w\verl \cdot)$ such that 
\begin{equation}\label{pri11}
\d^2 f (\bar x , \ov ) (w) = \d^2 f(\ox)(w\verl \oz) -\la \oz, \ov\ra.
\end{equation}
\end{Proposition}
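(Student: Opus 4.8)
The plan is to prove the two implications separately; the displayed identity \eqref{pri11} will fall out of the proof of the "only if" direction.

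\emph{The "only if" direction.} Suppose $f$ is parabolically regular at $\ox$ for $\ov$. For every $w\in K_f(\ox,\ov)$ the inequality "$\le$" in \eqref{pri1} is exactly \eqref{pr2} (note $w\in K_f(\ox,\ov)$ means $\d f(\ox)(w)=\la\ov,w\ra$), and it also settles the case $\d^2f(\ox,\ov)(w)=\infty$. So fix $w\in K_f(\ox,\ov)$ with $\d^2f(\ox,\ov)(w)<\infty$. Parabolic regularity supplies $t_k\dn 0$ and $w_k\to w$ with $\Delta_{t_k}^2f(\ox,\ov)(w_k)\to\d^2f(\ox,\ov)(w)$ and $\limsup_k\|w_k-w\|/t_k<\infty$. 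Put $z_k:=2(w_k-w)/t_k$; the sequence $\{z_k\}$ is then bounded, so along a subsequence $z_k\to\oz$. Since $\ox+t_kw_k=\ox+t_kw+\tfrac12 t_k^2z_k$ and $t_k\la\ov,w_k\ra=t_k\d f(\ox)(w)+\tfrac12 t_k^2\la\ov,z_k\ra$ (using $w\in K_f(\ox,\ov)$), a direct computation gives
$$\Delta_{t_k}^2f(\ox,\ov)(w_k)=\frac{f(\ox+t_kw+\tfrac12 t_k^2z_k)-f(\ox)-t_k\d f(\ox)(w)}{\tfrac12 t_k^2}-\la\ov,z_k\ra.$$
Passing to the limit along the subsequence, the fraction on the right has the finite limit $\d^2f(\ox,\ov)(w)+\la\ov,\oz\ra$, and since $(t_k,z_k)\to(0,\oz)$ is admissible in the liminf defining the parabolic subderivative, this limit is $\ge\d^2f(\ox)(w\verl\oz)$. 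Hence $\d^2f(\ox,\ov)(w)\ge\d^2f(\ox)(w\verl\oz)-\la\ov,\oz\ra\ge\inf_{z\in\X}\{\d^2f(\ox)(w\verl z)-\la\ov,z\ra\}$, which together with \eqref{pr2} proves \eqref{pri1} and shows the infimum is attained at $\oz$. The common value being finite forces $\d^2f(\ox)(w\verl\oz)$ finite, i.e. $\oz\in\dom\d^2f(\ox)(w\verl\cdot)$. By Proposition~\ref{ssp}(iii) the function $\d^2f(\ox,\ov)$ is proper, so by Proposition~\ref{ssp}(ii) we have $\dom\d^2f(\ox,\ov)\subset K_f(\ox,\ov)$; thus \eqref{pri11} holds for every $w\in\dom\d^2f(\ox,\ov)$.

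\emph{The "if" direction.} Suppose \eqref{pri1} holds for all $w\in K_f(\ox,\ov)$. Let $w$ satisfy $\d^2f(\ox,\ov)(w)<\infty$; by Proposition~\ref{ssp}(ii),(iii), $w\in K_f(\ox,\ov)$, so \eqref{pri1} makes $\inf_{z}\{\d^2f(\ox)(w\verl z)-\la\ov,z\ra\}$ equal to $\d^2f(\ox,\ov)(w)$, hence finite. The crux (see below) is that this infimum is attained, say at $\oz$, which then lies in $\dom\d^2f(\ox)(w\verl\cdot)$ as above. Granting that, choose from the liminf defining $\d^2f(\ox)(w\verl\oz)$ sequences $t_k\dn 0$ and $z_k\to\oz$ with $\tfrac{f(\ox+t_kw+\frac12 t_k^2z_k)-f(\ox)-t_k\d f(\ox)(w)}{\frac12 t_k^2}\to\d^2f(\ox)(w\verl\oz)$, and set $w_k:=w+\tfrac12 t_kz_k$. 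Then $w_k\to w$, the computation above read in reverse gives $\Delta_{t_k}^2f(\ox,\ov)(w_k)\to\d^2f(\ox)(w\verl\oz)-\la\ov,\oz\ra=\d^2f(\ox,\ov)(w)$, and $\|w_k-w\|/t_k=\tfrac12\|z_k\|\to\tfrac12\|\oz\|<\infty$, so \eqref{prc} holds; thus $f$ is parabolically regular at $\ox$ for $\ov$.

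\emph{On the attainment step.} Three facts are at hand: $z\mapsto\d^2f(\ox)(w\verl z)$ is l.s.c. (its epigraph is the second-order tangent set $T^2_{\epi f}$ to $\epi f$ at $(\ox,f(\ox))$ for $(w,\d f(\ox)(w))$); applying \eqref{proxs} along the parabola $t\mapsto\ox+tw+\tfrac12 t^2z'$ and using $w\in K_f(\ox,\ov)$ yields the linear lower bound $\d^2f(\ox)(w\verl z)\ge\la\ov,z\ra-r\|w\|^2$, so $z\mapsto\d^2f(\ox)(w\verl z)-\la\ov,z\ra$ is proper, l.s.c., and bounded below by $\d^2f(\ox,\ov)(w)$; and, running the two computations above once more, the variant of the second subderivative obtained by restricting the liminf to \emph{curvature-bounded} sequences (those with $\limsup_k\|w_k-w\|/t_k<\infty$) equals $\inf_{z}\{\d^2f(\ox)(w\verl z)-\la\ov,z\ra\}$. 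Under \eqref{pri1} this restricted quantity therefore equals $\d^2f(\ox,\ov)(w)$, and one extracts a single curvature-bounded sequence realizing it, whose rescaled curvature vectors $2(w_k-w)/t_k$ are bounded and converge (along a subsequence) to the sought minimizer $\oz$.

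\emph{Main obstacle.} The delicate point is precisely this attainment/curvature-boundedness step in the "if" direction: one must rule out that $\d^2f(\ox,\ov)(w)$ is realized only along sequences with $\|w_k-w\|/t_k\to\infty$, and it is here that the hypothesis $\ov\in\sub^pf(\ox)$ (through the lower bound above) together with the identity \eqref{pri1} are used in an essential way — whereas the l.s.c.\ and the linear lower bound themselves are routine, and the "only if" direction is a short direct computation.
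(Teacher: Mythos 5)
Your proof follows essentially the same route as the paper's: the ``only if'' direction is the identical rescaling argument (set $z_k=2(w_k-w)/t_k$, note boundedness, pass to a convergent subsequence, and run the same chain of inequalities against \eqref{pr2}), while your ``if'' direction amounts to re-deriving \cite[Proposition~13.64]{rw} --- the identity between $\inf_{z\in\X}\{\d^2 f(\ox)(w\verl z)-\la z,\ov\ra\}$ and the liminf of $\Delta_t^2 f(\ox,\ov)$ restricted to curvature-bounded sequences --- which the paper simply cites before concluding \eqref{prc}. The extraction of a single curvature-bounded sequence that you flag as the main obstacle is precisely the step the paper dispatches with that citation, so your extra detour through attainment of the infimum at $\oz$ is logically redundant (such a sequence already witnesses \eqref{prc} directly) but does not constitute a departure from, or a gap relative to, the paper's argument.
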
 
\begin{proof} It follows from $\ov \in \sub^p f(\ox)$ and  Proposition~\ref{ssp}(ii)-(iii) that the second subderivative  $ \d^2 f (\bar x , \ov ) $ is a proper function and 
\begin{equation}\label{dss1}
\dom \d^2 f (\bar x , \ov ) \subset  K_f (\ox,\ov).
\end{equation}
Assume now that $f$ is parabolically regular at $\ox$ for  $\ov $. If there exists a $w\in K_f (\ox,\ov)\setminus \dom \d^2 f (\bar x , \ov )$, then \eqref{pri1} clearly holds due to \eqref{pr2}.
Suppose now that  $w \in \dom \d^2 f (\bar x , \ov )$. By Definition~\ref{pre},  there are  sequences $t_k \dn 0$ and $w_k \to w$ for which we have 
\begin{equation*}\label{pri4}
\Delta_{t_k}^2  f (\ox,\ov)(w_k) \to \d^2 f (\bar x,\ov)(w)\quad \mbox{and} \quad
\limsup_{k\to \infty} \frac{\|w_k-w\|}{t_k}<\infty.
\end{equation*}
Since the sequence $z_k:=2[w_k-w]/t_k$ is bounded, we can assume  by passing to a subsequence if necessary that $ z_k \to \oz$ as $k \to \infty $ for some $\oz\in \X$.
Thus we have $w_k = w + \frac{1}{2} t_k z_k$ and 
\begin{eqnarray*}\label{pri5}
 \d^2 f (\bar x,\ov)(w) & = & \lim_{k \to \infty}  \dfrac{f (\ox+t_k w_k)- f(\ox)- t_k\langle \ov , w_k \rangle}{\frac{1}{2}t_k^2} \\
&=& \lim_{k \to \infty}   \dfrac{f (\ox+t_k w + \frac{1}{2} t_k^2 z_k )- f(\ox)- t_k\langle \ov , w \rangle}{\frac{1}{2}t_k^2}  - \la \ov ,z_k  \ra  \\
 &\ge & \liminf_{k \to \infty}  \dfrac{f (\ox+t_k w + \frac{1}{2} t_k^2 z_k )- f(\ox)- t_k \d f(\ox)(w)  }{\frac{1}{2}t_k^2}  - \la \ov ,\oz  \ra \\
&\geq &  \d^2 f(\ox)(w\verl \oz) -\la \ov , \oz \ra.
\end{eqnarray*}
Combining this and   \eqref{pr2} implies that   \eqref{pri1} and \eqref{pri11} hold for all $w\in  \dom \d^2 f (\bar x , \ov )$.
To obtain the opposite implication, assume that \eqref{pri1} holds for all $w\in K_f (\ox,\ov)$.
To prove the parabolic regularity of $f$ at  $\ox$ for $\ov$, let $\d^2 f (\bar x , \ov ) (w)<\infty$, which by \eqref{dss1}  yields  $w\in K_f (\ox,\ov)$. 
Employing now \cite[Proposition~13.64]{rw} results in  
$$
\d^2 f (\bar x , \ov )(w)= \inf_{z\in \X}\big\{\d^2 f(\ox)(w\verl z) -\la z, \ov\ra\big \}  =\liminf_{\substack{
   t\dn 0, \,w'\to w \\
  [w'-w]/t\,\, {\ss\mbox{bounded}}
  }} \Delta_t^2  f(\ox , \ov)(w').
$$ 
The last equality clearly justifies \eqref{prc}, and thus $f$ is parabolically regular at $\ox$ for $\ov$. This completes the proof.
\end{proof}

We next show that the indicator function of the cone of $n\times n$ positive  semidefinite symmetric matrices, denoted by ${\S}^n_+$, 
is parabolic regular. This can be achieved via \cite[Theorem~6.2]{mms2}  using the theory of ${\cal C}^2$-cone reducible sets but below  we give an 
independent proof via Proposition~\ref{parregularity}.
\begin{Example}[parabolic regularity of ${\S}^n_+$]  
{\rm Let ${\S}^n_-$ stand for the cone of $n\times n$ negative semidefinite symmetric matrices. 
For any $A\in \S^n_-$, we are going to show that $f:=\dd_{\S^n_-}$ is parabolic regular at $A$ for any $V\in N_{\S^n_-}(A)$.
Since we have $\S^n_+=-\S^n_-$, this clearly yields the same property for $\S_+^n$. Using the notation in Example~\ref{eig},
we can equivalently write 
\begin{equation}\label{sn}
\S^n_-=\big\{A\in \S^n|\, \lm_1(A)\le 0\big\},
\end{equation}
which in turn implies that $\dd_{\S^n_-}(A)=\dd_{\R_-}(\lm_1(A))$ for any $A\in \S^n$.
If $A$ is negative definite, i.e., $\lm_1(A)<0$, then our claim immediately follows from $N_{\S^n_-}(A)=\{0\}$ for this case. 
Otherwise, we have $\lm_1(A)=0$. Pick $V\in  N_{\S^n_-}(A)$ and conclude from 
\eqref{sn} and the chain rule from convex analysis that $N_{\S^n_-}(A)=\R_+\sub \lm_1(A)$,
which implies that $V=rB$ for some $r\in \R_+$ and $B\in \sub \lm_1(A)$. If $r=0$, we get $V=0$ and 
parabolic regularity of $\dd_{\S^n_-}$  at $A$ for $V$ follows directly from the definition. Assume now $r>0$ and 
pick  $W\in K_f(A,V)$. The latter amounts to 
$$
\la V,W\ra=\d \dd_{\S^n_-}(A)(W)=0\quad \mbox{and }\;\; W\in T_{\S^n_-}(A).
$$
Employing now \cite[Proposition~2.61]{bs} tells us that $\d \lm_1(A)(W)\le 0$. Since $B\in \sub \lm_1(A)$ and $\la B,W\ra=0$, we
arrive at $\d \lm_1(A)(W)= 0$.  We know from  \cite[Example~10.28]{rw} that 
$$
\d\lm_1(A)(W)=\lim_{\substack{
   t\dn 0 \\
  W'\to W
  }}\Delta_t \lm_1(A)(W')\quad \mbox{with}\;\;\Delta_t \lm_1(A)(W'):=\frac{\lm_1(A+tW')-\lm_1(A)}{t}.
$$
Using direct calculations, we conclude  for any $t>0$ and $W'\in \S^n$ that 
\begin{eqnarray*}
\Delta_t^2 \dd_{\S^n_-}(A, V)(W')=\Delta_t^2 \dd_{\R_-}(\lm_1(A), r)(\Delta_t \lm_1(A)(W'))+ r\Delta_t^2 \lm_{1}(A, B)(W'),
\end{eqnarray*}
which in turn results in 
$$
\d^2 \dd_{\S^n_-}(A, V)(W)\ge \d^2\dd_{\R_-}(\lm_1(A), r)(\d \lm_1(A)(W))+ r\d^2 \lm_{1}(A, B)(W).
$$
Since $r>0$, $\lm_1(A)=0$, and $\d \lm_1(A)(W)= 0$, we conclude from \cite[Example~3.4]{mms2} that 
$$
\d^2\dd_{\R_-}(\lm_1(A), r)(\d \lm_1(A)(W))=\dd_{K_{\R_-}(\lm_1(A), r)}(0)=\dd_{\{0\}}(0)=0.
$$ 
Using this together with \eqref{alss} brings us to 
$$
\d^2 \dd_{\S^n_-}(A, V)(W)\ge -2r\la B, WA^{\dagger}W\ra=-2\la V, WA^{\dagger}W\ra.
$$
On the other hand, we conclude from \eqref{pri3} that 
$$
\d^2 \dd_{\S^n_-}(A, V)(W)\le -\sigma_{T^2_{\S^n_-}(A, W)}(V)= -2\la V, WA^{\dagger}W\ra,
$$
where the last equality comes from \cite[page~487]{bs} with $\sigma_{T^2_{\S^n_-}(A, W)}$ standing for the support function of $T^2_{\S^n_-}(A, W)$.  Combining these confirms that 
$$
\d^2 \dd_{\S^n_-}(A, V)(W)= -\sigma_{T^2_{\S^n_-}(A, W)}(V)= -2\la V, WA^{\dagger}W\ra\quad \mbox{for all}\;\; W\in K_{f}(A,V).
$$
This together with Proposition~\ref{parregularity} tells us that ${\S^n_-}$ is parabolic regular at $A$ for $V$.
}
\end{Example}

We are now in a position to establish the main result of this section, which states that parabolically regular functions are always twice epi-differentiable.

\begin{Theorem}[twice epi-differenitability of parabolically regular functions] \label{tedp} 
Let $f:\X \to \oR$ be  finite at $\ox$ and $\ov \in \sub^p f(\ox)$ and  let $f$ be parabolically epi-differentiable at $\ox$ for every  $ w \in K_f(\ox,\ov)$. 
If $f$ is parabolically regular at $\ox$ for  $\ov$,  then it is properly twice epi-differentiable at $\ox$ for $\ov$ with
\begin{equation}\label{pri2}
\d^2 f (\bar x ,\ov)(w)= \left\{\begin{array}{ll}
\min_{z\in \X}\big\{\d^2 f(\ox)(w\verl z) -\la z, \ov\ra\big \}  \quad&\mbox{if }\;\; w\in K_f (\ox,\ov),\\
+\infty &\mbox{otherwise}.
\end{array}
\right.
\end{equation} 
\end{Theorem}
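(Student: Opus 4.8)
The plan is to verify the defining convergence \eqref{dedf} of twice epi-differentiability by exploiting parabolic epi-differentiability together with the duality formula \eqref{pri1} furnished by Proposition~\ref{parregularity}. First I would record that the formula \eqref{pri2} for $\d^2 f(\ox,\ov)$ is already in hand: by Proposition~\ref{parregularity} (applicable since $\ov\in\sub^p f(\ox)$), parabolic regularity gives $\d^2 f(\ox,\ov)(w)=\inf_{z}\{\d^2 f(\ox)(w\verl z)-\la z,\ov\ra\}$ for every $w\in K_f(\ox,\ov)$, while $\dom\d^2 f(\ox,\ov)\subset K_f(\ox,\ov)$ by Proposition~\ref{ssp}(ii)--(iii), so $\d^2 f(\ox,\ov)=+\infty$ off the critical cone. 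Moreover the second clause of Proposition~\ref{parregularity} upgrades the infimum to a minimum for $w\in\dom\d^2 f(\ox,\ov)$, and properness follows from Proposition~\ref{ssp}(iii). So the real content is the epi-convergence.

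To establish \eqref{dedf}, fix $t_k\dn 0$ and $w\in\X$. If $w\notin K_f(\ox,\ov)$, I would use Proposition~\ref{ssp}(ii) (critical-cone containment) and the lower semicontinuity of $\d f(\ox)$ to produce, for any $w_k\to w$ with $\liminf\Delta^2_{t_k}f(\ox,\ov)(w_k)<\infty$, a contradiction with $\d f(\ox)(w)\ne\la\ov,w\ra$; hence $\Delta^2_{t_k}f(\ox,\ov)(w_k)\to\infty=\d^2 f(\ox,\ov)(w)$ for the trivial choice $w_k\equiv w$, which is the required convergence when the limit is $+\infty$. The main case is $w\in K_f(\ox,\ov)$. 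Here $\d f(\ox)(w)=\la\ov,w\ra$ is finite, and by hypothesis $f$ is parabolically epi-differentiable at $\ox$ for $w$; pick $\oz$ attaining the minimum in \eqref{pri2}, so $\d^2 f(\ox)(w\verl\oz)<\infty$. Parabolic epi-differentiability \eqref{pepi} then supplies $z_k\to\oz$ with
\[
\frac{f(\ox+t_kw+\tfrac12 t_k^2 z_k)-f(\ox)-t_k\d f(\ox)(w)}{\tfrac12 t_k^2}\;\longrightarrow\;\d^2 f(\ox)(w\verl\oz).
\]
Now set $w_k:=w+\tfrac12 t_k z_k$. Since $z_k\to\oz$ we have $w_k\to w$, and a direct rewriting gives
\[
\Delta^2_{t_k}f(\ox,\ov)(w_k)=\frac{f(\ox+t_kw+\tfrac12 t_k^2 z_k)-f(\ox)-t_k\d f(\ox)(w)}{\tfrac12 t_k^2}-\la\ov,z_k\ra,
\]
using $\d f(\ox)(w)=\la\ov,w\ra$ so that $t_k\la\ov,w_k\ra=t_k\d f(\ox)(w)+\tfrac12 t_k^2\la\ov,z_k\ra$. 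Passing to the limit, the first term tends to $\d^2 f(\ox)(w\verl\oz)$ and $\la\ov,z_k\ra\to\la\ov,\oz\ra$, so $\Delta^2_{t_k}f(\ox,\ov)(w_k)\to\d^2 f(\ox)(w\verl\oz)-\la\ov,\oz\ra=\d^2 f(\ox,\ov)(w)$, which is exactly \eqref{dedf}.

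The one point requiring care — and what I expect to be the main obstacle — is the inequality direction: the construction above shows $\d^2 f(\ox,\ov)(w)\ge\limsup_k\Delta^2_{t_k}f(\ox,\ov)(w_k)$ along the chosen sequence, but the definition of $\d^2 f(\ox,\ov)$ as a $\liminf$ over all $w'\to w$ already gives $\d^2 f(\ox,\ov)(w)\le\liminf_k\Delta^2_{t_k}f(\ox,\ov)(w_k)$; combining the two yields the genuine limit, so no separate argument is needed beyond invoking the definition of the second subderivative. I should also double-check that when $\oz$ realizes the minimum it indeed lies in $\dom\d^2 f(\ox)(w\verl\cdot)$ — this is precisely the content of \eqref{pri11} in Proposition~\ref{parregularity}, so it is available. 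Finally, properness of $\d^2 f(\ox,\ov)$ together with \eqref{pri2} shows the twice epi-differentiability is \emph{proper} in the sense required, completing the proof.
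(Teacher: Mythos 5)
Your proposal is correct and follows essentially the same route as the paper's proof: Proposition~\ref{parregularity} together with Propositions~\ref{ssp} and~\ref{dss} yields formula \eqref{pri2} with the minimum attained at some $\oz$, and the epi-convergence \eqref{dedf} is verified by taking $w_k=w+\tfrac12 t_k z_k$ with $z_k\to\oz$ supplied by parabolic epi-differentiability, the case $w\notin K_f(\ox,\ov)$ being handled trivially via $w_k\equiv w$. The only cosmetic difference is that the paper invokes Proposition~\ref{dss} explicitly at the outset to get $\dom \d^2 f(\ox,\ov)=K_f(\ox,\ov)$ (so that the minimum in \eqref{pri2} is finite on all of the critical cone), a point you use implicitly when picking $\oz$.
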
 
\begin{proof} It follows from the parabolic epi-differentiability of $f$ at $\ox$ for every  $ w \in K_f(\ox,\ov)$ and Proposition~\ref{dss} that $\dom \d^2 f (\bar x , \ov ) =  K_f (\ox,\ov)$.
This together with \eqref{pri1} and \eqref{pri11} justifies the second subderivative formula \eqref{pri2}. 
 To establish the twice epi-differentiability of $f$ at $\ox$ for $\ov$, we are going to show that \eqref{dedf} holds for all $w\in \X$.
 Pick  $w \in K_f (\ox , \ov)$ and an arbitrary sequence $t_k\dn 0$. 
 Since $f$ is parabolically regular at $\ox$ for  $\ov$, by Proposition~\ref{parregularity}, we find a $\oz\in \X$ such that 
  \begin{equation}\label{pl01}
 \d^2 f (\bar x , \ov ) (w) =  \d^2 f(\ox)(w\verl \oz) -\la \oz, \ov\ra.
 \end{equation}
By the parabolic epi-differentiability  of $f$ at $\ox$ for $w$, we find a sequence $z_k\to \oz$ for which we have  
$$
\d^2 f(\ox)(w\verl \oz)=\lim_{k\to \infty} \dfrac{f(\ox+t_kw+\frac{1}{2}t_k^2 z_k)-f(\ox)-t_k\d f(\ox)(w)}{\frac{1}{2}t_k^2}.
$$
Define $w_k := w + \frac{1}{2} t_k z_k$ for all $k\in \N$. Using this and $w\in K_f (\ox , \ov)$, we obtain  
\begin{eqnarray*}
\Delta_{t_k}^2 f (\ox,\ov)(w_k)  & = &  \dfrac{f (\ox+t_k w_k)- f(\ox)- t_k\langle \ov , w_k \rangle}{\frac{1}{2} t_k^2} \\
&=&  \dfrac{f (\ox+t_k w + \frac{1}{2} t_k^2 z_k )- f(\ox)- t_k\d f(\ox)(w)}{\frac{1}{2}t_k^2}  - \la \ov ,z_k  \ra.
\end{eqnarray*}
This together with \eqref{pl01} results in 
$$
 \lim_{k\to \infty} \Delta_{t_k}^2 f (\ox,\ov)(w_k) =  \d^2 f(\bar x)(w\verl \oz) - \la \ov ,\oz  \ra  = \d^2 f (\ox,\ov)(w),
 $$ 
 which justifies \eqref{dedf} for every $w\in K_f (\ox , \ov)$. Finally, we are going to show the validity of \eqref{dedf} for every 
 $w\notin K_f (\ox , \ov)$. For any such a $w$, we conclude from \eqref{pri2} that $\d^2 f (\bar x ,\ov)(w)=\infty$.
 Pick an arbitrary sequence $t_k\dn 0$ and set  $w_k:=w$ for all $k\in \N$.  Thus we have 
$$
\infty=\d^2 f(\bar x ,\ov)(w)\le \liminf_{k\to \infty} \Delta_{t_k}^2 f(\bar x ,\ov)(w_k)\le\limsup_{t\dn 0} \Delta_{t_k}^2 f(\bar x ,\ov)(w_k)\le \infty=\d^2 f(\bar x ,\ov)(w),
$$
 which again proves \eqref{dedf} for all $w\notin K_f (\ox , \ov)$. This completes the proof of the Theorem.
\end{proof}

The above theorem provides a very important generalization of a similar result obtained recently by the authors and Mordukhovich  in 
\cite[Theorem~3.6]{mms2} in which the twice epi-differentiability of set indicator functions  was established. It is not hard 
to see that  the assumptions of Theorem~\ref{tedp} boils down to those in \cite[Theorem~3.6]{mms2}. To the best of our knowledge, 
the only results related to the twice epi-differentiability of functions, beyond set indicator functions, are 
\cite[Theorem~13.14]{rw} and \cite[Theorem~3.1]{t2} in which this property was proven  for the fully amenable 
and  eigenvalue functions, respectively. We will derive these results in Section~\ref{sect05}
as an immediate consequence of our chain rule for the second subderivative. 

We proceed with an important consequence of Theorem~\ref{tedp} in which 
the  proto-differentiability  of subgradient mappings is established under parabolic regularity. Recall that a set-valued mapping 
$S:\X\tto \Y$ is said to be {\em proto-differentiable} at $\ox$ for $\oy$ with $(\ox,\oy)\in \gph S$ if the set $\gph S$
is geometrically derivable at $(\ox,\oy)$. When this condition holds for the set-valued mapping $S$ at $\ox$ for $\oy$,
we refer to $DS(\ox, \oy)$ as  the {\em proto-derivative} of $S$ at $\ox$ for $\oy$.
The connection between the twice epi-differentiablity 
of a function and the proto-differentiability of its subgradient mapping was observed first by Rockafellar in \cite{r90}
for convex functions and was extended later in \cite{pr} for prox-regular functions. Recall that a function 
$f:\X\to \oR$ is called prox-regular at $\ox$ for $\ov$ if $f$ is finite at $\ox$ and  is locally l.s.c. around $\ox$ with $\ov\in \sub f(\ox)$ and 
there are constant $\ve>0$ and $r\ge  0$ such that for all $x\in \B_{\ve}(\ox)$ with $f(x)\le f(\ox)+ \ve$ we have
\begin{eqnarray*}\label{prox}
f(x)\geq f(u)+\la v,x-u \ra- \rsm \|x-u\|^2\quad\quad\mbox{for all} \quad (u,v)\in (\gph \sub f)\cap  \B_{\ve}(\ox, \ov).
\end{eqnarray*}
Moreover, we say that $f$ is {subdifferentially continuous} at $\ox$ for $\ov$ if $(x_k,v_k)\to
 (\ox,\ov)$ with $v_k\in \sub f(x_k)$, one has $f(x_k)\to f(\ox)$.

\begin{Corollary}[proto-differentiability under parabolic regularity]\label{pdpr}
Let $f:\X \to \oR$ be prox-regular and subdifferentially continuous at $\ox$ for $\ov$ and  let $f$ be parabolically epi-differentiable at $\ox$ for every  $ w \in K_f(\ox,\ov)$. 
If $f$ is parabolically regular at $\ox$ for  $\ov$,  then the following equivalent conditions hold:
\begin{itemize}[noitemsep,topsep=0pt]
\item [\bf{(i)}] the function $f$ is twice epi-differentiable at $\ox$ for $\ov$;
\item [\bf{(ii)}] the subgradient  mapping $\sub f$ is proto-differentiable at $\ox$ for $\ov$.
\end{itemize}
Furthermore, the proto-derivative of the subgradient mapping $\sub f$ at $\ox$ for $\ov$ can be calculated by   
\begin{equation}\label{gdpd}
D(\sub f)(\ox, \ov)(w)= \sub \big(\sm\d^2f(\ox, \ov)\big)(w)\quad \mbox{for all}\;\; w\in \X.
\end{equation}

\end{Corollary}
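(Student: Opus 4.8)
The plan is to reduce Corollary~\ref{pdpr} to the combination of Theorem~\ref{tedp} with the known equivalence, for prox-regular and subdifferentially continuous functions, between twice epi-differentiability and proto-differentiability of the subgradient mapping. First I would invoke Theorem~\ref{tedp}: its hypotheses are that $f$ be finite at $\ox$, that $\ov\in\sub^p f(\ox)$, that $f$ be parabolically epi-differentiable at $\ox$ for every $w\in K_f(\ox,\ov)$, and that $f$ be parabolically regular at $\ox$ for $\ov$. The last two are assumed here verbatim; it remains to check $\ov\in\sub^p f(\ox)$, and this is where prox-regularity enters. Indeed, prox-regularity at $\ox$ for $\ov$ with $\ov\in\sub f(\ox)$ yields, by taking $(u,v)=(\ox,\ov)$ in the defining inequality, that $f(x)\ge f(\ox)+\la\ov,x-\ox\ra-\rsm\|x-\ox\|^2$ for all $x\in\B_\ve(\ox)$ with $f(x)\le f(\ox)+\ve$; since $f$ is l.s.c.\ around $\ox$, one shrinks $\ve$ so the level-set restriction becomes automatic, giving exactly \eqref{proxs} and hence $\ov\in\sub^p f(\ox)$. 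So Theorem~\ref{tedp} applies and gives (i), namely that $f$ is twice epi-differentiable at $\ox$ for $\ov$, together with the second-subderivative formula \eqref{pri2}.

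Next I would establish the equivalence of (i) and (ii) and the formula \eqref{gdpd}. This is precisely \cite[Theorem~13.40]{rw} (the Poliquin--Rockafellar proto-differentiability theorem, extending Rockafellar's convex result \cite{r90}), which states that for a function that is prox-regular and subdifferentially continuous at $\ox$ for $\ov$, $f$ is twice epi-differentiable at $\ox$ for $\ov$ if and only if $\sub f$ is proto-differentiable at $\ox$ for $\ov$, in which case $D(\sub f)(\ox,\ov)=\sub\big(\sm\d^2 f(\ox,\ov)\big)$ as set-valued mappings. Since all the standing hypotheses of that theorem are among our assumptions, invoking it closes the argument: (i) $\Leftrightarrow$ (ii), and \eqref{gdpd} holds. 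Because we have already produced (i) from Theorem~\ref{tedp}, both equivalent conditions in fact hold, so the corollary is proved.

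The only genuine content beyond citation is the verification that prox-regularity plus subdifferential continuity (or mere local lower semicontinuity) upgrades the subgradient $\ov\in\sub f(\ox)$ to a \emph{proximal} subgradient, so that the hypothesis of Theorem~\ref{tedp} is met; I expect this to be the main, though still routine, obstacle, and it is handled by the one-line specialization of the prox-regularity inequality described above. Everything else is a direct application of results stated earlier in the paper (Theorem~\ref{tedp}) or of the standard proto-differentiability machinery of \cite{rw,pr,r90}, so no further computation is needed.
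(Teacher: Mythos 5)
Your proposal is correct and follows essentially the same route as the paper: verify $\ov\in\sub^p f(\ox)$ from prox-regularity, apply Theorem~\ref{tedp} to get (i), and cite \cite[Theorem~13.40]{rw} for the equivalence with (ii) and formula \eqref{gdpd}. The only cosmetic quibble is in your aside on discharging the level-set restriction $f(x)\le f(\ox)+\ve$: the cleaner reason it is harmless is that for $x$ near $\ox$ with $f(x)>f(\ox)+\ve$ the proximal inequality \eqref{proxs} holds trivially, rather than an appeal to lower semicontinuity.
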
 
\begin{proof}Note  that $\ov\in \sub^p f(\ox)$ since $f$ is prox-regular at $\ox$ for $\ov$. 
Employing now   Theorem~\ref{tedp} gives us (i). The equivalence between (i) and (ii) 
and the validity of \eqref{gdpd} come from \cite[Theorem~13.40]{rw}.
\end{proof}

\section{Variational Properties of Parabolic Subderivatives}\sce \label{sect04}

This section is devoted to second-order analysis of parabolic subderivatives of  extended-real-valued functions that are locally Lipschitz continuous relative to their domains.
We pay special attention to functions that are expressed as a composition of a convex function and a twice differentiable function. 
We begin with the following result that gives us sufficient conditions for finding the domain of the parabolic subderivative. 
 
\begin{Proposition}[properties of  parabolic subderivatives]\label{psd}
Let $f: \X \to \oR$ be finite at $\ox$  and let  $f$ be Lipschitz continuous around $\ox$ relative to its domain with constant $\ell\in \R_+$. 
Assume that $w \in T_{\ss \dom f}(\ox)$ and that $f$ is parabolic epi-differentiable at $\ox$ for  $w$. Then  the following conditions hold:
\begin{itemize}[noitemsep,topsep=1pt]
\item [\bf{(i)}]    $\dom \d^2 f(\bar x)(w\verl \cdot) = T_{\ss \dom f}^2 (\ox ,  w)$;
\item [\bf{(ii)}] $\dom f$ is parabolically derivable at $\ox$ for $w$.
\end{itemize}
\end{Proposition}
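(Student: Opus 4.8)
The plan is to establish both parts together by exploiting the Lipschitz continuity of $f$ relative to its domain to convert statements about the parabolic subderivative $\d^2 f(\ox)(w\verl\cdot)$ into statements about the second-order tangent set $T^2_{\ss\dom f}(\ox,w)$, and vice versa. For part (i), I would prove the two inclusions separately. The inclusion $\dom \d^2 f(\ox)(w\verl\cdot)\subset T^2_{\ss\dom f}(\ox,w)$ is the easy direction: if $z\in\dom\d^2 f(\ox)(w\verl\cdot)$, then $\d^2 f(\ox)(w\verl z)<\infty$, so along the defining $\liminf$ there are $t_k\dn 0$ and $z_k\to z$ with $f(\ox+t_kw+\tfrac12 t_k^2 z_k)$ finite (otherwise the quotient would be $+\infty$), hence $\ox+t_kw+\tfrac12 t_k^2 z_k\in\dom f$, which is exactly the membership $z\in T^2_{\ss\dom f}(\ox,w)$. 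For the reverse inclusion, take $z\in T^2_{\ss\dom f}(\ox,w)$; then there are $t_k\dn 0$ and $z_k\to z$ with $x_k:=\ox+t_kw+\tfrac12 t_k^2 z_k\in\dom f$. Now I want to bound the difference quotient $\Delta$ defining the parabolic subderivative. Here is where Lipschitz continuity relative to $\dom f$ enters: since both $x_k$ and (for $k$ large) $\ox+t_kw$ — wait, $\ox+t_kw$ need not lie in $\dom f$. So instead I would compare $f(x_k)$ with $f(\ox)$ directly, writing $f(x_k)-f(\ox)-t_k\d f(\ox)(w)$ and using that $\d f(\ox)(w)$ is finite (Proposition~\ref{dfs}) together with a careful estimate; the cleanest route is to note that parabolic epi-differentiability is \emph{assumed}, so $\dom\d^2 f(\ox)(w\verl\cdot)\neq\emptyset$, pick a reference $\bar z$ in it, and then show any $z\in T^2_{\ss\dom f}(\ox,w)$ also lies in the domain by a Lipschitz comparison of $f$ at $\ox+t_kw+\tfrac12 t_k^2 z_k$ against $\ox+t_kw+\tfrac12 t_k^2 \bar z_k$, both of which can be taken in $\dom f$ — giving $|\d^2 f(\ox)(w\verl z)-\d^2 f(\ox)(w\verl\bar z)|\le \ell\|z-\bar z\|$ along suitable sequences, hence finiteness.

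For part (ii), recall that $\dom f$ is parabolically derivable at $\ox$ for $w$ means that $T^2_{\ss\dom f}(\ox,w)\neq\emptyset$ and that for each $u$ in this set there is an arc $\xi:[0,\ve]\to\dom f$ with $\xi(0)=\ox$, $\xi'_+(0)=w$, $\xi''_+(0)=u$. Nonemptiness follows from part (i) together with the standing assumption that $f$ is parabolically epi-differentiable at $\ox$ for $w$ (which forces $\dom\d^2 f(\ox)(w\verl\cdot)\neq\emptyset$). For the arc, I would use the parabolic epi-differentiability once more: given $u\in T^2_{\ss\dom f}(\ox,w)=\dom\d^2 f(\ox)(w\verl\cdot)$ and any sequence $t_k\dn 0$, there is $z_k\to u$ realizing the limit in \eqref{pepi}; in particular the points $\ox+t_kw+\tfrac12 t_k^2 z_k$ eventually lie in $\dom f$, and a standard interpolation/patching construction (as in the proof of \cite[Proposition~13.59]{rw} or the analogous constructions in \cite{mms2}) produces a genuine arc $\xi$ with the prescribed first and second right derivatives. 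I expect to invoke the fact that realizing second-order difference quotients by a continuous arc is a routine matter once one has the discrete sequences, again leaning on the Lipschitz property to control the values of $f$ along the interpolating arc.

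The main obstacle is the reverse inclusion in part (i): turning "$\ox+t_kw+\tfrac12 t_k^2 z_k\in\dom f$ for \emph{some} sequences" into "$\d^2 f(\ox)(w\verl z)<\infty$", because the parabolic subderivative involves the perturbation $f(\ox+tw)$-type anchoring through $t\,\d f(\ox)(w)$, not through a point of $\dom f$, so one cannot apply the relative Lipschitz estimate between $\ox+t_kw$ and $x_k$ directly. The way around this is exactly to compare $x_k$ with a perturbed point $\ox+t_kw+\tfrac12 t_k^2\bar z_k\in\dom f$ coming from the assumed-nonempty domain of the parabolic subderivative, obtaining $|f(x_k)-f(\ox+t_kw+\tfrac12 t_k^2\bar z_k)|\le \ell\|x_k-(\ox+t_kw+\tfrac12 t_k^2\bar z_k)\|=\tfrac12\ell t_k^2\|z_k-\bar z_k\|$, dividing by $\tfrac12 t_k^2$, and passing to the limit; this shows $z$ and $\bar z$ are in the domain simultaneously. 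Once this is set up the rest is bookkeeping, and I would present it in that order: (i) easy inclusion, (i) hard inclusion via Lipschitz comparison against a reference direction, then (ii) nonemptiness from (i) plus parabolic epi-differentiability, then (ii) arc construction.
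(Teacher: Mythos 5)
Your proposal is correct and follows essentially the same route as the paper: the easy inclusion in (i) by definition, the hard inclusion by comparing against a reference point $z_w$ in the (assumed nonempty) domain of the parabolic subderivative via the Lipschitz estimate $\d^2 f(\ox)(w\verl z)\le \d^2 f(\ox)(w\verl z_w)+\ell\|z-z_w\|$, and nonemptiness in (ii) from (i) plus parabolic epi-differentiability. The only cosmetic difference is in the arc construction for (ii), where the paper simply cites \cite[Example~13.62(b)]{rw} to get parabolic derivability of $\epi f$ and then projects onto $\dom f$, rather than carrying out the interpolation by hand.
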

\begin{proof} Since $w\in T_{\ss \dom f}(\ox)$, we conclude from Proposition~\ref{dfs} that $\d f(\ox)(w)$ is finite. To prove (i), observe first that by definition, we always have the inclusion 
\begin{equation}\label{domps}
\dom \d^2 f(\bar x)(w\verl \cdot) \subset T_{\ss \dom f}^2 (\ox, w).
\end{equation}
 To obtain the opposite inclusion, take  $z \in T_{\ss \dom f}^2 (\ox,  w)$. This tells us that there exist sequences $t_k\dn 0$ and $z_k\to z$ 
 so that $\ox + t_kw + \frac{1}{2} t_k^2 z_k  \in \dom f $. Since $f$ is parabolically epi-differentiable at $\ox$ for $w$, we have $\dom \d^2 f(\bar x)(w\verl \cdot) \neq\emptyset$.
 Thus there exists a $z_w\in \X$ such that $\d^2 f(\bar x)(w\verl z_w) <\infty$.
 Moreover, corresponding to the sequence $t_k$, we find another sequence $z'_k\to z_w$ such that 
 $$
\d^2 f(\ox)(w\verl z_w)=\lim_{k\to \infty} \dfrac{f(\ox+t_kw+\frac{1}{2}t_k^2 z'_k)-f(\ox)-t_k\d f(\ox)(w)}{\frac{1}{2}t_k^2}.
$$
Since $\d^2 f(\bar x)(w\verl z_w) <\infty$, we can assume without loss of generality that $\ox+t_kw+\frac{1}{2}t_k^2 z'_k\in \dom f$ for all $k\in \N$.
Using these together with the Lipschitz continuity of $f$ around $\ox$ relative to its domain,  we have for all $k$ sufficiently large  that 
\begin{eqnarray*}
\frac{f(\ox+t_k w+\frac{1}{2} t_k^2 z_k)-f(\ox)-t_k \d f(\ox)(w)} {\frac{1}{2}t_k^2}&=&\frac{f(\ox+t_k w+\frac{1}{2}t_k^2 z'_k)-f(\ox)-t_k\d f(\ox)(w)}{\frac{1}{2}t_k^2} \\ 
&&+ \frac{f(\ox+t_k w+\frac{1}{2}t_k ^2 z_k) -f(\ox+t_k w+\frac{1}{2}t_k ^2 z'_k)}{\frac{1}{2}t_k^2}\\
&\leq & \frac{f(\ox+t_k w+\frac{1}{2}t_k^2 z'_k)-f(\ox)-t_k \d f(\ox)(w)}{\sm t_k^2}\\ 
&&+ \ell   \| z_k - z'_{k} \|.
\end{eqnarray*}
Passing to the limit results in the inequality 
\begin{equation}\label{lipps} 
 \d^2 f(\bar x)(w\verl z) \leq \d^2 f(\bar x)(w\verl z_{w}) + \ell \|z - z_{w} \|,
\end{equation}
 which in turn yields $\d^2 f(\bar x)(w\verl z) <\infty$, i.e., $z\in \dom \d^2 f(\bar x)(w\verl \cdot)$. This justifies the opposite inclusion in \eqref{domps}
 and hence proves (i). 
 
 Turning now to (ii), we conclude from \eqref{domps} and the parabolic epi-differentiability of $f$ at $\ox$ for $w$ that 
the second-order tangent set $ T_{\ss \dom f}^2 (\ox, w)$ is nonempty. Moreover, it follows from \cite[Example~13.62(b)]{rw} that 
the parabolic epi-differentiability of $f$ at $\ox$ for $w$ yields the parabolic derivability of $\epi f$ at $(\ox,f(\ox))$ for $(w,\d f(\ox)(w))$. 
The latter clearly enforces the same property for $\dom f$ at $\ox$ for $w$ and hence completes the proof.
\end{proof} 

It is important to notice the parabolic epi-differentiability of $f$ in Proposition~\ref{psd} is essential 
to ensure that condition (i) therein, namely the characterization of the domain of the parabolic subderivative, is satisfied.
Indeed, as mentioned in the proof of this proposition, inclusion \eqref{domps} always holds. If the latter condition fails, this 
inclusion can be strict. For example, the function $f$ from Example~\ref{ex01} is not parabolic epi-differentiable at $\ox=(0,0)$
for any vector $w=(w_1,w_2)\in  K_f (\ox,\ov)$ with $w_1\neq 0$ since $\dom  \d^2 f(\bar x)(w\verl \cdot)=\emptyset$. On the other hand, 
we have $\dom f=\R^2$ and thus $T_{\ss \dom f}^2 (\ox, w)=\R^2$ for any such a vector $w\in K_f (\ox,\ov)$, and so  condition (i) in Proposition~\ref{psd}  fails. 

Given a function $f:\X\to \oR$ finite at $\ox$,  in the rest of this paper, we mainly focus on the case when  this function has a 
representation of the form 
\begin{equation}\label{CF}
f (x)= (g \circ F)(x)\quad  \mbox{for all}\;\;  x\in {\cal O},
\end{equation}
where ${\cal O}$ is a neighborhood of $\ox$ and where the functions $F$ and $g$ are satisfying the following conditions:
\begin{itemize}[noitemsep,topsep=2pt]
\item $F:\X \to \Y$ is twice differentiable at $\ox$;
\item $g: \Y \to \oR$ is  proper, l.s.c.,\ convex,  and  Lipschitz continuous around $F(\ox)$ relative to its domain with constant $\ell\in \R_+$.
\end{itemize}
It is not hard to see that the imposed assumptions on  $g$ from representation \eqref{CF} implies that 
$\dom g$ is locally closed around $F(\ox)$, namely for some $\ve>0$ the set $(\dom g)\cap \B_\ve(F(\ox))$ is closed. 
Taking the neighborhood ${\cal O}$ from  \eqref{CF},  we obtain  
\begin{equation}\label{CS}
(\dom f)\cap {\cal O}= \big\{ x \in {\cal O} | \: F(x) \in \dom g\big \}.
\end{equation}
 It has been well understood that  the second-order variational analysis of the composite 
form \eqref{CF} requires a certain qualification condition. The following definition provides the one we utilize in this paper.
\begin{Definition}[metric subregularity constraint qualification] \label{mscq} Assume that the function $f:\X\to \oR$ has representation \eqref{CF}
around $\ox\in \dom f$. We say that the metric subregularity constraint qualification holds  for the constraint set \eqref{CS} at $\ox$ if 
 there exist a constants $\kappa \in \R_+$ and a neighborhood $U$ of $\ox$  such that
\begin{equation}\label{mscq1}
d(x , \dom f) \leq \kappa \: d(F(x) , \dom g) \quad \mbox{for all}\;\; x \in U.
\end{equation}
\end{Definition}

By definition, the metric subregularity constraint qualification 
for the constraint set \eqref{CS} at $\ox$ amounts to the  metric subregularity of the mapping $x\mapsto F(x)- \dom g $ at $(\ox,0)$.
 The more traditional and well known constraint  qualification for \eqref{CF} is 
\begin{equation*}
\partial^\infty g\big(F(\ox)\big)\cap\ker\nabla F(\ox)^*=\{0\},
\end{equation*}
where $\partial^\infty\ph(\ox)$ stands for the {\em singular subdifferential} of $\ph\colon\R^n\to\oR$ at $\ox\in\dom\ph$ defined by
\begin{equation*}\label{ss}
\partial^\infty\ph(\ox):=\big\{v\in\R^n\big|\;(v,0)\in N_{{\scriptsize\epi\ph}}\big(\ox,\ph(\ox)\big)\big\}.
\end{equation*}
By the coderivative criterion from \cite[Theorem~3.3]{mor18}, the latter  constraint qualification amounts to the metric regularity of the mapping $(x,\al)\mapsto (F(x),\al)-\epi g$ around 
$\big((\ox,g(F(\ox))),(0,0)\big)$. Since $g$ is convex, by \cite[Proposition~1.25]{mor18}, the aforementioned constraint qualification can be equivalently written as 
\begin{equation}\label{gf07}
N_{\ss \dom g} \big(F(\ox)\big)\cap\ker\nabla F(\ox)^*=\{0\},
\end{equation}
which by the coderivative criterion from \cite[Theorem~3.3]{mor18} is equivalent to the metric regularity of the mapping $x\mapsto F(x)- \dom g $ around $(\ox,0)$. 
Thus, for the composite form \eqref{CF}, the metric regularity of $(x,\al)\mapsto (F(x),\al)-\epi g$ around $\big((\ox,g(F(\ox))),(0,0)\big)$ amounts to that of 
$x\mapsto F(x)- \dom g $ around $(\ox,0)$. This may not be true if the metric regularity is replaced with the metric subregularity; see \cite[Proposition~ 3.1]{mms1} and \cite[Example~3.3]{mms1} for more details
and discussions about these conditions. So, it is worth reiterating that instead of using the metric subregularity of the {\em epigraphical} mapping, 
our second-order variational analysis  will be carried out under 
 a weaker (and simpler) metric subregularity of the {\em domain} mapping.
 
 As observed recently in \cite{mms1}, \eqref{mscq1} suffices to conduct first- and second-order 
variational analysis of \eqref{CF} when the convex function $g$ therein is merely  piecewise linear-quadratic. In what follows, we will show using a different approach that 
such results can be achieved for \eqref{CF} as well. We continue  our analysis by recalling the following first- and second-order chain rules, obtained recently in \cite{mms1,mms2}.

\begin{Proposition}[first- and second-order chain rules]\label{fsch} Let  $f:\X\to \oR$ have the composite representation \eqref{CF} at $\ox\in \dom f$ and $\ov\in \sub f(\ox)$ and let 
the metric subregularity constraint qualification hold  for the constraint set \eqref{CS} at $\ox$. Then the following hold:
\begin{itemize}[noitemsep,topsep=1pt]
\item [\bf{(i)}] for any $w\in \X$, the following subderivative  chain rule for  $f$ at $\ox$ holds:
$$
\d f(\ox)(w)=\d g(F(\ox))( \nabla F(\ox) w );
$$
\item [\bf{(ii)}] we have the chain rules
$$
\sub^p f(\ox) = \sub f(\ox) = \nabla F(\ox)^* \sub g(F(\ox))\quad \mbox{and}\quad T_{\ss \dom f}(\ox) =\big\{w\in \X |\; \nabla F(\ox) w \in T_{\ss \dom g}(F(\ox)) \big\}.
$$
\end{itemize}
If, in addition, $w\in T_{\ss \dom f}(\ox)$  and the function $g$ from \eqref{CF} is parabolically  epi-differentiable  at $F(\ox)$ for $ \nabla F(\ox)w$,  then we have
\begin{equation}\label{chrs}
z\in T^2_{\ss \dom f}(\ox,  w) \iff \nabla F(\ox)z +\nabla^2 F(\bar x)(w,w) \in T^2_{\ss \dom g}\big (F(\bar x), \nabla F(\ox)w \big).
\end{equation}  
Moreover, $\dom f$ is parabolically derivable at $\ox$ for $w$.
\end{Proposition}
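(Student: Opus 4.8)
The plan is to deduce Proposition~\ref{fsch} from the already-established facts about composite functions, chiefly the first-order chain rules in parts (i) and (ii) and the abstract results of Proposition~\ref{psd} and Proposition~\ref{dfs}. Parts (i) and (ii) are quoted as known (from \cite{mms1,mms2}), so the genuinely new content is the equivalence \eqref{chrs} describing the second-order tangent set $T^2_{\ss\dom f}(\ox,w)$ and the parabolic derivability of $\dom f$ at $\ox$ for $w$. The strategy for \eqref{chrs} is to reduce everything to the constraint set $\dom f$ and the set $\dom g$ via the identification \eqref{CS}, and then apply the known second-order chain rule for second-order tangent sets to constraint systems of the form $\{x\mid F(x)\in D\}$ under the metric subregularity constraint qualification --- this is exactly the kind of result established in \cite{mms2} (and in the paper's own framework for constraint systems). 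The key observation making the hypotheses match is that, by the second bullet assumption on $g$ in \eqref{CF}, $g$ is Lipschitz continuous around $F(\ox)$ relative to its domain, so $\dom g$ is locally closed around $F(\ox)$ and \eqref{CS} holds.

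Concretely, I would proceed as follows. First, record that by part (ii) we have $\ov\in\sub^p f(\ox)=\nabla F(\ox)^*\sub g(F(\ox))$ and the first-order tangent cone formula $T_{\ss\dom f}(\ox)=\{w\mid \nabla F(\ox)w\in T_{\ss\dom g}(F(\ox))\}$; in particular for the given $w\in T_{\ss\dom f}(\ox)$ we have $\nabla F(\ox)w\in T_{\ss\dom g}(F(\ox))$, so the parabolic subderivative $\d^2 g(F(\ox))(\nabla F(\ox)w\verl\cdot)$ and the parabolic epi-differentiability hypothesis on $g$ are meaningful. Second, apply Proposition~\ref{psd} to the convex function $g$ at $F(\ox)$ for the tangent vector $\nabla F(\ox)w$: since $g$ is Lipschitz relative to its domain and parabolically epi-differentiable there, we get $\dom\d^2 g(F(\ox))(\nabla F(\ox)w\verl\cdot)=T^2_{\ss\dom g}(F(\ox),\nabla F(\ox)w)$, that this set is nonempty, and that $\dom g$ is parabolically derivable at $F(\ox)$ for $\nabla F(\ox)w$. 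Third, establish \eqref{chrs} by a direct two-inclusion argument at the level of sequences: given $z$ with $\nabla F(\ox)z+\nabla^2F(\ox)(w,w)\in T^2_{\ss\dom g}(F(\ox),\nabla F(\ox)w)$, one uses the definition of the second-order tangent set together with a second-order Taylor expansion $F(\ox+t_kw+\tfrac12 t_k^2 z_k)=F(\ox)+t_k\nabla F(\ox)w+\tfrac12 t_k^2(\nabla F(\ox)z_k+\nabla^2F(\ox)(w,w))+o(t_k^2)$ and the metric subregularity estimate \eqref{mscq1} to produce a correction term (of order $o(t_k^2)/t_k^2\to 0$ after dividing) moving the perturbed point back into $\dom f$, which shows $z\in T^2_{\ss\dom f}(\ox,w)$; the reverse inclusion is the easy direction, obtained by pushing forward through $F$ via the same Taylor expansion. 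Fourth, deduce parabolic derivability of $\dom f$ at $\ox$ for $w$: nonemptiness of $T^2_{\ss\dom f}(\ox,w)$ follows from \eqref{chrs} together with nonemptiness of $T^2_{\ss\dom g}(F(\ox),\nabla F(\ox)w)$ from Step~2, and given $z\in T^2_{\ss\dom f}(\ox,w)$ one transports a realizing arc $\eta$ in $\dom g$ (with $\eta(0)=F(\ox)$, $\eta'_+(0)=\nabla F(\ox)w$, $\eta''_+(0)=\nabla F(\ox)z+\nabla^2F(\ox)(w,w)$), provided by parabolic derivability of $\dom g$, back through the metric subregularity estimate to build an arc $\xi$ in $\dom f$ with $\xi(0)=\ox$, $\xi'_+(0)=w$, $\xi''_+(0)=z$ --- the same $o(t^2)$ correction as before.

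I expect the main obstacle to be the metric-subregularity-based ``pull-back'' argument in Step~3 (and its arc version in Step~4): one must carefully control the correction term that \eqref{mscq1} provides. The estimate gives $d(x_t,\dom f)\le\kappa\, d(F(x_t),\dom g)$ for $x_t=\ox+tw+\tfrac12 t^2 z_t$; one needs that $d(F(x_t),\dom g)=o(t^2)$, which requires knowing that $F(x_t)$ is within $o(t^2)$ of $\dom g$ --- this uses that $\nabla F(\ox)z+\nabla^2F(\ox)(w,w)\in T^2_{\ss\dom g}(F(\ox),\nabla F(\ox)w)$ and the definition of that second-order tangent set, combined with the Taylor remainder, and a nontrivial point is to select the sequence $z_t$ along which the second-order tangent membership is witnessed so that it is compatible with the one realizing $z\in T^2_{\ss\dom f}$. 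Then, choosing a nearest point $\widehat x_t\in\dom f$ with $\|\widehat x_t-x_t\|\le\kappa\,d(F(x_t),\dom g)=o(t^2)$ and writing $\widehat x_t=\ox+tw+\tfrac12 t^2(z_t+2(\widehat x_t-x_t)/t^2)$, the correction $2(\widehat x_t-x_t)/t^2\to 0$, so $\widehat x_t$ witnesses $z\in T^2_{\ss\dom f}(\ox,w)$. This is a standard but delicate $\varepsilon$-$\delta$ bookkeeping argument; everything else (the first-order chain rules, the reduction via \eqref{CS}, and the abstract consequences of Proposition~\ref{psd}) is comparatively routine, and indeed much of this machinery is exactly what was developed in \cite{mms2} for constraint systems, so the proof is largely a matter of assembling those pieces in the present setting.
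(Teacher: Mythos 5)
Your proposal is correct and follows essentially the same route as the paper: parts (i) and (ii) are imported from \cite{mms1}, the tangent-cone formula comes from Proposition~\ref{dfs} together with (i), and Proposition~\ref{psd} supplies the parabolic derivability of $\dom g$ at $F(\ox)$ for $\nabla F(\ox)w$. The only difference is that where the paper simply cites \cite[Theorem~4.5]{mms2} for the equivalence \eqref{chrs} and the parabolic derivability of $\dom f$, you unpack that citation and reprove it via the Taylor-expansion push-forward and the metric-subregularity pull-back with its $o(t^2)$ correction, which is precisely the argument behind the cited result.
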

\begin{proof} The subderivative chain rule  in (i) was established recently in \cite[Theorem~ 3.4]{mms1}.
The subdifferential chain rule in (ii) was taken from  \cite[Theorem~ 3.6]{mms1}. 
As mentioned in Section~\ref{sect02}, the inclusion $\sub^p f(\ox)\subset \sub f(\ox)$ always holds. The opposite inclusion can be 
justified using the aforementioned subdifferential chain rule and the convexity of $g$; see  \cite[Theorem~4.4]{mms1} for a similar result.
The chain rule for the tangent cone to $\dom f$ at $\ox$ results from Proposition~\ref{dfs} and the subderivative 
 chain rule for $f$ at $\ox$ in (i). If in addition $w\in T_{\ss \dom f}(\ox)$ and $g$ is parabolically  epi-differentiable  at $F(\ox)$ for $  \nabla F(\ox)w$, then it follows from Proposition~\ref{psd} that $\dom g$ is parabolically derivable at 
 $F(\ox)$ for $ \nabla F(\ox)w$.  Appealing now to \cite[Theorem~4.5]{mms2} implies that 
 $\dom f$ is parabolically derivable at $\ox$ for $w$. Finally, the chain  rule \eqref{chrs}
 was taken from \cite[Theorem~4.5]{mms2}. This completes the proof.
\end{proof}

We continue by establishing a chain rule for the parabolic subderivative, which is important for  our developments in the next section.

\begin{Theorem}[chain rule for parabolic subderivatives]\label{sochpar}
Let  $f:\X\to \oR$ have the composite representation \eqref{CF} at $\ox\in \dom f$ and $w\in T_{\ss \dom f}(\ox)$ and let 
the metric subregularity constraint qualification hold  for the constraint set \eqref{CS} at $\ox$. 
Assume that  the function $g$ from \eqref{CF} is parabolically  epi-differentiable  at $F(\ox)$ for  $ \nabla F(\ox)w$. Then the following conditions are satisfied:
\begin{itemize}[noitemsep,topsep=0pt]
\item [\bf{(i)}] for any $z\in \X$ we have 
\begin{equation}\label{sochpar1}
\d^2 f(\bar x)(w\verl z) = \d^2 g(F(\ox))( \nabla F(\ox) w \verlm \nabla F(\ox) z + \nabla^2 F(\ox)(w,w));
\end{equation}  
\item [\bf{(ii)}]  the domain of the parabolic subderivative of $f$ at $\ox$ for $w$ is given by
$$
\dom \d^2 f(\bar x)(w\verl \cdot) = T_{\ss \dom f}^2 (\ox ,  w);
$$
\item [\bf{(iii)}]  $f$ is parabolically epi-differentiable at $\ox$ for $w $.
\end{itemize}
\end{Theorem}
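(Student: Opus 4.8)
The plan is to reduce everything to the parabolic subderivative formula in part (i), and then obtain (ii) and (iii) by combining that formula with the results already available for $g$ (parabolic epi-differentiability) and for $\dom f$ (via Proposition~\ref{fsch} and Proposition~\ref{psd}).

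For part (i), I would fix $z \in \X$ and work directly from the definition of $\d^2 f(\ox)(w\verl z)$ as a $\liminf$ over $t\dn 0$ and $z'\to z$. Writing $y := F(\ox)$ and $u := \nabla F(\ox)z + \nabla^2 F(\ox)(w,w)$, the key is the second-order Taylor expansion of $F$ at $\ox$: for $t>0$ small and $z'$ near $z$,
\begin{equation*}
F\big(\ox + tw + \tfrac12 t^2 z'\big) = y + t\,\nabla F(\ox)w + \tfrac12 t^2\big(\nabla F(\ox)z' + \nabla^2F(\ox)(w,w)\big) + o(t^2).
\end{equation*}
Thus, along any sequence $t_k \dn 0$, $z_k \to z$, the point $F(\ox + t_k w + \tfrac12 t_k^2 z_k)$ has the form $y + t_k \nabla F(\ox)w + \tfrac12 t_k^2 u_k$ with $u_k \to u$. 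Using the Lipschitz continuity of $g$ around $y$ relative to its domain, together with the first-order chain rule $\d f(\ox)(w) = \d g(y)(\nabla F(\ox)w)$ from Proposition~\ref{fsch}(i), the $o(t_k^2)$ error (and the discrepancy between $u_k$ and the ``exact'' second-order increment) gets absorbed, so the difference quotients defining $\d^2 f(\ox)(w\verl z)$ and $\d^2 g(y)(\nabla F(\ox)w \verlm u)$ have the same $\liminf$. The subtle point here, and the one I expect to be the main obstacle, is handling the domain constraint: the difference quotient for $f$ only sees points $\ox + tw + \tfrac12 t^2 z'$ that land in $\dom f$, whereas the one for $g$ sees points in $\dom g$; one needs the metric subregularity constraint qualification \eqref{mscq1} to pass between these, producing a correction of order $O(t^2)$ to $z'$ that again is absorbed by the Lipschitz property of $g$. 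This is the same mechanism used in the proof of Proposition~\ref{psd}, and I would lean on \eqref{CS} and \eqref{mscq1} in exactly that spirit; one inequality ($\le$) typically comes by feeding a good recovery sequence for $g$ through a metric-subregularity projection, and the reverse ($\ge$) by pushing an arbitrary sequence for $f$ forward through $F$.

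For part (ii), once \eqref{sochpar1} is established, the domain of $\d^2 f(\ox)(w\verl\cdot)$ is the set of $z$ for which $\nabla F(\ox)z + \nabla^2F(\ox)(w,w) \in \dom \d^2 g(y)(\nabla F(\ox)w\verl\cdot)$. By the parabolic epi-differentiability of $g$ at $y$ for $\nabla F(\ox)w$ and Proposition~\ref{psd}(i) applied to $g$, this domain equals $T^2_{\ss\dom g}(y,\nabla F(\ox)w)$; hence $z \in \dom \d^2 f(\ox)(w\verl\cdot)$ iff $\nabla F(\ox)z + \nabla^2F(\ox)(w,w) \in T^2_{\ss\dom g}(y,\nabla F(\ox)w)$, which by the chain rule \eqref{chrs} in Proposition~\ref{fsch} is exactly $z \in T^2_{\ss\dom f}(\ox,w)$. (I should note that applying Proposition~\ref{psd} to $g$ requires $\nabla F(\ox)w \in T_{\ss\dom g}(y)$, which holds by the tangent-cone chain rule in Proposition~\ref{fsch}(ii) since $w\in T_{\ss\dom f}(\ox)$.)

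For part (iii), I need to verify the two conditions in the definition of parabolic epi-differentiability of $f$ at $\ox$ for $w$: nonemptiness of $\dom \d^2 f(\ox)(w\verl\cdot)$, and the $\liminf$-is-attained-as-$\lim$ recovery condition \eqref{pepi}. Nonemptiness follows from (ii) together with the fact that $T^2_{\ss\dom f}(\ox,w)\ne\emptyset$, which is part of the conclusion of Proposition~\ref{fsch} (parabolic derivability of $\dom f$ at $\ox$ for $w$); alternatively it follows from \eqref{chrs} and the nonemptiness of $T^2_{\ss\dom g}(y,\nabla F(\ox)w)$, itself a consequence of $g$ being parabolically epi-differentiable. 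For the recovery condition, given $z\in\X$ and $t_k\dn 0$, I would use the parabolic epi-differentiability of $g$ to get $u_k \to \nabla F(\ox)z + \nabla^2F(\ox)(w,w)$ realizing the limit in \eqref{pepi} for $g$, then solve (approximately, using the Taylor expansion of $F$ and a metric-subregularity correction as in part (i)) for $z_k \to z$ with $F(\ox + t_kw + \tfrac12 t_k^2 z_k)$ matching $y + t_k\nabla F(\ox)w + \tfrac12 t_k^2 u_k$ up to $o(t_k^2)$; the Lipschitz property of $g$ relative to its domain then shows the corresponding difference quotients for $f$ converge to $\d^2 f(\ox)(w\verl z)$, giving \eqref{pepi}. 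This completes the proof.
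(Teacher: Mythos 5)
Your proposal is correct and follows essentially the same route as the paper: establish the identity (i) by comparing the parabolic difference quotients of $f$ and $g$ via the Taylor expansion of $F$, the first-order chain rule of Proposition~\ref{fsch}(i), and the Lipschitz continuity of $g$ relative to its domain, then read off (ii) from Proposition~\ref{psd}(i) applied to $g$ together with \eqref{chrs}, and (iii) from the recovery sequences for $g$. The only cosmetic difference is that you invoke the metric subregularity estimate \eqref{mscq1} directly to correct the recovery sequence, whereas the paper channels that same mechanism through the already-packaged second-order tangent set chain rule \eqref{chrs} and the parabolic derivability of $\dom f$ from Proposition~\ref{fsch}, which supplies the sequence $z_k\to z$ with $\ox+t_kw+\frac{1}{2}t_k^2z_k\in\dom f$.
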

\begin{proof} Pick $z\in \X$ and set $u:= \nabla F(\ox) z + \nabla^2 F(\ox)(w,w)$. We prove (i)-(iii) in a parallel way. Assume that  $z\notin T_{\ss \dom f}^2 (\ox ,  w)$. 
As mentioned in the proof of Proposition~\ref{psd}, inclusion \eqref{domps}
always holds. This implies that $\d^2 f(\bar x)(w\verl z)=\infty$. On the other hand,  by  \eqref{chrs} we get $u \notin T^2_{\ss \dom g}\big (F(\bar x), \nabla F(\ox)w \big)$.
Employing  Proposition~\ref{psd}(i) for the function $g$ and $\nabla F(\ox)w\in T_{\ss \dom g}(F(\ox))$  gives us  
\begin{equation}\label{domgp}
\dom  \d^2 g(F(\ox))( \nabla F(\ox) w \verlm\cdot)= T_{\ss \dom g}^2 (F(\ox) ,  \nabla F(\ox)w).
\end{equation}
Combining these tells us that 
$
\d^2 g(F(\ox))( \nabla F(\ox) w \verlm u)=\infty,
$
which in turn justifies \eqref{sochpar1} for every $z\notin T_{\ss \dom f}^2 (\ox ,  w)$. Consider an arbitrary sequence  $t_k\dn 0$ and set $z_k:=z$ for all $k\in \N$. Then 
we have 
\begin{eqnarray*}
\d^2 f(\bar x)(w\verl z) &\le & \liminf_{k\to \infty}  \dfrac{f(\ox+t_kw+\frac{1}{2}t_k^2 z_k)-f(\ox)-t_k\d f(\ox)(w)}{\frac{1}{2}t_k^2}\\
&\le&  \limsup_{k\to \infty}  \dfrac{f(\ox+t_kw+\frac{1}{2}t_k^2 z_k)-f(\ox)-t_k\d f(\ox)(w)}{\frac{1}{2}t_k^2}\\
&\le&\infty=\d^2 f(\bar x)(w\verl z),
\end{eqnarray*} 
which in turn justifies \eqref{pepi} for all $z\notin T_{\ss \dom f}^2 (\ox ,  w)$. 
 
 Since $g$ is parabolically  epi-differentiable  at $F(\ox)$ for  $ \nabla F(\ox)w$,  Proposition~\ref{psd}(ii) 
 tells us that $\dom g$ is parabolically derivable at $F(\ox)$ for  $ \nabla F(\ox)w$. We conclude from Proposition~\ref{fsch} that $\dom f$ is parabolically derivable at $\ox$ for $w$. 
 In particular, we have 
 \begin{equation}\label{pdfps}
 T_{\ss \dom f}^2 (\ox,  w)\neq \emptyset.
 \end{equation}
 Pick now  $z \in T_{\ss \dom f}^2 (\ox,  w)$ and then consider an arbitrary sequence  $t_k\dn 0$. 
Thus, by definition, for the aforementioned sequence $t_k$, we find a sequence $z_k\to z$ as $k\to \infty$ such that 
\begin{equation}\label{seqz}
x_k:=\ox+t_kw+\frac{1}{2}t_k^2 z_k\in \dom f\quad \mbox{for all}\;\;k\in \N.
\end{equation}
Moreover, since $g$ is parabolically epi-differentiable at $F(\ox)$ for $\nabla F(\ox) w$,  we find a sequence 
$u_k\to u $ such that 
\begin{equation}\label{peg}
\d^2 g(F(\ox))( \nabla F(\ox) w \verlm u)= \lim_{k\to \infty}  \dfrac{g(F(\ox)+t_k\nabla F(\ox)w+\frac{1}{2}t_k^2 u_k)-g(F(\ox))-t_k\d g(F(\ox))(\nabla F(\ox)w)}{\frac{1}{2}t_k^2}.
\end{equation}
It follows  from  \eqref{chrs} that  $u\in T^2_{\ss \dom g}\big (F(\bar x), \nabla F(\ox)w \big)$. Combining this with  \eqref{domgp} tells us that    $\d^2 g(F(\ox))( \nabla F(\ox) w \verlm u)<\infty$.
This implies that $y_k:=F(\ox)+t_k\nabla F(\ox)w+\frac{1}{2}t_k^2 u_k\in \dom g$ for all $k$ sufficiently large. 
Remember that $g$ is Lipschitz continuous around $F(\ox)$ relative to its domain with constant $\ell$. Using this together with  Proposition~\ref{fsch}(i),  \eqref{seqz}, and \eqref{peg}, 
we obtain 
\begin{eqnarray}
\d^2 f(\bar x)(w\verl z) &\le & \liminf_{k\to \infty}  \dfrac{f(\ox+t_kw+\frac{1}{2}t_k^2 z_k)-f(\ox)-t_k\d f(\ox)(w)}{\frac{1}{2}t_k^2}\nonumber\\
&\le&  \limsup_{k\to \infty}  \dfrac{f(\ox+t_kw+\frac{1}{2}t_k^2 z_k)-f(\ox)-t_k\d f(\ox)(w)}{\frac{1}{2}t_k^2} \nonumber\\
&=& \limsup_{k\to \infty}  \dfrac{g(F(x_k))-g(F(\ox))-t_k\d g(F(\ox))(\nabla F(\ox)w)}{\frac{1}{2}t_k^2} \nonumber\\
&\le &  \lim_{k\to \infty}  \dfrac{g(y_k)-g(F(\ox))-t_k\d g(F(\ox))(\nabla F(\ox)w)}{\frac{1}{2}t_k^2} +\limsup_{k\to \infty}\dfrac{g(F(x_k))-g(y_k)}{\frac{1}{2}t_k^2} \nonumber\\
&\le & \d^2 g(F(\ox))( \nabla F(\ox) w \verlm u) +\limsup_{k\to \infty}\ell \|\nabla F(\ox)z_k+\nabla^2F(\ox)(w,w)-u_k+\frac{o(t_k^2)}{t_k^2} \| \nonumber\\
&=& \d^2 g(F(\ox))( \nabla F(\ox) w \verlm u)\label{psgf}.
\end{eqnarray}
On the other hand, it is not hard to see that for any $z\in \X$, we always have 
$$
 \d^2 g(F(\ox))( \nabla F(\ox) w \verlm u) \le \d^2 f(\bar x)(w\verl z).
$$
Combining this and \eqref{psgf} implies that 
$$
 \d^2 f(\bar x)(w\verl z)=\d^2 g(F(\ox))( \nabla F(\ox) w \verlm u)
$$
and that 
$$
\d^2 f(\bar x)(w\verl z) = \lim_{k\to \infty}  \dfrac{f(\ox+t_kw+\frac{1}{2}t_k^2 z_k)-f(\ox)-t_k\d f(\ox)(w)}{\frac{1}{2}t_k^2}.
$$
These prove \eqref{sochpar1}  and \eqref{pepi} for any $z\in T_{\ss \dom f}^2 (\ox, w)$, respectively, and hence  we finish the proof of (i).

Next, we are going to verify (ii). We already know that  inclusion \eqref{domps} always holds.
To derive the opposite inclusion, pick $z\in T_{\ss \dom f}^2 (\ox, w)$, which amounts to 
$u\in T^2_{\ss \dom g}\big (F(\bar x), \nabla F(\ox)w \big)$  due to \eqref{chrs}. 
 By (i) and \eqref{domgp}, we obtain 
 $$
 \d^2 f(\bar x)(w\verl z)=\d^2 g(F(\ox))( \nabla F(\ox) w \verlm u)<\infty.
 $$
 This tells us that $z\in \dom \d^2 f(\bar x)(w\verl\cdot)$ and hence completes the proof of (ii).
 
 Finally, to justify (iii), we require to prove the fulfillment of \eqref{pepi} for all $z\in \X$ and
 to show that $\dom  \d^2 f(\bar x)(w\verl \cdot)\neq\emptyset$. The former was proven above and so we proceed with the proof of the latter. 
 This, indeed, follows from \eqref{pdfps} and the characterization of $\dom  \d^2 f(\bar x)(w\verl \cdot)$, achieved in (ii), and  thus completes the proof.
\end{proof}

It is worth mentioning that a chain rule for parabolic subderivatives for the composite form \eqref{CF}
was achieved in \cite[Exercise~13.63]{rw} and \cite[Proposition~3.42]{bs}  when  $g$ is merely a proper l.s.c.\ function and  the basic constraint
qualification \eqref{gf07} is satisfied. Replacing the latter condition with the significantly weaker condition \eqref{mscq1}, we can achieve a similar result if we assume further that   
 $g$ is convex and locally Lipschitz continuous relative to its domain. Another important difference between Theorem~\ref{sochpar} and those mentioned above is that 
 the chain rule \eqref{sochpar1} obtained in \cite{bs,rw} does not require the parabolic epi-differentiability of $g$. Indeed, the usage of 
 the basic constraint qualification \eqref{gf07} in \cite{bs,rw} allows to achieve \eqref{sochpar1} via a chain rule for the epigraphs of $f$ and $g$ 
 similar to the one in \eqref{chrs}, which is not conceivable   under \eqref{mscq1}. These extra assumptions on $g$ automatically fulfill in many important 
 composite and constrained optimization problems and so  do not seem to be restrictive in our developments.
 
 We continue by establishing two important properties for parabolic subderivatives that play crucial roles in our developments in the next 
 section. One notable difference between the following results and those obtained in Proposition~\ref{psd} and Theorem~\ref{sochpar}
 is that we require the parabolic subderivative be proper. This can be achieved if the parabolic subderivative is  bounded below.
 In general, we may not be able to guarantee this. It turns out, however,  that if the vector $w$ in the pervious results is taken from 
 the critical cone to the function in question, which is a subset of the tangent cone to the domain of that function, this can be accomplished 
 via \eqref{pri3}. Since we only conduct our analysis in the next section over the critical cone, this will provide no harm.
 Below, we first show that the parabolic subderivative of an extended-real-valued function, which is locally Lipschitz continuous relative to its domain,  
 is Lipschitz continuous relative to its domain. 
 
 \begin{Proposition}[Lipschitz continuity of  of  parabolic subderivatives]\label{lcps}
Let $\psi: \X \to \oR$ be finite at $\ox$  and $\ov\in \sub^p \psi(\ox)$, and let  $\psi$ be Lipschitz continuous around $\ox$ relative to its domain with constant $\ell\in \R_+$. 
Assume that $w \in K_\psi(\ox, \ov)$ and that $\psi$ is parabolically epi-differentiable at $\ox$ for  $w$. Then  
the parabolic subderivative  $\d^2 \psi(\bar x)(w\verl \cdot)$ is  proper, l.s.c., and Lipschitz continuous relative to its domain with constant $\ell$.
\end{Proposition}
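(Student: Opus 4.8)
The plan is to establish the three claimed properties separately, disposing of properness and lower semicontinuity quickly and devoting the bulk of the argument to the Lipschitz estimate. First I record a reduction: since $w\in K_\psi(\ox,\ov)$ we have $\d\psi(\ox)(w)=\la\ov,w\ra$, which is in particular finite, so Proposition~\ref{dfs} gives $w\in\dom\d\psi(\ox)=T_{\ss\dom\psi}(\ox)$. Hence $\psi$ meets the hypotheses of Proposition~\ref{psd} at $\ox$ for $w$, which yields $\dom\d^2\psi(\ox)(w\verl\cdot)=T_{\ss\dom\psi}^2(\ox,w)$, a set that is nonempty because $\psi$ is parabolically epi-differentiable at $\ox$ for $w$.

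For properness it remains only to rule out the value $-\infty$. Because $\d\psi(\ox)(w)=\la\ov,w\ra$, inequality~\eqref{pr2} applies and gives $\d^2\psi(\ox,\ov)(w)\le\inf_{z\in\X}\{\d^2\psi(\ox)(w\verl z)-\la z,\ov\ra\}$, while Proposition~\ref{ssp}(iii)---valid since $\ov\in\sub^p\psi(\ox)$---gives $\d^2\psi(\ox,\ov)(w)\ge-r\|w\|^2$ with $r\in\R_+$ the constant from \eqref{proxs}. Combining, $\d^2\psi(\ox)(w\verl z)\ge\la z,\ov\ra-r\|w\|^2>-\infty$ for every $z\in\X$, so $\d^2\psi(\ox)(w\verl\cdot)$ is proper. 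Lower semicontinuity is then a general feature of parabolic subderivatives: being a $\liminf$ taken jointly over $t\dn0$ and $z'\to z$, the function $z\mapsto\d^2\psi(\ox)(w\verl z)$ is l.s.c.\ by a routine diagonal argument (equivalently, under parabolic epi-differentiability it is the epi-limit of the second-order difference quotients, which is l.s.c.), and I would record this in one or two lines.

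The heart of the proof is the Lipschitz inequality. Fix $z_1,z_2\in\dom\d^2\psi(\ox)(w\verl\cdot)$; by properness both $\d^2\psi(\ox)(w\verl z_1)$ and $\d^2\psi(\ox)(w\verl z_2)$ are real numbers, and by symmetry it suffices to prove $\d^2\psi(\ox)(w\verl z_1)\le\d^2\psi(\ox)(w\verl z_2)+\ell\|z_1-z_2\|$. Fix an arbitrary $t_k\dn0$. Applying parabolic epi-differentiability of $\psi$ at $\ox$ for $w$ to $z_1$ and to $z_2$ along this same sequence, choose $z^1_k\to z_1$ and $z^2_k\to z_2$ with
$$\lim_{k\to\infty}\frac{\psi(\ox+t_kw+\sm t_k^2 z^i_k)-\psi(\ox)-t_k\d\psi(\ox)(w)}{\sm t_k^2}=\d^2\psi(\ox)(w\verl z_i),\qquad i=1,2.$$
Since each limit is finite, the points $x^i_k:=\ox+t_kw+\sm t_k^2 z^i_k$ satisfy $\psi(x^i_k)<\infty$, hence $x^i_k\in\dom\psi$, for all large $k$; moreover $x^i_k\to\ox$, so for large $k$ both $x^1_k$ and $x^2_k$ lie in the neighborhood $U$ of $\ox$ on which $\psi$ is Lipschitz continuous relative to $\dom\psi$ with constant $\ell$. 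Therefore $\psi(x^1_k)-\psi(x^2_k)\le\ell\|x^1_k-x^2_k\|=\sm\ell t_k^2\|z^1_k-z^2_k\|$, and dividing by $\sm t_k^2$ (after subtracting the common term $\psi(\ox)+t_k\d\psi(\ox)(w)$ from both numerators) yields, with $h_t(z):=\big(\psi(\ox+tw+\sm t^2 z)-\psi(\ox)-t\d\psi(\ox)(w)\big)/(\sm t^2)$, the bound $h_{t_k}(z^1_k)\le h_{t_k}(z^2_k)+\ell\|z^1_k-z^2_k\|$. Passing to the limit,
$$\d^2\psi(\ox)(w\verl z_1)\le\liminf_{k\to\infty}h_{t_k}(z^1_k)\le\lim_{k\to\infty}\big(h_{t_k}(z^2_k)+\ell\|z^1_k-z^2_k\|\big)=\d^2\psi(\ox)(w\verl z_2)+\ell\|z_1-z_2\|,$$
which is the desired inequality.

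The only real (and still modest) difficulty is the bookkeeping in the last paragraph: one needs recovery sequences for $z_1$ \emph{and} for $z_2$ along one and the same $t_k\dn0$, so that both difference-quotient arguments $x^1_k$ and $x^2_k$ land simultaneously in $U\cap\dom\psi$; this is exactly what parabolic epi-differentiability (``for every $z$ and every $t_k\dn0$'') together with finiteness of the two parabolic subderivatives provides, and no appeal to an arc or to parabolic derivability of $\dom\psi$ is required for this step. A side remark is that the eventual finiteness of $h_{t_k}(z^i_k)$---needed both to land in $\dom\psi$ and to invoke the Lipschitz bound---is precisely where properness, established first, is used.
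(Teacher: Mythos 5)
Your proof is correct and follows essentially the same route as the paper: properness via the two-sided bound coming from $\ov\in\sub^p\psi(\ox)$ and \eqref{pr2}, l.s.c.\ as a generic property of the $\liminf$ definition, and the Lipschitz estimate by comparing difference quotients along a common sequence $t_k\dn 0$ using the Lipschitz continuity of $\psi$ relative to its domain. The only cosmetic difference is that you re-derive the key inequality \eqref{lipps} inline (with recovery sequences for both $z_1$ and $z_2$), whereas the paper simply applies \eqref{lipps}, already established in the proof of Proposition~\ref{psd}, twice with the roles of $z_1$ and $z_2$ swapped.
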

\begin{proof}   Since $\psi$ is parabolically epi-differentiable at $\ox$ for  $w$, we get  $\dom \d^2 \psi(\bar x)(w\verl \cdot) \neq\emptyset$.  
 Let  $z\in \dom \d^2 \psi(\bar x)(w\verl \cdot)$. By Proposition~\ref{dss}, we find  $r\in \R_+$ such that 
\begin{equation}\label{psips}
-r \|w\|^2 \leq  \d^2 \psi (\bar x \verl \ov ) (w) \leq  \d^2 \psi(\ox)(w\verl z) -\la z, \ov\ra.
\end{equation}
 This tells us that $\d^2 \psi(\ox)(w\verl z)$ is finite for every $z\in \dom \d^2 \psi(\bar x)(w\verl \cdot)$ and 
 thus  the parabolic subderivative $\d^2 \psi(\ox)(w\verl \cdot)$ is proper. Pick now $z_i\in \dom \d^2 \psi(\bar x)(w\verl \cdot)$ for $i=1,2$. By Proposition~\ref{psd}(i), we have 
$z_i \in T_{\ss \dom \psi}^2 (\ox , w)$ for $i=1,2$. Letting $z:=z_1$ and $z_w:=z_2$ in \eqref{lipps} results in 
$$
  \d^2 \psi(\bar x)(w\verl z_1) \le  \d^2 \psi(\bar x)(w\verl z_2)  + \ell \|z_1 - z_2\|.
$$ 
Similarly, we can let $z:=z_2$ and $z_w:=z_1$ in \eqref{lipps} and obtain  
$$
 \d^2 \psi(\bar x)(w\verl z_2) \le  \d^2 \psi(\bar x)(w\verl z_1)  + \ell \|z_1 - z_2\|.
$$
Combining these implies that the parabolic subderivative is Lipschitz continuous relative to its domain. By \cite[Proposition~13.64]{rw}, the parabolic subderivative  is always  an l.s.c.\  function, which  completes the proof.
\end{proof} 

We end this section by obtaining an exact formula for the conjugate function of the parabolic subderivative 
of a convex function. 

\begin{Proposition}[conjugate  of parabolic subderivatives]\label{dpd}
Let $\psi:\X\to \oR$ be an l.s.c.\ convex function  with $\psi(\ox)$ finite, $\ov\in \sub \psi(\ox), and $ $w\in K_\psi(\ox,\ov)$. Define the function $\ph$ by  $\ph(z):= \d^2 \psi(\ox)( w\verl z)$ for any $z\in \X$.
If $\psi$ is parabolically epi-differentiable at $\ox$ for $w$ and parabolically regular at $\ox$ for every $v\in \sub \psi(\ox)$, 
then $\ph$ is a proper, l.s.c., and  convex function and its conjugate function is given by 
\begin{equation}\label{conj}
\ph^*(v)=\begin{cases}
- \d^2 \psi(\ox , v) (w)&\mbox{if}\;\;  v\in {\cal A}(\ox,w),\\
\infty& \mbox{otherwise},
\end{cases}
\end{equation}
where ${\cal A}(\ox,w):=\{v\in \sub \psi(\ox)|\, \d \psi(\ox)(w)=\la v,w\ra\}$.
\end{Proposition}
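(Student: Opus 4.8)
I would prove \eqref{conj} by a duality/conjugacy argument, exploiting the fact that parabolic regularity of $\psi$ at $\ox$ for every subgradient $v$ turns the inequality \eqref{pr2} into the exact formula \eqref{pri1}. Write $\ph(z) = \d^2\psi(\ox)(w\verl z)$. First I would establish that $\ph$ is proper, l.s.c., and convex. Lower semicontinuity is automatic from \cite[Proposition~13.64]{rw}; properness follows from Proposition~\ref{lcps} (applicable since $w\in K_\psi(\ox,\ov)$ with $\ov\in\sub\psi(\ox)=\sub^p\psi(\ox)$ for convex $\psi$, and $\psi$ is parabolically epi-differentiable at $\ox$ for $w$); and convexity of $z\mapsto\d^2\psi(\ox)(w\verl z)$ for convex $\psi$ is classical (it comes from the convexity of the second-order difference quotients $z\mapsto [\psi(\ox+tw+\tfrac12 t^2 z)-\psi(\ox)-t\,\d\psi(\ox)(w)]/(\tfrac12 t^2)$ together with $\d\psi(\ox)$ being the exact directional derivative, so each quotient is an affine-plus-convex function of $z$; passing to the $\liminf$ preserves convexity — alternatively cite \cite[Proposition~13.64]{rw} or \cite{bs}).

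**Computing the conjugate.** With $\ph$ proper l.s.c.\ convex, I would compute $\ph^*(v)=\sup_{z}\{\la v,z\ra - \ph(z)\} = -\inf_z\{\ph(z)-\la v,z\ra\} = -\inf_z\{\d^2\psi(\ox)(w\verl z)-\la v,z\ra\}$. Now invoke the characterization of parabolic regularity: for those $v$ with $w\in K_\psi(\ox,v)$ (equivalently $\d\psi(\ox)(w)=\la v,w\ra$ and $v\in\sub\psi(\ox)$, i.e.\ $v\in{\cal A}(\ox,w)$), Proposition~\ref{parregularity} applied to the pair $(\ox,v)$ gives $\inf_z\{\d^2\psi(\ox)(w\verl z)-\la v,z\ra\}=\d^2\psi(\ox,v)(w)$, whence $\ph^*(v)=-\d^2\psi(\ox,v)(w)$, which is the first branch of \eqref{conj}. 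For $v\notin{\cal A}(\ox,w)$ I must show $\ph^*(v)=\infty$, i.e.\ $\inf_z\{\d^2\psi(\ox)(w\verl z)-\la v,z\ra\}=-\infty$. This splits into two cases: if $v\notin\sub\psi(\ox)$, one uses a direction of recession of $\ph$ along which $\la v,\cdot\ra$ grows faster — concretely, $\dom\ph = T^2_{\ss\dom\psi}(\ox,w)$ by Proposition~\ref{psd}(i), and on this set $\ph(z)$ is controlled by the Lipschitz estimate of Proposition~\ref{lcps} plus the affine behavior of convex second-order quotients, so $\ph(z)$ grows at most linearly while a suitable $v$ not in the (closed convex) subdifferential yields unbounded $\la v,z\ra - \ph(z)$; if $v\in\sub\psi(\ox)$ but $\d\psi(\ox)(w)\neq\la v,w\ra$ (only ``$<$'' is possible since $\d\psi(\ox)=\max_{v\in\sub\psi(\ox)}\la v,\cdot\ra$ for convex $\psi$), then I would use the explicit form $\ph(z)\le \la \text{(2nd-order data)},z\ra + \text{const}$ and note that shifting $z$ along $w$ itself, $z=z_0+sw$, keeps $z$ feasible (since $\ox+tw\in\dom\psi$ suitably, using that $\dom\psi$ is parabolically derivable) and makes $\la v,z\ra-\ph(z)\to+\infty$ because the $\la v,w\ra - \d\psi(\ox)(w)$ coefficient is strictly negative after accounting for signs — this is exactly the mechanism by which the infimum in \eqref{pr2} is $-\infty$ off the critical cone.

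**Main obstacle.** The genuinely delicate point is the second branch: showing $\ph^*(v)=\infty$ for $v\notin{\cal A}(\ox,w)$, and in particular handling $v\in\sub\psi(\ox)\setminus{\cal A}(\ox,w)$. The clean way is to observe that for convex $\psi$ one has the general inequality $\d^2\psi(\ox)(w\verl z) \ge \d^2\psi(\ox,v)(w) + \la v,z\ra$ \emph{only} when $v\in{\cal A}(\ox,w)$ — which is \eqref{pr2} — and to extract from the structure of second-order difference quotients the reverse growth estimate $\d^2\psi(\ox)(w\verl z)\le C(1+\|z\|)$ on $\dom\ph$ that forces the supremum to blow up whenever the ``linear part'' of $v$ is not matched. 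I would most likely route this through \cite[Proposition~13.64]{rw} and the biconjugation identity $\ph=(\ph^*)^*$: once the first branch is established on ${\cal A}(\ox,w)$ and $\ph$ is proper l.s.c.\ convex, taking conjugates back must reproduce $\ph$, and this consistency check, combined with Proposition~\ref{parregularity} applied at each $v$, pins down $\ph^*$ to be $+\infty$ everywhere off ${\cal A}(\ox,w)$ — because if $\ph^*(v)$ were finite for some such $v$, biconjugating would contradict the already-known exact formula \eqref{pri1} for $\ph$ (it would produce a strictly smaller l.s.c.\ convex hull). I expect the cleanest writeup to phrase the whole argument as: (1) $\ph$ is proper l.s.c.\ convex; (2) for $v\in{\cal A}(\ox,w)$, $-\ph^*(v)=\d^2\psi(\ox,v)(w)$ by Proposition~\ref{parregularity}; (3) for $v\notin{\cal A}(\ox,w)$, either $v\notin\sub\psi(\ox)$ and then $\d^2\psi(\ox,v)(w)=-\infty$ forces (via \eqref{pr2} read as an identity on ${\cal A}$ and l.s.c.\ convexity) $\ph^*(v)=+\infty$, or $\d\psi(\ox)(w)<\la v,w\ra$ is impossible since $v\in\sub\psi(\ox)\Rightarrow\la v,w\ra\le\d\psi(\ox)(w)$ and strict inequality $\la v,w\ra<\d\psi(\ox)(w)$ is handled by the recession-along-$w$ computation above.
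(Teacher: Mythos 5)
Your treatment of the first half of the statement is essentially the paper's: properness via the lower bound \eqref{psips} (note, though, that Proposition~\ref{lcps} itself is not applicable here, since Proposition~\ref{dpd} does \emph{not} assume $\psi$ is Lipschitz continuous relative to its domain --- only the bound \eqref{psips}, which needs just $\ov\in\sub^p\psi(\ox)$, survives); l.s.c.\ from \cite[Proposition~13.64]{rw}; convexity from the convexity of the parabolic difference quotients together with epi-convergence (the paper instead identifies $\epi\ph$ with a second-order tangent set to $\epi\psi$); and, for $v\in{\cal A}(\ox,w)$, exactly the paper's application of Proposition~\ref{parregularity} at the pair $(\ox,v)$.

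The genuine gap is the branch $v\notin{\cal A}(\ox,w)$. Your biconjugation ``consistency check'' does not work: knowing $\ph^{**}=\ph$ and knowing $\ph^*$ on ${\cal A}(\ox,w)$ places no upper bound on how small $\ph^*$ can be off that set --- assuming $\ph^*(v_0)$ finite for some $v_0\notin{\cal A}(\ox,w)$ only yields $\ph(z)\ge\la v_0,z\ra-\ph^*(v_0)$, which is a tautology, not a contradiction with \eqref{pri1}. Your direct arguments also have problems: for $v\notin\sub\psi(\ox)$ you lean on the Lipschitz growth control of Proposition~\ref{lcps}, which is unavailable here, and you never identify the recession direction that makes $\la v,z\ra-\ph(z)$ blow up; for $v\in\sub\psi(\ox)$ with $\la v,w\ra<\d\psi(\ox)(w)$ your shift $z=z_0+sw$ has the wrong sign --- since $\d^2\psi(\ox)(w\verl z\pm sw)\le\d^2\psi(\ox)(w\verl z)\pm s\,\d\psi(\ox)(w)$, the ray that sends $\la v,z\ra-\ph(z)$ to $+\infty$ is $z_0-sw$, $s\to+\infty$, not $z_0+sw$. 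The paper's mechanism is different and cleaner: it computes the conjugates of the parabolic difference quotients explicitly,
\begin{equation*}
\vartheta_t^*(v)=\dfrac{\psi(\ox)+\psi^*(v)-\la v,\ox\ra}{\tfrac12 t^2}+\dfrac{\d\psi(\ox)(w)-\la v,w\ra}{\tfrac12 t},
\end{equation*}
invokes the continuity of the Legendre--Fenchel transform under epi-convergence of proper l.s.c.\ convex functions \cite[Theorem~11.34]{rw} to get $\ph^*(v)=\lim_k\vartheta_{t_k}^*(v_k)$ for some $v_k\to v$, and then reads off the blow-up from the two terms: the first is strictly positive (after dividing by $t_k$) when $v\notin\sub\psi(\ox)$ by Fenchel--Young and the lower semicontinuity of $\psi^*$, and the second is strictly positive when $\la v,w\ra<\d\psi(\ox)(w)$. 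You would need either this epi-convergence-of-conjugates argument or a correctly executed recession argument on $\epi\ph=T^2_{\epi\psi}\bigl((\ox,\psi(\ox)),(w,\d\psi(\ox)(w))\bigr)$ to close the gap.
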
 
\begin{proof}  It follows from \cite[Proposition~13.64]{rw} that $\ph$  is   l.s.c.
 Using similar arguments 
as the beginning of the proof of Proposition~\ref{lcps} together with \eqref{psips} tells us that 
$\ph$ is proper.  Also we deduce from \cite[Example~13.62]{rw} that 
$$
\epi \ph=T^2_{\ss \epi \psi}\big((\ox, \psi(\ox)), (w, \d \psi(\ox)(w))\big),
$$
and thus the parabolic epi-differentiability of $\psi$ at $\ox$ for $w$ amounts to the parabolic derivability of $\epi \psi $ at $(\ox, \psi(\ox))$ for  $(w, \d \psi(\ox)(w))$.
The latter combined with the convexity of $\psi$ tells us that $\epi \ph$ is a convex set in $\X\times \R$ and so $\ph$ is convex. 

To verify \eqref{conj}, pick $v\in {\cal A}(\ox,w)$. This  yields $v\in \sub \psi(\ox)=\sub^p\psi(\ox)$ and  $w\in K_\psi(\ox,v)$, namely the critical cone of $\psi$ at $(\ox,v)$.
 Using  Proposition~\ref{parregularity}
and parabolic regularity of $\psi$ at $\ox$ for $v$ implies that 
$$
\d^2 \psi(\ox , v) (w)=\inf_{z\in \X}\big\{\d^2 \psi(\ox)( w\verl z) -\la z,v\ra\big\}=-\ph^*(v),
$$
which clearly proves \eqref{conj} in this case. Assume now that $v\notin {\cal A}(\ox,w)$. This means either $v\notin \sub \psi(\ox)$ or $\d \psi(\ox)(w)\neq \la v,w\ra$.
Define the parabolic difference quotients for $\psi$ at $\ox$ for $w$ by 
$$
\vartheta_t( z)=\dfrac{\psi(\ox+tw+\frac{1}{2}t^2 z)-\psi(\ox)-t\d \psi(\ox)(w)}{\frac{1}{2}t^2}, \quad z\in \X, \;\;t>0.
$$
It is not hard to see that  $\vartheta_t$ are proper, convex, and  
$$
\vartheta_t^*(v)=\dfrac{\psi(\ox)+\psi^*(v)-\la v,\ox\ra}{\frac{1}{2}t^2}+ \frac{\d\psi(\ox)(w)-\la v,w\ra}{\frac{1}{2}t}, \quad v\in \X.
$$
Remember  that by \cite[Definition~13.59]{rw} the parabolic epi-differentiability of $\psi$ at $\ox$ for $w$ amounts to 
the sets  $\epi \vartheta_t$ converging to $\epi \ph$ as $t\dn 0$ and that  the functions $\vartheta_t$ and $\ph$ are proper, l.s.c.\! and convex. 
Appealing to \cite[Theorem~11.34]{rw} tells us that the former is equivalent to the sets $\epi \vartheta_t^*$ converging to 
$\epi \ph^*$ as $t\dn 0$. This, in particular, means that  for any $v\notin {\cal A}(\ox,w)$ and any sequence $t_k\dn 0$, there exists a sequence  $v_k\to v$ such that 
$$
\ph^*(v)=\lim_{k\to \infty} \vartheta_{t_k}^*(v_k).
$$
If $v\notin \sub \psi(\ox)$, then we have 
$$
\psi(\ox)+\psi^*(v)-\la v,\ox\ra>0.
$$
Since $\psi^*$ is l.s.c.,\ we get
$$
\liminf_{k\to \infty} \dfrac{\psi(\ox)+\psi^*(v_k)-\la v_k,\ox\ra}{\frac{1}{2}t_k}+ \frac{\d\psi(\ox)(w)-\la v_k,w\ra}{\frac{1}{2}} \ge \infty,
$$
which in turn confirms that 
$$
\ph^*(v)=\lim_{k\to \infty} \vartheta_{t_k}^*(v_k)= \lim_{k\to \infty}\frac{1}{t_k}\big(\dfrac{\psi(\ox)+\psi^*(v_k)-\la v_k,\ox\ra}{\frac{1}{2}t_k}+ \frac{\d\psi(\ox)(w)-\la v_k,w\ra}{\frac{1}{2}}\big)= \infty.
$$
If $v\in \sub \psi(\ox)$ but $\d \psi(\ox)(w)\neq \la v,w\ra$, we obtain $\la v,w\ra<\d \psi(\ox)(w) $. Since we always have 
$$
\psi(\ox)+\psi^*(v_k)-\la v_k,\ox\ra\ge 0\quad \mbox{ for all}\;\; k\in \N,
$$ we arrive at 
$$
\ph^*(v)=\lim_{k\to \infty} \vartheta_{t_k}^*(v_k)\ge \lim_{k\to \infty} \frac{\d\psi(\ox)(w)-\la v_k,w\ra}{\frac{1}{2}t_k}=\infty,
$$
which justifies \eqref{conj} when $v\notin {\cal A}(\ox,w)$ and hence finishes the proof.
\end{proof}

Proposition~\ref{dpd} was first established using a different method in \cite[Proposition~3.5]{r88} for convex piecewise linear-quadratic functions.
 It was extended in \cite[Theorem~3.1]{c91} for any convex functions 
under a restrictive assumption. Indeed, this result demands  that the second subderivative  be the same as the second-order directional derivative.
Although this condition holds for convex piecewise linear-quadratic functions, it fails for many important functions occurring in constrained and composite 
optimization problems including the set indicator functions and eigenvalue functions. As discussed below, however, our assumptions
are satisfied for all these examples. 
\begin{Example}\label{ex04}
{\rm Suppose that  $g:\Y\to \oR$  is an l.s.c.\ convex function and $\oz\in \Y$.
\begin{itemize}[noitemsep,topsep=0pt]
\item [\bf{(a)}] If $\Y=\R^m$,   $g$  is convex piecewise linear-quadratic (Example~\ref{plqf}),  and $\oz\in \dom g$, then it follows from  Example~\ref{plqf} and 
 \cite[Exercise~13.61]{rw} that $g$ is parabolically regular at $\oz$ for every $y\in \sub g(\oz)$ and parabolically epi-differentiable at $\oz$ for every $w\in \dom \d g(\oz)$, respectively, 
 and thus all the assumptions of Proposition~\ref{dpd} are satisfied for this function.

\item [\bf{(b)}] If $\Y=\S^m$,   $g$  is either the maximum eigenvalue function $\lm_{\max}$ from \eqref{maxeig} or the function $\sigma_i$ from \eqref{fsig},  and $A\in \S^n$, 
then by Example~\ref{eig} $g$ is  parabolically regular at $A$ for every $V\in \sub g(A)$. Moreover, we deduce from \cite[Proposition~2.2]{t1} that 
$g$ is parabolically epi-differentiable at $A$ for every $W\in \S^n$ and thus all the assumptions of Proposition~\ref{dpd} are satisfied for these functions.
\item [\bf{(c)}] If $g=\dd_C$ and $\oz\in C$, where $C$ is a closed  convex set in $\Y$ that is parabolically derivable at $\oz$ for every $w\in T_C(\oz)$
and parabolically regular at $\oz$ for every $v\in N_C(\oz)$, then $g$ satisfies the assumptions imposed in  Proposition~\ref{dpd}.
This example of $g$ was recently explored in detail in \cite{mms2} and encompasses important sets appearing in constrained optimization problems
such as polyhedral convex sets, the second-order cone, and the cone of positive semidefinite symmetric matrices.
 \item[\bf{(d)}] Assume that  $g$ is differentiable  at $\oz$ and that  there exists 
 a continuous  function $h : \Y \to \R$, which is positively homogeneous of degree $2$,  such that 
\begin{equation*}
g(z)=g(\oz)+ \la \nabla g(\oz) , z - \oz \ra +  \sm h(z-\oz) + o(\| z - \oz \|^2 ).
\end{equation*}
Such a function $g$ is called twice semidifferentiable (cf. \cite[Example~13.7]{rw}) and often appears 
in the augmented Lagrangian function associated with \eqref{comp}; see \cite[Section~8]{mms2} for more detail.
This second-order expansion clearly justifies the  parabolic epi-differentiability of $g$ at $\oz$ for every  $w\in \Y$. Moreover,  one has 
\begin{equation*}\label{dissquar1}
 \d^2 g(\oz, \nabla g(\oz))(w)=h(w) = \d^2 g(\oz) ( w \verl u) - \la \nabla g(\oz) , u \ra\;\; \mbox{for all}\;\;u,w\in \Y,
\end{equation*}
which in turn shows that $g$ is  parabolically regular at $\oz$ for $\nabla g(\oz)$ due to Proposition \ref{parregularity}.
\end{itemize}

}
\end{Example}

It is important to mention that the restrictive assumption on the second subderivative, used in \cite[Theorem~3.1]{c91}, does not hold for cases (b)-(d)
in Example~\ref{ex04}. 
\vspace{-0.1 in}
 
 \section{ A Chain Rule for Parabolically Regular Functions}\sce \label{sect05}
Our main objective in this section is to derive a chain rule for the parabolic regularity of the composite representation \eqref{CF}.
 This opens the door to obtain a chain rule for the second subderivative, and, more importantly, 
 allows us to establish the twice epi-differentiability of the latter composite form. 

Taking into account representation \eqref{CF} and picking a subgradient $\ov \in \sub f(\ox)$, we 
 define the set of {\em Lagrangian multipliers} associated with $(\ox,\ov)$
 by 
 $$ 
 \Lambda(\ox , \ov) = \big\{y \in \Y |\; \nabla F(\ox)^* y = \ov , \; y \in \sub g(F(\ox)) \big \}.
 $$
 
 In what follows, we say that a function $f:\X\to \oR$ with $(\ox,\ov)\in \gph \sub f$ and having the composite representation \eqref{CF} at $\ox$ satisfies 
 the basic assumptions at $(\ox,\ov)$ if in addition the following conditions fulfill: 
\begin{itemize}[noitemsep,topsep=2pt]
\item [\bf{(H1)}] the metric subregularity constraint qualification holds  for the constraint set \eqref{CS} at $\ox$;
\item [\bf{(H2)}] for any $y\in \Lambda(\ox,\ov)$,  the function $g$ from \eqref{CF} is parabolically  epi-differentiable  at $F(\ox)$ for every $ u \in K_{ g} (F(\ox),y)$;
\item [\bf{(H3)}] for any $y\in \Lambda(\ox,\ov)$,  the function $g$ is parabolically regular at $F(\ox)$ for $y$.
\end{itemize}

We begin with the following result in which 
we collect lower and upper estimates for the second subderivative of $f$ taken from \eqref{CF}.

\begin{Proposition}[properities of second subderivatives for composite functions]\label{lues}
Let $f:\X\to \oR$ have the composite representation \eqref{CF} at $\ox\in \dom f$,  $\ov\in \sub f(\ox)$, and 
let the basic assumptions {\rm(H1)} and {\rm(H2)} hold for $f$ at $(\ox,\ov)$. Then the second subderivative $\d^2 f(\ox , \ov)$ is a proper l.s.c. function with 
\begin{equation}\label{domss}
\dom \d^2 f(\ox , \ov) =K_f(\ox,\ov).
\end{equation}
Moreover, for every $w\in \X$ we have the lower estimate 
\begin{equation}\label{less}
\d^2 f(\ox , \ov) (w)\ge \sup_{y\in \Lambda(\ox,\ov)}\big\{ \la y , \nabla^2 F(\ox) (w,w) \ra + \d^2 g(F(\ox) , y) ( \nabla F(\ox) w )\big\},
\end{equation}
while for every $w\in K_f(\ox,\ov)$ we obtain the upper estimate
\begin{eqnarray}\label{uess}
\d^2 f(\ox , \ov) (w)\le  \inf_{z \in \X} \big\{    -\la z, \ov\ra +\d^2 g(F(\ox))( \nabla F(\ox) w \verlm \nabla F(\ox) z + \nabla^2 F(\ox)(w,w))\big \} <\infty.
\end{eqnarray}
\end{Proposition}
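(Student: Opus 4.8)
The plan is to obtain the properness, the domain formula \eqref{domss}, and the upper estimate \eqref{uess} as essentially direct consequences of Propositions~\ref{dfs}, \ref{dss}, \ref{fsch} and Theorem~\ref{sochpar}, and to prove the lower estimate \eqref{less} by a direct computation with the second-order difference quotients. First I would record that Proposition~\ref{fsch}(ii) gives $\ov\in\sub^p f(\ox)$ and $\Lambda(\ox,\ov)\neq\emp$, and that $\dom\d f(\ox)=T_{\ss\dom f}(\ox)$ by Propositions~\ref{dfs} and \ref{fsch}; hence Proposition~\ref{ssp}(i),(iii) already makes $\d^2 f(\ox,\ov)$ an l.s.c.\ proper function. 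To pin down its domain I would fix $w\in K_f(\ox,\ov)$ and any $y\in\Lambda(\ox,\ov)$ and observe, via the subderivative chain rule of Proposition~\ref{fsch}(i) together with $\nabla F(\ox)^*y=\ov$, that $\d g(F(\ox))(\nabla F(\ox)w)=\d f(\ox)(w)=\la\ov,w\ra=\la y,\nabla F(\ox)w\ra$, i.e.\ $\nabla F(\ox)w\in K_g(F(\ox),y)$; then (H2) provides the parabolic epi-differentiability of $g$ at $F(\ox)$ for $\nabla F(\ox)w$, so that Theorem~\ref{sochpar}(iii) shows $f$ is parabolically epi-differentiable at $\ox$ for $w$ and in particular $\dom\d^2 f(\ox)(w\verl\cdot)\neq\emp$. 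Proposition~\ref{dss} then yields both \eqref{domss} and, for every $w\in K_f(\ox,\ov)$, the chain of inequalities \eqref{pri3}.

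The upper estimate \eqref{uess} follows at once from this: for $w\in K_f(\ox,\ov)$ the right-hand inequalities in \eqref{pri3} read $\d^2 f(\ox,\ov)(w)\le\inf_{z}\{\d^2 f(\ox)(w\verl z)-\la z,\ov\ra\}<\infty$, and substituting the parabolic-subderivative chain rule \eqref{sochpar1} of Theorem~\ref{sochpar}(i), namely $\d^2 f(\ox)(w\verl z)=\d^2 g(F(\ox))(\nabla F(\ox)w\verlm\nabla F(\ox)z+\nabla^2 F(\ox)(w,w))$, turns the right-hand side into exactly the bound displayed in \eqref{uess}.

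For the lower estimate, which is the heart of the argument, I would fix $y\in\Lambda(\ox,\ov)$ and $w\in\X$; if $\d^2 f(\ox,\ov)(w)=\infty$ there is nothing to prove, so choose sequences $t_k\dn0$ and $w_k\to w$ realizing $\Delta_{t_k}^2 f(\ox,\ov)(w_k)\to\d^2 f(\ox,\ov)(w)<\infty$, which forces $\ox+t_kw_k\in\dom f$ for all large $k$ and hence $f(\ox+t_kw_k)=g(F(\ox+t_kw_k))$. Twice differentiability of $F$ at $\ox$ lets me write $F(\ox+t_kw_k)=F(\ox)+t_k\tilde v_k$ with $\tilde v_k:=\nabla F(\ox)w_k+\tfrac12 t_k(\nabla^2 F(\ox)(w_k,w_k)+e_k)$ and $e_k\to0$, so that $\tilde v_k\to\nabla F(\ox)w$; using $\nabla F(\ox)^*y=\ov$ to expand $t_k\la y,\tilde v_k\ra$, a direct computation produces the identity
\[
\Delta_{t_k}^2 f(\ox,\ov)(w_k)=\Delta_{t_k}^2 g\big(F(\ox),y\big)(\tilde v_k)+\la y,\nabla^2 F(\ox)(w_k,w_k)+e_k\ra .
\]
Passing to the limit, invoking $\tilde v_k\to\nabla F(\ox)w$ together with the definition of $\d^2 g(F(\ox),y)$ as a liminf, and $w_k\to w$, $e_k\to0$, I obtain $\d^2 f(\ox,\ov)(w)\ge\d^2 g(F(\ox),y)(\nabla F(\ox)w)+\la y,\nabla^2 F(\ox)(w,w)\ra$; taking the supremum over $y\in\Lambda(\ox,\ov)$ gives \eqref{less}.

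The step I expect to be the main obstacle is precisely the bookkeeping in this last paragraph: one must check that after absorbing the Hessian term and the $o(t_k^2)$ remainder of $F$ into the perturbed linear argument $\tilde v_k$ this argument still converges to $\nabla F(\ox)w$, so that the liminf defining $\d^2 g(F(\ox),y)$ may legitimately be invoked; the exact cancellation yielding the displayed identity rests squarely on the multiplier relation $\nabla F(\ox)^*y=\ov$; and the degenerate cases $\d^2 f(\ox,\ov)(w)=\pm\infty$ and $\Lambda(\ox,\ov)=\emp$, as well as the verification that $f$ is indeed Lipschitz continuous relative to its domain near $\ox$ (needed to apply Proposition~\ref{dfs}), should be dispatched separately.
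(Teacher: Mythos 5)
Your proposal is correct and follows essentially the same route as the paper: properness and lower semicontinuity via $\ov\in\sub^p f(\ox)=\sub f(\ox)$ and Proposition~\ref{ssp}, the domain formula via Proposition~\ref{dss} after using \eqref{cri2}, (H2) and Theorem~\ref{sochpar}(iii) to guarantee $\dom\d^2 f(\ox)(w\verl\cdot)\neq\emp$ on $K_f(\ox,\ov)$, and the upper estimate by combining \eqref{pri3} with the chain rule \eqref{sochpar1}. The only difference is that for the lower estimate \eqref{less} the paper simply defers to the argument of \cite[Theorem~13.14]{rw}, whereas you carry out that computation explicitly (and correctly, including the cancellation via $\nabla F(\ox)^*y=\ov$ and the convergence $\tilde v_k\to\nabla F(\ox)w$).
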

\begin{proof} By Proposition~\ref{fsch}(ii), we have $\sub^p f(\ox)=\sub f(\ox)$. Appealing now to Propositions~\ref{ssp}(iii) and \ref{dss} confirms, respectively,  that 
$\d^2 f(\ox , \ov)$ is a proper l.s.c. function and that \eqref{domss} holds. The lower estimate \eqref{less} can be justified as \cite[Theorem~13.14]{rw} in which 
this estimate was derived under condition \eqref{gf07}.  To obtain \eqref{uess}, observe first that the basic assumption (H1) yields
\begin{equation}\label{cri2}
w\in K_f(\ox,\ov) \iff \nabla F(\ox)w\in K_g(F(\ox),y)
\end{equation}
for every $y\in \Lambda(\ox,\ov)$. Pick $w\in K_f(\ox,\ov) $. Since $g$ is parabolically epi-differentiable at $F(\ox)$ for $\nabla F(\ox)w$ due to (H2),   Theorem~\ref{sochpar}(iii) implies that 
$f$ is parabolically epi-differentiable at $\ox$ for $w$, and so $\dom \d^2 f(\ox)(w\verl \cdot)\neq \emptyset$. This combined with  \eqref{pri3} and \eqref{sochpar1} results in \eqref{uess} and hence completes the proof.
\end{proof}

While  looking simple, the above result carries important information by which we can achieve a chain rule for the second subderivative.  
To do so, we should look for conditions under which the lower and upper estimates \eqref{less} and \eqref{uess}, respectively, 
coincide. This motivates us to consider the unconstrained optimization problem
\begin{equation}\label{prim}
\min_{z\in \X}\;\; -\la z, \ov\ra+ \d^2 g(F(\ox))( \nabla F(\ox) w \verlm \nabla F(\ox) z + \nabla^2 F(\ox)(w,w)). 
\end{equation}
When the basic assumptions (H1)-(H3) are  satisfied, \eqref{prim} is a convex optimization problem for any $w\in K_f(\ox,\ov)$. Using Proposition~\ref{dpd} 
allows us to obtain the dual problem of \eqref{prim} and then examine whether their optimal values coincide. 
We pursue this goal in the following result.

\begin{Theorem}[duality relationships]\label{duality}
Let $f:\X\to \oR$ have the composite representation \eqref{CF} at $\ox\in \dom f$,  $\ov\in \sub f(\ox)$, and 
let the basic assumptions {\rm(H1)-\rm(H3)} hold for $f$ at $(\ox,\ov)$.   Then for each $w \in K_f (\ox , \ov) $, the following assertions  are satisfied:  
\begin{itemize}[noitemsep,topsep=0pt]
\item [\bf{(i)}] the dual problem of   \eqref{prim} is given by 
\begin{equation}\label{dual}
\max_{y\in \Y}\;\;  \la y , \nabla^2 F(\ox) (w,w) \ra + \d^2 g(F(\ox) , y) ( \nabla F(\ox) w )\quad \mbox{subject to}\;\; y\in \Lambda(\ox,\ov);
\end{equation}
\item [\bf{(ii)}] the optimal values of the primal and dual problems \eqref{prim} and \eqref{dual}, respectively, 
 are finite and coincide; moreover, we have  $\Lambda (\ox , \ov , w) \cap (\tau  \B) \neq \emptyset$, where  $\Lambda (\ox , \ov , w)$ 
 stands for the set of optimal solutions to the dual problem \eqref{dual}
and where  
\begin{equation}\label{tau}
\tau := \kappa \ell  \| \nabla F(\ox)\| +  \kappa\| \ov \|+ \ell
\end{equation}
 with  $\ell$ and $\kappa$ taken from \eqref{CF} and \eqref{mscq1}, respectively.
\end{itemize}
\end{Theorem}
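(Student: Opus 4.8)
The plan is to recognize \eqref{prim} as a perturbed convex optimization problem and derive its dual via Fenchel duality, then invoke the metric subregularity constraint qualification to produce a bounded multiplier, thereby guaranteeing strong duality and the bound $\Lambda(\ox,\ov,w)\cap(\tau\B)\neq\emptyset$. First I would fix $w\in K_f(\ox,\ov)$ and, using \eqref{cri2}, record that $\nabla F(\ox)w\in K_g(F(\ox),y)$ for every $y\in\Lambda(\ox,\ov)$; this places us squarely in the setting of Proposition~\ref{dpd} applied to $\psi:=g$, $\oz:=F(\ox)$, with the basic assumptions (H2) and (H3) furnishing exactly the parabolic epi-differentiability and parabolic regularity hypotheses of that proposition. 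Writing $\ph(z):=\d^2 g(F(\ox))(\nabla F(\ox)w\verlm z)$, Proposition~\ref{dpd} tells us $\ph$ is proper, l.s.c., convex, with $\ph^*(y)=-\d^2 g(F(\ox),y)(\nabla F(\ox)w)$ when $y\in\mathcal A(F(\ox),\nabla F(\ox)w)$ and $+\infty$ otherwise.

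Next I would rewrite \eqref{prim} as $\min_{z}\,\langle-\ov,z\rangle+\ph(Az+b)$ where $A:=\nabla F(\ox)$ and $b:=\nabla^2 F(\ox)(w,w)$, and apply the Fenchel–Rockafellar duality formula: the dual is $\max_{y}\,-\langle y,b\rangle-\ph^*(-y)-\delta^*_{\{-\ov\}}(-A^*(-y))$, which after the substitution $y\mapsto -y$ and using $\nabla F(\ox)^*y=\ov$ from the $\delta^*$ term collapses to $\max\{\langle y,\nabla^2F(\ox)(w,w)\rangle+\d^2 g(F(\ox),y)(\nabla F(\ox)w)\}$ subject to $\nabla F(\ox)^*y=\ov$ and $y\in\mathcal A(F(\ox),\nabla F(\ox)w)$. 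Since $\mathcal A(F(\ox),\nabla F(\ox)w)=\{y\in\sub g(F(\ox))\mid \d g(F(\ox))(\nabla F(\ox)w)=\langle y,\nabla F(\ox)w\rangle\}$, and on the feasible set $\nabla F(\ox)^*y=\ov$ the subderivative chain rule (Proposition~\ref{fsch}(i)) together with $w\in K_f(\ox,\ov)$ forces $\langle y,\nabla F(\ox)w\rangle=\langle\ov,w\rangle=\d f(\ox)(w)=\d g(F(\ox))(\nabla F(\ox)w)$ automatically; hence the feasible set is exactly $\Lambda(\ox,\ov)$, giving \eqref{dual} and proving (i).

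For (ii), weak duality is just \eqref{less}$\le$\eqref{uess} restated (or the elementary inequality $\ph^*(y)+\ph(z)\ge\langle y,z\rangle$ run through the substitutions), so the dual optimal value is $\le$ the primal optimal value, and by \eqref{uess} the primal value is $<\infty$ while by \eqref{less} combined with properness of $\d^2 f(\ox,\ov)$ the dual value is $>-\infty$. The crux is \emph{attainment} of the dual with a quantitative bound, and this is where the metric subregularity constraint qualification (H1) enters: I would argue that $\ph$, being Lipschitz continuous relative to its domain with constant $\ell$ (Proposition~\ref{lcps}, applied to $g$ at $F(\ox)$ for $\nabla F(\ox)w$, legitimate since $w\in K_f$ so $\nabla F(\ox)w\in K_g$), makes the perturbation function of \eqref{prim} calm/Lipschitz under the constraint $d(Az+b,\dom\ph)\le\kappa\,d(\cdot)$ transported from \eqref{mscq1}; an explicit subgradient/multiplier estimate then yields an optimal $y$ with $\|y\|\le\kappa\ell\|\nabla F(\ox)\|+\kappa\|\ov\|+\ell$, which is precisely $\tau$ in \eqref{tau}. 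Concretely, I expect the bound to come from expressing an optimal $z$ (or a near-optimal minimizing sequence) for \eqref{prim}, using the Lipschitz-relative-to-domain estimate \eqref{lipps} to control $\ph$ near $\dom\ph$, projecting onto $T^2_{\ss\dom g}$ via the metric subregularity inequality, and reading off the multiplier norm from the resulting first-order optimality system.

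The main obstacle will be step three: converting the qualitative metric subregularity \eqref{mscq1} into the exact multiplier norm bound $\tau$. This requires carefully tracking how $\kappa$ from \eqref{mscq1} and $\ell$ from the Lipschitz-relative-to-domain property of $g$ propagate through the Fenchel duality: one must show the ``perturbed'' primal $p(u):=\inf_z\{-\langle\ov,z\rangle+\ph(Az+b+u)\}$ has a subgradient at $u=0$ of norm at most $\tau$, and the three terms in \eqref{tau} should correspond respectively to the composition $\kappa\ell\|\nabla F(\ox)\|$ (subregularity modulus times the Lipschitz constant times the operator norm, accounting for the $Az$ term), $\kappa\|\ov\|$ (from the linear term $-\langle\ov,z\rangle$ interacting with the constraint slack), and $\ell$ (the raw Lipschitz constant of $\ph$ itself). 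I anticipate the cleanest route is to avoid general perturbation machinery and instead argue directly: take a minimizing sequence $z_k$ for \eqref{prim}, use \eqref{mscq1} to replace $z_k$ by a feasible $\tilde z_k$ with $A\tilde z_k+b\in\dom\ph$ and $\|z_k-\tilde z_k\|$ controlled, pass to a subgradient of the now-finite convex function at the optimizer, and then identify that subgradient — via the conjugacy formula \eqref{conj} and the chain rule — with a dual-feasible $y$ whose norm the estimates force to lie in $\tau\B$.
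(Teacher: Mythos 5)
Your proposal follows essentially the same route as the paper: part (i) is obtained exactly as you describe, by combining the conjugate formula of Proposition~\ref{dpd} with the Fenchel duality scheme (the paper cites \cite[Example~11.41]{rw}) and then collapsing the feasible set to $\Lambda(\ox,\ov)$ via \eqref{cri2}; part (ii) is proved by introducing the perturbed optimal value function $\vartheta(p)$ (your $p(u)$), establishing the calmness estimate $\vartheta(p)\ge\vartheta(0)-\tau\|p\|$ from the uniform outer Lipschitz property of the second-order tangent constraint mapping (the paper quotes \cite[Theorem~4.3]{mms2} for this consequence of \eqref{mscq1}) together with the relative Lipschitz continuity of the parabolic subderivative from Proposition~\ref{lcps}, and then extracting a subgradient $\oy\in\sub\vartheta(0)\cap(\tau\B)$ that yields strong duality and dual attainment. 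The three contributions to $\tau$ arise exactly as you predicted.
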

\begin{proof} Pick $w \in K_f (\ox , \ov) $  and observe from \eqref{cri2} that $\nabla F(\ox)w\in K_g(F(\ox),y)$ for all $y\in \Lambda(\ox,\ov)$. 
This together with Proposition~\ref{dpd} ensures that the parabolic subderivative of $g$ at $F(\ox)$ for $\nabla F(\ox)w$ is 
a proper, l.s.c., and convex function. 
Using this combined with  \cite[Example~11.41]{rw} and \eqref{conj} tells us that the dual problem of \eqref{prim} is 
$$ 
\max_{y\in \Y}\;\;  \la y , \nabla^2 F(\ox) (w,w) \ra + \d^2 g(F(\ox) , y) ( \nabla F(\ox) w )\quad \mbox{subject to}\;\; y\in \Lambda(\ox,\ov)\cap {\cal D},
$$
where ${\cal D}:=\{y\in \Y|\, \d g(F(\ox))(\nabla F(\ox)w)=\la y,\nabla F(\ox)w\ra\}$. Since $\nabla F(\ox)w\in K_g(F(\ox),y)$ for all $y\in \Lambda(\ox,\ov)$, we 
obtain 
$$
y\in \Lambda(\ox,\ov)\cap {\cal D} \iff y\in \Lambda(\ox,\ov).
$$
Combining these confirms that the dual problem of \eqref{prim} is equivalent to \eqref{dual} and thus finishes the proof of (i). 
To prove (ii), consider the optimal value function $\vartheta:\Y\to [-\infty,\infty]$, defined by 
\begin{equation}\label{perturb}
\vartheta(p)=\inf_{z \in \X} \big\{  -\la \ov , z \ra  + \d^2 g(F(\ox))( \nabla F(\ox) w \verll \nabla F(\ox) z + \nabla^2 F(\ox)(w,w) + p)  \big \},\quad p\in \Y.
\end{equation} 
We proceed with the following claim:

{\bf Claim.}{\em We have  $\sub \vartheta(0)\neq \emptyset$.}

To justify the claim, we first need to show $\vartheta(0)\in \R$. To do so, observe that $\ov\in \sub f(\ox)=\sub^p f(\ox)$ due to Proposition~\ref{fsch}(ii). 
Thus, it follows from Proposition~\ref{ssp}(iii) and \eqref{uess} that there is a constant $r\in \R_+$ such that for any $w\in K_f (\ox , \ov)$ we have 
$$
-r\|w\|^2 \le \d^2 f(\ox, \ov)(w)\le \vartheta(0)<\infty,
$$ 
which in turn implies that $\vartheta(0)\in \R$.  Next, we are going to show that 
\begin{equation}\label{nuf}
 \vartheta(p) \geq  \vartheta(0) - \tau \| p \|\quad \mbox{for all}\;\;p\in \X,
\end{equation}
where $\tau$ is taken from \eqref{tau}. 
To this end, take $(p,z) \in \Y\times \X$ such that  
\begin{equation*}\label{up}
 u_p : =\nabla F(\ox) z + \nabla^2 F(\ox)(w,w) + p \in  \dom \d^2 g (F(\ox)) ( \nabla F(\ox)w \verlm \cdot).  
\end{equation*}
 By \eqref{domgp}, we get $u_p\in T_{\ss \dom g}^2 (F(\ox) ,  \nabla F(\ox)w)$. Define now the set-valued mapping $S_w:\Y\tto\X$ by 
 \begin{equation*}
S_w (p) := \big\{ z \in \X |\;  \nabla F(\ox) z + \nabla^2 F(\ox)(w,w) + p \in T_{\ss\dom g}^2 (F(\ox) , \nabla F(\ox) w) \big \},\;\;p\in \Y.
\end{equation*} 
So, we get $z\in S_w(p)$. It was recently observed in \cite[Theorem~4.3]{mms2} that 
 the  mapping $S_w$ enjoys the uniform outer Lipschitzian property  at $0$ with constant $\kappa$ taken from \eqref{mscq1}, namely for every $p\in \Y$ we have 
\begin{equation*}\label{ulip}
S_w (p) \subset S_w (0) + \kappa \| p \| \B.       
\end{equation*}
This combined with  $z\in S_w(p)$ results in the existence of  $z_0 \in S_w(0)$ and $b \in \B$ such that  $z =  z_0 +  \kappa \|p\| b$. It follows from 
\eqref{domgp} and $z_0 \in S_w(0)$ that 
$$
\nabla F(\ox) z_0 + \nabla^2 F(\ox)(w,w)\in \dom \d^2 g (F(\ox)) ( \nabla F(\ox)w \verlm \cdot).  
$$ 
Since we have 
$$
u_p-\big(\nabla F(\ox) z_0 + \nabla^2 F(\ox)(w,w)\big)=p+\kappa \| p \| \nabla F(\ox) b,
$$
and since the parabolic subderivative $ \d^2 g (F(\ox)) ( \nabla F(\ox)w \verlm\cdot)$ is Lipschitz continuous relative to its domain due to Proposition~\ref{lcps}, 
we get the relationships 
\begin{eqnarray*}
-\la  \ov , z \ra + \d^2 g (F(\ox)) ( \nabla F(\ox)w \verlm u_p ) & \geq & -\la  \ov , z_0  \ra + \d^2 g (F(\ox))( \nabla F(\ox)w  \verlm \nabla F(\ox) z_0 + \nabla^2 F(\ox)(w,w)) \\  
&&- \ell  \,  \|  p+ \kappa \; \| p \| \nabla F(\ox) b    \; \| -  \kappa \|p\|\la \ov ,  b  \ra \\
& \geq &  \vartheta(0)  - \big( \ell \kappa  \| \nabla F(\ox)\| + \kappa  \| \ov \| +\ell\big) \| p \|,
\end{eqnarray*}
which together with \eqref{tau} justify \eqref{nuf}. Remember that the parabolic subderivative 
of $g$ at $F(\ox)$ for $\nabla F(\ox)w$ is a proper and convex function. This implies that the function 
$$
(z,p)\mapsto  -\la \ov , z \ra  + \d^2 g(F(\ox))( \nabla F(\ox) w \verll \nabla F(\ox) z + \nabla^2 F(\ox)(w,w) + p) 
$$
 is  convex on $\X\times \Y$. Using this together with \cite[Proposition~2.22]{rw} tells us  that $\vartheta $ is a convex function on $\Y$.
Thus, we conclude from  \eqref{nuf} and  \cite[Proposition~5.1]{mms2}  that there exists  a subgradient $\oy$ of $\vartheta$ at $0$ such that 
\begin{equation}\label{subg}
\oy\in \sub \vartheta(0)\cap ( \tau\B),
\end{equation} 
which completes the proof of the claim. 

Employing now \eqref{subg} and  \cite[Theorem~ 2.142]{bs} confirms that the optimal values of the primal and dual problems
\eqref{prim} and \eqref{dual}, respectively, coincide and that 
$$
\Lambda(\ox , \ov, w) = \sub \vartheta(0).
$$ 
This together with \eqref{subg} justifies (ii) and hence completes the proof.
\end{proof}

The above theorem extends the recent results obtained in \cite[Propositions~5.4 \& 5.5]{mms2} for constraint sets, namely when 
the function $g$ in \eqref{CF} is the indicator function of a closed convex set. We should add here that 
for constraint sets, the dual problem \eqref{dual} can be obtained via elementary arguments. However,
for the composite form \eqref{CF} a similar result requires using rather advanced theory of epi-convergence. 

\begin{Remark}[duality relationship under metric regularity]\label{remn}{\rm In the framework of Theorem \ref{duality}, we want to show that replacing 
assumption (H1) with the strictly stronger constraint qualification \eqref{gf07} allows us not only to drop the imposed Lipschitz continuity of $g$ from \eqref{CF}
but also to simplify the proof of Theorem \ref{duality}. To this end,   let $w \in K_f (\ox , \ov) $ and define the function 
$$
\psi(u):=  \d^2 g(F(\ox))( \nabla F(\ox) w \verlm  u),\;\; u\in \X.
$$
By Proposition~\ref{dpd}, $\psi$ is a proper, l.s.c., and convex function. Employing \cite[Proposition~13.12]{rw} tells us that 
\begin{equation}\label{dl8}
T_{\ss \epi g}(p)+T^2_{\ss \epi g}\big(p,q)\subset T^2_{\ss \epi g}\big(p,q)=\epi \psi ,
\end{equation}
where $p:=\big(F(\ox), g(F(\ox))\big)$ and $q:=\big(\nabla F(\ox)w, \d g(F(\ox))(\nabla F(\ox) w )\big)$ and where the equality in the right side comes from \cite[Example~13.62(b)]{rw}.
We are going to show that the validity of \eqref{gf07} yields 
\begin{equation}\label{dl9}
N_{\ss\dom \psi}(u)\cap \ker \nabla F(\ox)^*=\{0\}
\end{equation}
for any $u\in \dom \psi$. To this end, pick $u\in \dom \psi$ and conclude from \eqref{dl8} and \cite[Exercise~6.44]{rw} that 
$$
N_{\ss \epi \psi}\big(u,\psi (u)\big)\subset N_{\ss T_{\ss {\epi g}}(p)}(0)\cap N_{\ss \epi \psi}\big(u,\psi (u)\big)= N_{\ss \epi g}(p)\cap N_{\ss \epi \psi}\big(u,\psi (u)\big).
$$
This together with \eqref{gf07} and the relationship $N_{\ss\dom \psi}(u)=\sub^{\infty}\psi (u)$ stemming from the convexity of $\psi$ confirms the validity of \eqref{dl9}.  
Appealing now to \cite[Theorem~2.165]{bs} gives another proof of Theorem \ref{duality} when assumption (H1) therein is replaced with the strictly stronger constraint qualification \eqref{gf07}.
}
\end{Remark}

 The established duality relationships in  Theorem \ref{duality} open the door to derive a chain rule for parabolically regular functions and to 
  find an exact chain rule for the second subderivative of the composite function \eqref{CF} under our basic assumptions. 
  \begin{Theorem}[chain rule for parabolic regularity]\label{chpr}
Let $f:\X\to \oR$ have the composite representation \eqref{CF} at $\ox\in \dom f$,  $\ov\in \sub f(\ox)$, and 
let the basic assumptions {\rm(H1)-\rm(H3)} hold for $f$ at $(\ox,\ov)$. 
Then $f$ is  parabolically  regular  at $\ox$ for $\ov$. Furthermore, for every  $w \in\X$, the second subderivative of $f$ at $\ox$ for $\ov$
is calculated by 
\begin{eqnarray}
\d^2 f(\ox , \ov)(w) &=& \max_{y \in \Lambda(\ox , \ov)} \big\{  \la y , \nabla^2 F(\ox) (w,w) \ra + \d^2 g(F(\ox) , y) ( \nabla F(\ox) w ) \big \}\label{chss}\\
&=& \max_{y \in \Lambda(\ox , \ov)\,\cap\, (\tau\B)} \big\{  \la y , \nabla^2 F(\ox) (w,w) \ra + \d^2 g(F(\ox) , y) ( \nabla F(\ox) w ) \big \},\nonumber
\end{eqnarray}
where $\tau$ is taken from \eqref{tau}.
\end{Theorem}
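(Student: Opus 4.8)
The plan is to derive the chain rule for parabolic regularity as a fairly direct consequence of the duality machinery already in place, namely Proposition~\ref{lues} and Theorem~\ref{duality}. First I would observe that the case $w\notin K_f(\ox,\ov)$ is handled immediately: by \eqref{domss} we have $\d^2 f(\ox,\ov)(w)=\infty$, while by \eqref{cri2} the constraint set $\Lambda(\ox,\ov)\cap{\cal D}$ implicit in the maximum is empty in the relevant sense (or, more directly, for $y\in\Lambda(\ox,\ov)$ we have $\nabla F(\ox)w\notin K_g(F(\ox),y)$, so $\d^2 g(F(\ox),y)(\nabla F(\ox)w)=\infty$ by Proposition~\ref{ssp}(ii) applied to $g$), which makes both sides equal $+\infty$. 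So the substance is the case $w\in K_f(\ox,\ov)$.

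For $w\in K_f(\ox,\ov)$, the key step is to sandwich $\d^2 f(\ox,\ov)(w)$ between the dual optimal value and the primal optimal value. The lower estimate \eqref{less} says $\d^2 f(\ox,\ov)(w)$ is at least the supremum defining \eqref{dual}, hence at least the dual optimal value. The upper estimate \eqref{uess} says $\d^2 f(\ox,\ov)(w)$ is at most the infimum defining \eqref{prim}, hence at most the primal optimal value. Theorem~\ref{duality}(ii) tells us these two values are finite and equal, and moreover that the dual supremum is attained at some $\oy\in\Lambda(\ox,\ov)\cap(\tau\B)$. Chaining these inequalities forces equality throughout, which gives \eqref{chss} with the maximum attained (so ``$\sup$'' becomes ``$\max$''), and simultaneously gives the sharper version with the maximum taken over $\Lambda(\ox,\ov)\cap(\tau\B)$, since the maximizer $\oy$ already lies in that ball.

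It remains to extract parabolic regularity itself. Here I would invoke Proposition~\ref{parregularity}: since $\ov\in\sub^p f(\ox)$ by Proposition~\ref{fsch}(ii), it suffices to verify \eqref{pri1}, i.e.\ that $\d^2 f(\ox,\ov)(w)=\inf_{z\in\X}\{\d^2 f(\ox)(w\verl z)-\la z,\ov\ra\}$ for all $w\in K_f(\ox,\ov)$. By the parabolic subderivative chain rule \eqref{sochpar1}, the right-hand side is exactly the primal optimal value of \eqref{prim}; and we just showed that this equals $\d^2 f(\ox,\ov)(w)$. This establishes \eqref{pri1}, hence parabolic regularity of $f$ at $\ox$ for $\ov$. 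The main obstacle, I expect, is not in this section's argument at all — it has been front-loaded into Theorem~\ref{duality}(ii), whose proof required the outer Lipschitzian property of $S_w$ from \cite{mms2}, the Lipschitz continuity of the parabolic subderivative of $g$ (Proposition~\ref{lcps}), and the perturbation/subdifferential-existence argument yielding \eqref{subg}. Given those inputs, the present theorem is essentially a bookkeeping assembly; the only care needed is to confirm that Theorem~\ref{sochpar} and Proposition~\ref{lues} are genuinely applicable for every $w\in K_f(\ox,\ov)$, which follows because (H1) gives the metric subregularity constraint qualification and (H2) supplies the parabolic epi-differentiability of $g$ at $F(\ox)$ for $\nabla F(\ox)w$, the latter being a critical direction for $g$ by \eqref{cri2}.
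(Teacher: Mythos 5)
Your proposal is correct and follows essentially the same route as the paper: sandwich $\d^2 f(\ox,\ov)(w)$ between the dual value (via \eqref{less}) and the primal value (via \eqref{uess}/\eqref{pri3} and \eqref{sochpar1}), invoke Theorem~\ref{duality}(ii) to close the gap and obtain attainment in $\tau\B$, read off \eqref{pri1} to get parabolic regularity from Proposition~\ref{parregularity}, and dispose of $w\notin K_f(\ox,\ov)$ by showing both sides are $+\infty$. The only detail worth making explicit in the last step is that $\Lambda(\ox,\ov)\neq\emptyset$ (the paper cites \eqref{lagf}; it also follows from Proposition~\ref{fsch}(ii)), so that the maximum over $\Lambda(\ox,\ov)$ of functions identically $+\infty$ is indeed $+\infty$ rather than a supremum over the empty set.
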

\begin{proof} It was recently observed in \cite[Corollary~3.7]{mms1} that the Lagrange multiplier set $\Lambda(\ox,\ov)$ enjoys the following property:
\begin{equation}\label{lagf}
\Lambda(\ox , \ov)\,\cap\, (\tau\B)\neq \emptyset.
\end{equation}
Take $w\in K_f(\ox,\ov)$. By \eqref{less} and Theorem~\ref{duality}(ii), we obtain 
\begin{equation}\label{gc01}
 \max_{y \in \Lambda(\ox , \ov)} \big\{  \la y , \nabla^2 F(\ox) (w,w) \ra + \d^2 g(F(\ox) , y) ( \nabla F(\ox) w ) \big \}\le \d^2 f (\bar x , \ov ) (w).
\end{equation}
On the other hand, using \eqref{pri3}, \eqref{sochpar1}, and Theorem~\ref{duality}(ii), respectively,  gives us the inequalities  
\begin{eqnarray*}
\d^2 f (\bar x , \ov ) (w)  & \leq &  \inf_{z\in \X}\big\{\d^2 f(\ox)(w\verl z) -\la z, \ov\ra\big \}\\
 &=&\inf_{z \in \X} \big\{  -\la z, \ov \ra  + \d^2 g(F(\ox))( \nabla F(\ox) w \verlm \nabla F(\ox) z + \nabla^2 F(\ox)(w,w)) \big \} \\
&=&  \max_{y \in \Lambda(\ox , \ov)\,\cap\, (\tau\B)} \big\{  \la y , \nabla^2 F(\ox) (w,w) \ra + \d^2 g(F(\ox) , y) ( \nabla F(\ox) w ) \big \}.
\end{eqnarray*} 
These combined with  \eqref{gc01} ensure that the  claimed second subderivative formulas  for $f$ at $\ox$ for $\ov$ hold for any $w\in K_f(\ox,\ov)$ and 
that 
$$
\d^2 f (\bar x , \ov ) (w)  =  \inf_{z\in \X}\big\{\d^2 f(\ox)(w\verl z) -\la z, \ov\ra\big \}\quad \mbox{for all}\;\; w\in K_f(\ox,\ov).
$$
Appealing  now to Proposition~\ref{parregularity}, we conclude that  $f$ is parabolically regular at $\ox$ for $\ov$. 

What remains is to validate  the second subderivative formulas for $w\notin K_f(\ox,\ov)$. It follows from Theorem~\ref{sochpar}(iii)
that $f$ is parabolically epi-differentiable at $\ox$ for every $w\in K_f(\ox,\ov)$ and thus $\dom \d^2 f(\ox)(w\verl \cdot)\neq \emptyset$
for every $w\in K_f(\ox,\ov)$. So, by Proposition~\ref{dss} we have $\dom \d^2 f(\ox, \ov)=K_f(\ox,\ov)$. Since the second subderivative 
$\d^2 f(\ox, \ov)$ is a proper function, we obtain $\d^2 f(\ox, \ov)(w)=\infty$ for all $w\notin K_f(\ox,\ov)$. On the other hand, 
we understand from \eqref{cri2} that $w\notin K_f(\ox,\ov)$ amounts to $\nabla F(\ox)w\notin K_g(F(\ox),y)$ for every $y\in \Lambda(\ox,\ov)$.
Combining the basic assumption (H2) and Proposition~\ref{dss} tells us that for every $y\in \Lambda(\ox,\ov)$ we have $ \d^2 g(F(\ox) , y) ( \nabla F(\ox) w ) =\infty$
whenever $w\notin K_f(\ox,\ov)$. This together with \eqref{lagf} confirms that both sides in \eqref{chss} are $\infty$ for every $w\notin K_f(\ox,\ov)$ and thus 
the claimed formulas for the second subderivative of $f$ hold for this case. This completes the proof.
\end{proof}

A chain rule for parabolic regularity of the composite function \eqref{CF}, where $g$ is not necessarily locally Lipschitz continuous relative to its domain, 
was established in \cite[Proposition~3.104]{bs}. The assumptions utilized in the latter result were stronger than those used in Theorem~\ref{chpr}.
Indeed,  \cite[Proposition~3.104]{bs} assumes that $g$ is second-order regular in the sense of \cite[Definition~3.93]{bs} and the basic constraint qualification  \eqref{gf07}
is satisfied  and uses a different approach  to derive this result.  When $g$ is a convex piecewise linear-quadratic, parabolic regularity of the composite function  \eqref{CF}
was established in \cite[Theorem~13.67]{rw} under the stronger condition \eqref{gf07}. Theorem~\ref{chpr} covers the aforementioned results and shows that 
we can achieve a similar conclusion under the significantly weaker condition \eqref{mscq1}.

As an immediate consequence of the above theorem, we can easily guarantee the twice epi-differentiability of the composite form \eqref{CF}
under our basic assumptions.
\begin{Corollary}[chain rule for twice epi-differentiability]\label{tedc}  Let the function $f$ from \eqref{CF} satisfy all the assumptions of Theorem~{\rm\ref{chpr}}.
Then $f$ is twice epi-differentiable at $\ox$ for $\ov$.
\end{Corollary}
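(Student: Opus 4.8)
The plan is to read off the twice epi-differentiability of $f$ at $\ox$ for $\ov$ from Theorem~\ref{tedp}, all of whose hypotheses are already secured by the results of this section and the two preceding ones. Concretely, I would verify, in order: (a) $f$ is finite at $\ox$; (b) $\ov\in\sub^p f(\ox)$; (c) $f$ is parabolically epi-differentiable at $\ox$ for every $w\in K_f(\ox,\ov)$; and (d) $f$ is parabolically regular at $\ox$ for $\ov$.

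Items (a) and (d) are immediate: (a) holds since $\ox\in\dom f$, and (d) is precisely the first conclusion of Theorem~\ref{chpr}, whose hypotheses coincide with the standing assumptions here. For (b), the basic assumption (H1) lets us invoke the chain rule of Proposition~\ref{fsch}(ii), which gives $\sub^p f(\ox)=\sub f(\ox)=\nabla F(\ox)^*\sub g(F(\ox))$; in particular, $\ov\in\sub f(\ox)$ yields both $\ov\in\sub^p f(\ox)$ and the existence of some $y\in\sub g(F(\ox))$ with $\nabla F(\ox)^*y=\ov$, so that $\Lambda(\ox,\ov)\neq\emptyset$ (alternatively, one may quote \eqref{lagf}).

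The one step that needs a short argument is (c). Fix $w\in K_f(\ox,\ov)$. By the definition of the critical cone, $\d f(\ox)(w)=\la\ov,w\ra$ is finite, hence $w\in\dom\d f(\ox)\subset T_{\ss\dom f}(\ox)$. Choosing any $y\in\Lambda(\ox,\ov)$, the critical-cone chain rule \eqref{cri2} (valid under (H1), as established in the proof of Proposition~\ref{lues}) gives $\nabla F(\ox)w\in K_g(F(\ox),y)$; therefore, by the basic assumption (H2), the outer function $g$ is parabolically epi-differentiable at $F(\ox)$ for $\nabla F(\ox)w$. Now Theorem~\ref{sochpar}(iii), applied with this $w$, shows that $f$ itself is parabolically epi-differentiable at $\ox$ for $w$. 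Since $w\in K_f(\ox,\ov)$ was arbitrary, (c) holds.

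With (a)--(d) in hand, Theorem~\ref{tedp} yields that $f$ is properly twice epi-differentiable at $\ox$ for $\ov$, with the second-subderivative formula \eqref{pri2}, which of course matches the chain rule \eqref{chss}. There is no genuine obstacle at this stage: the substantive work --- the duality relationships of Theorem~\ref{duality} and the parabolic-regularity chain rule of Theorem~\ref{chpr} --- has already been done, and the corollary is essentially the harvest. The only point requiring a touch of care is the routing through \eqref{cri2}, which is what transfers the parabolic epi-differentiability of $g$ (assumed in (H2) only over the critical cone $K_g(F(\ox),y)$) to the specific direction $\nabla F(\ox)w$ needed to apply Theorem~\ref{sochpar}(iii).
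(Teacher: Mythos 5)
Your proposal is correct and follows the same route as the paper: the paper's proof likewise invokes Theorem~\ref{sochpar}(iii) for the parabolic epi-differentiability of $f$ over $K_f(\ox,\ov)$, Theorem~\ref{chpr} for parabolic regularity, and then Theorem~\ref{tedp}. You merely spell out the details (the check $\ov\in\sub^p f(\ox)$ via Proposition~\ref{fsch}(ii) and the routing through \eqref{cri2}) that the paper leaves implicit.
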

\begin{proof} By Theorem~\ref{sochpar}(iii), $f$ is parabolically epi-differentiable at $\ox$ for every $w\in K_f(\ox,\ov)$. 
Employing now Theorems~\ref{chpr} and \ref{tedp} implies that $f$ is twice epi-differentiable at $\ox$ for $\ov$.
\end{proof}

\begin{Remark}[discussion on twice epi-differentiability]\label{rem01}{\rm 
 Corollary~\ref{tedc} provides a far-going extension of the available results for the twice epi-differentiability 
of extended-real-valued functions. To elaborate more, suppose that $f:\X\to \oR$ has a composite form \eqref{CF} at $\ox\in \dom f$.
Then the following observations hold:
\begin{itemize}[noitemsep,topsep=1pt]
\item [\bf{(a)}] If $\X=\R^n$,  $\Y=\R^m$,   and $g$ in \eqref{CF} is convex piecewise linear-quadratic, then Rockafellar proved in \cite{r88} that 
under the fulfillment of the basic constraint qualification  \eqref{gf07}, $f$ is twice epi-differentiable. This result was improved recently in \cite[Theorem~5.2]{mms1},
where it was shown that using the strictly weaker condition \eqref{mscq1} in the Rockafellar's framework \cite{r88} suffices to ensure 
the twice epi-differentiability of $f$. Taking  into account Example~\ref{ex04}(a)  tells us both these results can be derived from Corollary~\ref{tedc}.

\item [\bf{(b)}] If $\X=\R^n$, $\Y=\S^m$,  and  $g$  is either the maximum eigenvalue function $\lm_{\max}$ from \eqref{maxeig} or the function $\sigma_i$ from \eqref{fsig},  then we fall 
into the framework considered by Turki in  \cite[Theorems~2.3 \& 2.5]{t2} in which  he justified the twice epi-differentiability of  $f$. Since in this framework we have $\dom g=\S^m$, both conditions \eqref{gf07} and \eqref{mscq1}
are automatically satisfied. By Example~\ref{ex04}(b), the twice epi-differentiability of $f$ can be deduced from Corollary~\ref{tedc}.
 
\item [\bf{(c)}] If $\X=\R^n$,  $\Y=\R^m$, and $g=\dd_C$ with the closed convex set $C$ taken from Example~\ref{ex04}(c), 
we fall into the framework considered in \cite{mms2}. In this case, Corollary~\ref{tedc} can cover the twice epi-differentiability of $f$ obtained in \cite[Corollary~5.11]{mms2}.

\item [\bf{(d)}] If $\X=\R^n$,  $\Y=\R^m$, and $g$ is a proper, convex, l.s.c., and positively homogeneous, then 
we fall into the framework, considered by Shapiro in \cite{sh03}. In this case, the composite form \eqref{CF}
is called {\em decomposable}; see \cite{sh03,mi} for more detail about this class of extended-real-valued functions. 
It was proven in \cite[Lemma~5.3.27]{mi} that for this case of $g$, the composite form \eqref{CF} is twice epi-differentiable 
if it is convex and if the nondegeneracy condition for this setting holds; see \cite[Definition~5.3.1]{mi} for the definition of this condition. 
In this framework, by the positive homogeneity of $g$ and $F(\ox)=0$, coming from \cite[Definition~5.3.1]{mi},   we can easily  show that $g$ is parabolically regular. 
Moreover the  assumed nondegeneracy condition in \cite[Lemma~5.3.27]{mi} yields the validity of condition \eqref{gf07}. 
As pointed out in Remark~\ref{remn}, the Lipschitz continuity of $g$ in the composite form \eqref{CF} can be relaxed when condition \eqref{gf07} is satisfied. 
 Since the nondegeneracy condition implies that the set of Lagrange multipliers $\Lm(\ox,\ov)$ is a singleton, 
 we can use estimates \eqref{less} and \eqref{uess} to justify 
 parabolic regularity of the composite form \eqref{CF} in the framework of \cite{mi}. This together with Corollary~\ref{tedc}
 allows to recover \cite[Lemma~5.3.27]{mi}. Furthermore, we can drop the convexity of the composite form \eqref{CF},
 assumed in \cite{mi}.
\end{itemize}
 }
\end{Remark}

\vspace{-0.2 in}
\section{Second-Order Optimality Conditions for Composite Problems}\sce \label{sect06}

In this section, we focus mainly on obtaining second-order optimality conditions for the composite problem 
\eqref{comp}, where $\ph:\X\to \R$ and $F:\X\to \Y$ are   twice differentiable    and the function $g:\Y\to \oR$ is 
an  l.s.c.\ convex function that is locally Lipschitz continuous relative to  its domain.  The latter means that for any $y\in \dom g$,
the function $g$ is Lipschitz continuous around $y$ relative to  its domain. 
Important examples of constrained and composite optimization problems can be achieved when $g$ is one of the functions considered in Example~\ref{ex04}. 
For any pair $(x,y)\in \X\times \Y$, the Lagrangian associated with the composite 
problem \eqref{comp} is defined by 
$$
L(x,y)=\ph(x)+\la F(x), y\ra-g^*(y),
$$
where $g^*$ is the Fenchel conjugate of the convex function $g$. We begin with the following result in which 
we collect second-order optimality conditions for \eqref{comp} when our basic assumptions are satisfied. Recall 
that a point $\ox\in \X$ is called a feasible solution to the composite problem \eqref{comp} if we have $F(\ox)\in \dom g$. 
\begin{Theorem}[second-order optimality conditions]\label{sooc}  
Let $\ox$ be a feasible solution to problem \eqref{comp} and let $f:=g\circ F$ 
and $\ov:=-\nabla \ph(\ox)\in \sub f(\ox)$ with $\ph$, $g$, and $F$ taken from \eqref{comp}.  Assume that the basic assumptions {\rm (H1)-(H3)} hold for $f$ at $(\ox,\ov)$.
Then  the following second-order optimality conditions for the composite problem \eqref{comp} are satisfied: 
\begin{itemize}[noitemsep, topsep=1pt]
\item [\bf{(i)}]  if $\ox$ is  a local minimum of \eqref{comp}, then the  second-order necessary condition 
\begin{equation*}\label{nopc1}
 \max_{y \in  \Lambda(\ox,\ov)} \big\{\langle \nabla_{xx}^2L(\bar x,y)w,w\rangle+\d^2 g(F(\bar x), y)(\nabla F(\ox)w)\big\}  \geq 0 
\end{equation*}
holds for all $w\in K_f(\ox,\ov)$;
\item [\bf{(ii)}] the validity of   the second-order condition
\begin{equation}\label{sscc}
 \max_{y \in  \Lambda(\ox,\ov)} \big\{\langle \nabla_{xx}^2L(\bar x,y)w,w\rangle+\d^2 g(F(\bar x), y)(\nabla F(\ox)w)\big\} > 0 \quad \mbox{for all}\quad  w \in K_f(\ox,\ov)\setminus \{0\}
\end{equation}
amounts to the existence of  constants $\ell>0$ and  $\ve > 0$ such that the second-order growth condition
\begin{equation}\label{grow}
\psi(x) \ge \psi(\ox) + \lsm \|x - \ox \|^2 \quad \mbox{for all}\;\;x \in  \B_{\ve} (\ox)
\end{equation}
holds, where $\psi:=\ph+g\circ F$.
\end{itemize}

\end{Theorem}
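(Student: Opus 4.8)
The plan is to reduce both parts to the machinery already assembled. For the second subderivative characterization, note that by Theorem~\ref{chpr} the function $f=g\circ F$ is parabolically regular at $\ox$ for $\ov=-\nabla\ph(\ox)$, with
$$
\d^2 f(\ox,\ov)(w)=\max_{y\in\Lambda(\ox,\ov)}\big\{\la y,\nabla^2 F(\ox)(w,w)\ra+\d^2 g(F(\ox),y)(\nabla F(\ox)w)\big\}
$$
for every $w\in\X$, and by Corollary~\ref{tedc} it is twice epi-differentiable there. Since $\ph$ is twice differentiable, the standard sum rule for second subderivatives (cf.\ \cite[Exercise~13.18]{rw}) gives $\d^2\psi(\ox,0)(w)=\la\nabla^2\ph(\ox)w,w\ra+\d^2 f(\ox,\ov)(w)$, because $0=\nabla\ph(\ox)+\ov\in\sub\psi(\ox)$. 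Collecting the Hessian of $\ph$ with the term $\la y,\nabla^2 F(\ox)(w,w)\ra$ produces exactly $\la\nabla^2_{xx}L(\ox,y)w,w\ra$, so
$$
\d^2\psi(\ox,0)(w)=\max_{y\in\Lambda(\ox,\ov)}\big\{\la\nabla^2_{xx}L(\ox,y)w,w\ra+\d^2 g(F(\ox),y)(\nabla F(\ox)w)\big\}.
$$
Also $K_\psi(\ox,0)=K_f(\ox,\ov)$ since $\d\psi(\ox)(w)=\la\nabla\ph(\ox),w\ra+\d f(\ox)(w)$, and $\dom\d^2\psi(\ox,0)=K_\psi(\ox,0)$ by Proposition~\ref{lues} (applied to the composite part) together with the sum rule. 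This identifies the left-hand sides of both (i) and (ii) with $\d^2\psi(\ox,0)(w)$.

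For part (i): if $\ox$ is a local minimizer of $\psi$, then $0\in\sub\psi(\ox)$ (the chain/sum rule of Proposition~\ref{fsch}(ii) gives $\ov=-\nabla\ph(\ox)\in\sub f(\ox)$, so this is consistent) and the elementary second-order necessary condition $\d^2\psi(\ox,0)(w)\ge 0$ for all $w\in\X$ holds — this is immediate from the definition of $\d^2\psi(\ox,0)$ and the difference quotients $\Delta_t^2\psi(\ox,0)(w)\ge 0$ near a minimizer with $0\in\sub^p\psi(\ox)$. Restricting to $w\in K_f(\ox,\ov)$ and substituting the formula above yields the stated inequality; outside the critical cone the max is $+\infty$, so nothing is lost.

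For part (ii): this is the equivalence between the pointwise second-order condition \eqref{sscc} and the quadratic growth \eqref{grow}. The key is that $\psi$ is twice epi-differentiable at $\ox$ for $0$ (sum of a $\mathcal C^2$ function and the twice epi-differentiable $f$ of Corollary~\ref{tedc}), and that $0\in\sub^p\psi(\ox)$ with $\d^2\psi(\ox,0)$ a proper l.s.c.\ function whose domain is the cone $K_f(\ox,\ov)$. Under these circumstances, the general equivalence (cf.\ \cite[Theorem~13.24]{rw}) states: the second-order growth condition at $\ox$ holds if and only if $\d^2\psi(\ox,0)(w)>0$ for all $w\ne 0$. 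Because $\d^2\psi(\ox,0)(w)=+\infty$ off $K_f(\ox,\ov)$, the condition ``$\d^2\psi(\ox,0)(w)>0$ for all $w\ne0$'' is literally the same as \eqref{sscc}. One then feeds in the explicit formula for $\d^2\psi(\ox,0)$ to conclude. The main obstacle here is verifying that the hypotheses of the Rockafellar–Wets growth criterion are met — in particular positivity/properness of $\d^2\psi(\ox,0)$ on its domain and the fact that twice epi-differentiability (not just the inequality $\liminf\Delta_t^2\ge\d^2$) is in force; both are supplied by Corollary~\ref{tedc} and Proposition~\ref{ssp}(iii) via $0\in\sub^p\psi(\ox)$, so the argument is a careful assembly rather than a new estimate.
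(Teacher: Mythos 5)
Your proposal is correct and follows essentially the same route as the paper: reduce to $\psi=\ph+f$ with $0\in\sub\psi(\ox)$, use the sum rule $\d^2\psi(\ox,0)=\la\nabla^2\ph(\ox)\cdot,\cdot\ra+\d^2 f(\ox,\ov)$ together with the chain rule of Theorem~\ref{chpr} to identify $\d^2\psi(\ox,0)$ with the max over $\Lambda(\ox,\ov)$, and invoke \cite[Theorem~13.24(c)]{rw} for the equivalence with quadratic growth. The only cosmetic difference is your extra caution about twice epi-differentiability being needed for the growth criterion, which is in fact not required there (positivity of $\d^2\psi(\ox,0)$ off the origin suffices), so this does not affect correctness.
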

\begin{proof}  To justify (i), note that since $\ox$ is  a local minimum of \eqref{comp}, it is a local minimum of $\psi=\ph+f$. 
Moreover, $-\nabla \ph(\ox)\in \sub f(\ox)$ amounts to $0\in \sub \psi(\ox)$. Thus, by definition, we arrive at $\d^2 \psi (\ox,0)(w)\ge 0$ for all $w\in \X$. Since $\ph$ is twice differentiable at $\ox$, we
obtain the following sum rule for the second subderivatives: 
\begin{equation}\label{sumr}
\d^2 \psi (\ox,0)(w)=\la \nabla^2 \ph(\ox)w,w\ra +\d^2 f(\ox,\ov)(w)\quad \mbox{for all}\;\; w\in \X.
\end{equation}
Combing these with the chain rule \eqref{chss} proves (i).

Turing now to (ii), we infer from \cite[Theorem~13.24(c)]{rw} that $\d^2 \psi (\ox,0)(w)> 0$ for all $w\in \X\setminus\{0\}$ amounts to the existence 
of some constants  $\ell>0$ and  $\ve > 0$ for which the second-order growth condition \eqref{grow} holds. Remember from \eqref{domss} and \eqref{sumr} that 
\begin{equation}\label{domps4}
\dom \d^2 \psi (\ox,0)=\dom \d^2 f (\ox,\ov)=K_f(\ox,\ov).
\end{equation}
Using these,  the chain rule \eqref{chss},  and  the sum rule \eqref{sumr} proves the claimed equivalence in (ii) and thus finishes the proof.
\end{proof}

\begin{Remark}[discussion on second-order optimality conditions] {\rm The second-order optimality conditions 
for composite problems were established in \cite[Theorems~3.108 \& 3.109]{bs} for \eqref{comp}
by expressing \eqref{comp} equivalently  as a constrained problem and then appealing to the theory of  second-order 
optimality conditions for  the latter class of problems. While not assuming that $g$ is locally Lipschitz continuous relative to its 
domain, theses results were established under condition \eqref{gf07} and the second-order regularity in the sense of \cite[Definition~3.93]{bs}
that are strictly stronger than condition \eqref{mscq1} and the parabolic regularity, respectively, we imposed in Theorem~\ref{sooc}.
Another major difference is that we require that  $g$ be parabolically epi-differentiable (assumption (H2)), which was 
not assumed in \cite{bs}. This  assumption plays an  important role in our developments and has two important consequences:
 1) it makes the parabolic subderivative be a {\em convex} function and help us obtain a precise formula for  the Fenchel conjugate  
 of the parabolic subderivative  in our framework; 2) it  allows to establish the equivalence between  \eqref{sscc} and the growth condition in Theorem~\ref{sooc}.
These facts were not achieved in \cite{bs}; indeed,   \cite[Theorem~ 3.109]{bs} was written in terms of the conjugate of  the parabolic subderivative
 and only states that  condition  \eqref{sscc} implies the growth condition therein.As discussed in Remark~\ref{remn}, if we replace condition \eqref{mscq1}  with the stronger condition \eqref{gf07}, the imposed Lipschitz 
continuity of $g$ can be relaxed in our developments. 
 It is worth mentioning that the imposed Lipschitz 
 continuity of $g$ relative to its domain, utilized in this paper, does not seem to be restrictive and allows us to provide an umbrella 
 under which second-order variational analysis for  composite problems can be carried out under condition \eqref{gf07} in the same level of perfection as those 
 for constrained problems. We believe that if we strengthen condition \eqref{gf07} to the metric subregularity of the epigraphical  mapping  $(x,\al)\mapsto (F(x),\al)-\epi g$, 
 the imposed  Lipschitz continuity of $g$ can be relaxed in our developments.

 Cominetti  \cite[Theorem~5.1]{c91} established second-order optimality conditions for the composite problem \eqref{comp} 
 similar to Theorem~\ref{sooc} without making a connection between \eqref{sscc} and the growth condition \eqref{grow}. As mentioned in our 
 discussion after Example~\ref{ex04}, the results in \cite{c91} were established  under condition \eqref{gf07} and a restrictive assumption on the second
 subderivative, which does not hold for important classes of composite problems. When we are in the framework of Remark~\ref{rem01}(a),
 Theorem~\ref{sooc} was first achieved by Rockafellar in \cite[Theorem~4.2]{r89} under condition \eqref{gf07} and was improved recently 
 in \cite[Theorem~6.2]{mms1} by replacing the latter condition with \eqref{mscq1}.  For the framework of Remark~\ref{rem01}(b), 
 the second-order optimality conditions from Theorem~\ref{sooc} were obtained in \cite[Theorem~4.2]{t2}. Finally, if we are in the framework of 
 Remark~\ref{rem01}(c), Theorem~\ref{sooc} covers our recent developments in \cite{mms2}. 
 }
\end{Remark}

We end this section by  obtaining a characterization of strong metric subregularity of the subgradient mapping 
of the objective function of the composite problem  \eqref{comp}. 
\begin{Theorem}[strong metric subregularity of the subgradient mappings in composite problems]\label{smse}
Let $\ox$ be a feasible solution to problem \eqref{comp} and let $f:=g\circ F$
and $\ov:=-\nabla \ph(\ox)\in \sub f(\ox)$  with $\ph$, $g$, and $F$ taken from \eqref{comp}.  Assume that the basic assumptions {\rm (H1)-(H3)} hold for $f$ at $(\ox,\ov)$
and that both $\ph$ and $F$ are ${\cal C}^2$-smooth around $\ox$. 
Then the following conditions are equivalent:
\begin{itemize}[noitemsep,topsep=1pt]
\item [\bf{(i)}] the point $\ox$ is a local minimizer for $\psi=\ph+f$  and the subgradient mapping $\sub \psi$ is strongly metrically subregular at $(\ox,0)$;
\item [\bf{(ii)}] the second-order sufficient condition \eqref{sscc} holds. 
\end{itemize}
\end{Theorem}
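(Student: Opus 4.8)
The plan is to exploit the general theory connecting strong metric subregularity of the subgradient mapping with positivity of the second subderivative, and then to convert that into condition~(ii) via the chain rule already available. Recall from \cite[Theorem~13.24]{rw} (or the characterization of strong metric subregularity of $\sub\psi$ via its graphical derivative in \eqref{sms8}) that, for a prox-regular and subdifferentially continuous function $\psi$, the conjunction ``$\ox$ is a local minimizer of $\psi$ and $\sub\psi$ is strongly metrically subregular at $(\ox,0)$'' is equivalent to the positivity condition $\d^2\psi(\ox,0)(w)>0$ for all $w\in\X\setminus\{0\}$. So the first step is to record that $\psi=\ph+f$ is prox-regular and subdifferentially continuous at $\ox$ for $0$: this follows because $\ph$ is ${\cal C}^2$ around $\ox$ and $f=g\circ F$ with $g$ convex l.s.c.\ and $F$ being ${\cal C}^2$, so $f$ is prox-regular and subdifferentially continuous (its proximal and limiting subdifferentials agree by Proposition~\ref{fsch}(ii), and the composite structure transfers these properties), and the sum with a ${\cal C}^2$ function preserves them.

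Next I would use Corollary~\ref{tedc} together with Theorem~\ref{chpr}: under the basic assumptions (H1)--(H3), $f$ is twice epi-differentiable at $\ox$ for $\ov$ and parabolically regular there, and the chain rule \eqref{chss} gives the exact formula for $\d^2 f(\ox,\ov)$. Invoking the sum rule \eqref{sumr}, namely $\d^2\psi(\ox,0)(w)=\la\nabla^2\ph(\ox)w,w\ra+\d^2 f(\ox,\ov)(w)$, and substituting \eqref{chss}, one obtains for every $w\in\X$
$$
\d^2\psi(\ox,0)(w)=\max_{y\in\Lambda(\ox,\ov)}\big\{\la\nabla_{xx}^2 L(\ox,y)w,w\ra+\d^2 g(F(\ox),y)(\nabla F(\ox)w)\big\},
$$
exactly as in the proof of Theorem~\ref{sooc}, using $\nabla_{xx}^2 L(\ox,y)=\nabla^2\ph(\ox)+\la\nabla^2 F(\ox)(\cdot,\cdot),y\ra$. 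Recalling from \eqref{domps4} that $\dom\d^2\psi(\ox,0)=K_f(\ox,\ov)$, the positivity condition $\d^2\psi(\ox,0)(w)>0$ for all $w\neq0$ is equivalent to requiring the displayed maximum to be $>0$ for all $w\in K_f(\ox,\ov)\setminus\{0\}$ (for $w\notin K_f(\ox,\ov)$ the value is $+\infty>0$ automatically, and at $w=0$ it is $0$), which is precisely condition \eqref{sscc}. Chaining the two equivalences closes the loop between (i) and (ii).

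The main obstacle, and the step deserving the most care, is the verification that $\psi$ (equivalently $f$) is prox-regular and subdifferentially continuous at $\ox$ for the relevant subgradient, since the clean characterization of strong metric subregularity of $\sub\psi$ via second subderivatives in \cite[Theorem~13.24]{rw} is stated under exactly those hypotheses; without them one must instead argue directly through the graphical derivative criterion \eqref{sms8} and the relation between $D(\sub\psi)$ and $\sub(\tfrac12\d^2\psi(\ox,0))$ from Corollary~\ref{pdpr}, which in turn needs proto-differentiability of $\sub\psi$ — itself a consequence of the twice epi-differentiability already granted by Corollary~\ref{tedc}. Either route works; I would prefer the prox-regularity route because $g$ convex gives subdifferential continuity of $g$ for free, $F\in{\cal C}^2$ transfers it to $f$, and adding the ${\cal C}^2$ function $\ph$ is harmless, so the hypotheses of \cite[Theorem~13.24]{rw} are genuinely met and the argument stays short.
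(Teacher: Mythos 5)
Your reduction of (ii) to the positivity condition $\d^2\psi(\ox,0)(w)>0$ for all $w\in\X\setminus\{0\}$ via the chain rule \eqref{chss}, the sum rule \eqref{sumr}, and the domain formula \eqref{domps4} is exactly right and matches the paper. The gap is in your main route for the other half of the argument: \cite[Theorem~13.24(c)]{rw} does \emph{not} say that positivity of $\d^2\psi(\ox,0)$ on nonzero vectors is equivalent to ``$\ox$ is a local minimizer and $\sub\psi$ is strongly metrically subregular at $(\ox,0)$''; it says it is equivalent to the quadratic growth condition \eqref{grow}. Quadratic growth and strong metric subregularity of the subgradient mapping are different properties, and the passage between them is precisely the nontrivial content of Theorem~\ref{smse}, so citing \cite[Theorem~13.24]{rw} for this equivalence is circular in effect. (The paper invokes \cite[Theorem~13.24(c)]{rw} only in Theorem~\ref{sooc}(ii), to obtain the growth condition, not subregularity.)

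Your fallback route, namely the graphical-derivative criterion \eqref{sms8} together with the identity $D(\sub\psi)(\ox,0)=\sub\big(\sm\d^2\psi(\ox,0)\big)$ from \eqref{gdpd}, is the one the paper actually follows, but as sketched it omits the two bridging steps that make it work. For (i)$\Rightarrow$(ii): if $\d^2\psi(\ox,0)\ge 0$ had a zero at some $\ow\ne 0$, then $\ow$ minimizes $\sm\d^2\psi(\ox,0)$, so Fermat's rule gives $0\in\sub\big(\sm\d^2\psi(\ox,0)\big)(\ow)=D(\sub\psi)(\ox,0)(\ow)$, contradicting \eqref{sms8}. For (ii)$\Rightarrow$(i): from $0\in D(\sub\psi)(\ox,0)(w)=\sub\big(\sm\d^2\psi(\ox,0)\big)(w)$ one must conclude $\d^2\psi(\ox,0)(w)=\la 0,w\ra=0$, which is a generalized Euler relation for the positively homogeneous (degree $2$) function $\d^2\psi(\ox,0)$; the paper imports this from \cite[Lemma~3.7]{chnt}. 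Neither step appears in your proposal. Finally, note that \eqref{gdpd} itself is valid only under prox-regularity and subdifferential continuity of $\psi$ (it comes from \cite[Theorem~13.40]{rw} via Corollary~\ref{pdpr}), so your suggestion that the graphical-derivative route avoids those hypotheses is not accurate; the paper establishes them for $\psi$ by arguing as in \cite[Proposition~7.1]{mms1}, and this is exactly where the ${\cal C}^2$-smoothness of $\ph$ and $F$ enters.
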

\begin{proof} We conclude from \eqref{sumr} and \eqref{domps4} that \eqref{sscc} amounts to the fulfillment of the condition 
\begin{equation}\label{posi}
\d^2 \psi(\ox, 0)(w)>0\quad \mbox{for all}\;\; w\in \X\setminus \{0\}.
\end{equation}
If (i) holds, we conclude from the local optimality of $\ox$ that $\d^2 \psi(\ox, 0)(w)\ge 0$ for all $w\in \X$. 
Since  (ii) is equivalent to \eqref{posi}, it suffices to show that there is no $w\in \X\setminus \{0\}$ such that $\d^2 \psi(\ox, 0)(w)= 0$.
Suppose on the contrary that there exists  $\ow\in \X\setminus\{0\}$ satisfying the latter condition. This means that $\ow$ is a minimizer for the problem 
\begin{equation*}
\mbox{minimize}\;\sm\d^2 \psi(\ox, 0)(w) \quad\mbox{subject to  }\;\;w\in \X.
\end{equation*}
Since    both $\ph$ and $F$ are ${\cal C}^2$-smooth around $\ox$, we 
can show using similar arguments as \cite[Proposition~7.1]{mms1} that $\psi$ is prox-regular and subdifferentially continuousat $\ox$ for $0$. 
This  together with the Fermat stationary principle and \eqref{gdpd} results in 
\begin{eqnarray}\label{ferm}
0\in \sub\big(\sm\d^2 \psi(\ox, 0)\big)(\ow)=D(\sub \psi)(\ox, 0)(\ow).
\end{eqnarray}
Since $\sub \psi$ is strongly metrically subregular at $(\ox,0)$, we deduce from \eqref{sms8} that $\ow=0$, a contradiction. This proves (ii).

To justify the opposite implication, assume that (ii) holds. According to Theorem~\ref{sooc}(ii), $\ox$ is a local minimizer for $\psi$. 
Pick now $w\in \X$ such that $0\in D(\sub \psi)(\ox, 0)(w) $. To obtain (i), we require by \eqref{sms8} to show that $w=0$.
Employing now \eqref{ferm} yields $0\in \sub\big(\sm\d^2 \psi(\ox, 0)\big)(w)$. This combined with  \cite[Lemma~3.7]{chnt} confirms that  $\d^2 \psi(\ox, 0)(w)=\la 0,w\ra=0$. 
Remember that  (ii) is equivalent to \eqref{posi}. Combining these  results in  $w=0$ and thus proves (i).
\end{proof}

The above result was first observed in \cite[Theorem~4G.1]{dr} for a subclass of nonlinear programming problems and was 
extended in \cite[Theorem~4.6]{chnt} for ${\cal C}^2$-cone reducible constrained optimization problems and  in \cite[Theorem~9.2]{mms2} 
for parabolically regular constrained optimization problems.  The theory of the twice epi-differentiability, obtained in this paper, 
provides an easy path to achieve a similar result for the composite problem \eqref{comp}. 

It is worth mentioning that similar characterizations as \cite[Theorem~4.2]{mms2}
can be achieved for the KKT system of \eqref{comp}. Furthermore,
Corollary~\ref{pdpr} provides a systematic method to calculate proto-derivatives of subgradient mappings of functions enjoying the composite form \eqref{CF},
a path we will pursue in our future research.

{\bf Acknowledgements.} The second author would like to thank  Asen Dontchev for 
bringing \cite[Theorem~4G.1]{dr} to his attention, which inspired the equivalence  obtained in  Theorem~\ref{smse}. Special thanks go to  Boris Mordukhovich for  numerous discussions on the subject of this paper and second-order variational analysis
 and for the insightful advice and comments. 
We also thank two anonymous reviewers for their  useful comments that allowed us to improve the original presentation. Reference \cite{mi} was brought to our attention by one of the referees that is highly appreciated.
\vspace*{-0.1in}


\small

\end{document}